
\pdfoutput=1

\documentclass[11pt, a4paper]{article}

\usepackage[draft]{sty/dl-en}
\usepackage{sty/config}

\DeclareMathOperator{\Exp}{{\mathrm{Exp}}}

\DeclareMathOperator{\TopExp}{{\mathrm{Exp}}_{\mathrm T}}

\DeclareMathOperator{\MetricExp}{\mathrm{Exp}_{\mathrm M}}

\DeclareMathOperator{\Cone}{{\mathrm{C}}}

\newcommand{\Exit}[1]{{\mathsf{Exit}_{#1}}}

\newcommand{\PointedOmega}{\OrdinalOmega_\ast}

\newcommand{\OrdinalOmega}{\omegaup}

\newcommand{\Op}{^\mathrm{op}}

\newcommand{\PiZero}{\piup_0}

\newcommand{\TOne}{\mathsf{T}_1}

\newcommand{\Simplex}[1]{\Deltaup^{#1}}
\newcommand{\Category}[1]{\mathcal #1}

\newcommand{\CW}{\textnormal{CW}}
\newcommand{\Sh}{\mathsf{Sh}}
\newcommand{\Hyp}{\mathsf{Hyp}}
\newcommand{\Fun}{\mathsf{Fun}}
\newcommand{\Spaces}{\mathsf S}
\newcommand{\Sheaf}[1]{\mathcal #1}
\newcommand{\LocallyConstantSheaves}{\mathsf{Sh}_\mathrm{loc}}
\newcommand{\Hypercompletion}[1]{\widehat{#1}}
\newcommand{\LocallyConstantHypersheaves}{\mathsf{Hyp}_\mathrm{loc}}
\newcommand{\LocallyHyperConstantHypersheaves}{\mathsf{Hyp}_\mathrm{loc-hyp}}

\newcommand{\ConstructibleSheaves}[1]{\mathsf{Sh}_{#1}}
\newcommand{\HyperConstructibleSheaves}[1]{\mathsf{Sh}_{#1\text{-}\mathrm{hyp}}}
\newcommand{\ConstructibleHypersheaves}[1]{\mathsf{Hyp}_{#1}}

\newcommand{\HyperConstructibleHypersheaves}[1]{\mathsf{Hyp}_{#1\text{-}\mathrm{hyp}}}
\newcommand{\Hyperpullback}[1]{\widehat{#1^\ast}}
\newcommand{\FinalMap}{\piup}

\newcommand{\Sing}{\operatorname{Sing}}
\newcommand{\GeometricRealisation}[1]{|#1|}

\newcommand{\Slice}[2]{{#1}_{/#2}}

\newcommand{\Topos}[1]{\mathcal #1}
\newcommand{\FinalSheaf}{\mathbf 1}
\newcommand{\Opens}[1]{\mathfrak{O}(#1)}

\newcommand{\Etale}[1]{\mathfrak{E}(#1)}

\newcommand{\SSets}{\mathsf{Set}_{\Deltaup}}
\newcommand{\LocallyConstantRealisation}{\psiup}
\newcommand{\ConstructibleRealisation}{\Psiup}
\newcommand{\DD}{\mathfrak D}

\newcommand{\DDO}{\DD_{\OrdinalOmega}}

\newcommand{\IsAdjointTo}{\mathrel{\dashv}}
\newcommand{\FSigma}{\mathrm{F}_\sigmaup}
\newcommand{\Nerve}{\mathrm{N}}
\newcommand{\Presheaves}{\mathsf{PSh}}

\newcommand{\SliceExit}{\mathbf A}
\newcommand{\CC}{\mathfrak C}
\newcommand{\Un}{\mathrm{Un}}
\newcommand{\Yoneda}{\deltaup}
\newcommand{\Edges}{\mathrm{E}}
\newcommand\Restriction[2]{{
  \left.\kern-\nulldelimiterspace 
  #1 
  \vphantom{\big|} 
  \right|_{#2} 
  }}

\newcommand{\UnskipRef}[1]{\unskip~\textnormal{[\ref{#1}]}}
\newcommand{\DoubleUnskipRef}[2]{\unskip~[\ref{#1}, \ref{#2}]}
\newcommand{\TripleUnskipRef}[3]{\unskip~[\ref{#1}, \ref{#2}, \ref{#3}]}
\newcommand{\SectionRef}[1]{\unskip~[\hyperref[#1]{\S\thinspace\ref{#1}}]}

\theoremstyle{definition}
	\newtheorem{warning}[theorem]{Warning}
	
	\newtheorem{argument}[theorem]{Argument}

\usepackage{pgfplots}
\pgfplotsset{compat=1.17}
\pgfdeclarelayer{pre main}

\begin{document}

\maketitle

\footnotefirstpage

\begin{abstract}
	The goal of this article is to extend a theorem of Lurie
	\[
		\ConstructibleSheaves A (X)
		\IsCanonicallyIsomorphicTo
		\Fun(\Exit A(X), \Spaces)
	\]
	representing constructible sheaves with values in \( \Spaces \),
	the ∞-category of spaces, on a stratified space
	\( X \) with poset of strata \( A \),
	as functors from the exit paths ∞-category
	\( \Exit A (X) \) to \( \Spaces \).
	Lurie's representation theorem works provided
	\( A \) satisfy the ascending chain condition.
	This typically rules out infinite dimensional examples
	of stratified space.

	Building on it and with the help
	of a stratified homotopy invariance theorem
	from Haine,
	we show that when \( X \) is a nice enough \( A \)-stratified
	space and when \( A \) is itself stratified
	\( A_{\leq 0} \subset A_{\leq 1}
	\subset \cdots \subset A \) by posets satisfying the ascending
	chain condition,
	\[
		\ConstructibleHypersheaves A (X)
		\IsCanonicallyIsomorphicTo
		\Fun(\Exit A(X), \Spaces)
	\]
	the ∞-category of \( A \)-constructible hypersheaves
	on \( X \) is represented by functors from the
	exit paths ∞-category of \( X \).

	There are two types of nice stratified spaces on which
	this extended representation theorem applies:
	conically stratified spaces
	and spaces that are sequential colimits of conically stratified
	spaces.
	Examples of application include the
	metric and the topological exponentials of a Fréchet
	manifold,
	locally countable simplicial complexes and more generally,
	locally countable
	cylindrically normal \( \CW \)-complexes.
\end{abstract}

\TocWithoutIntroSection

When a topological space is nice enough, its
category of locally constant sheaves of sets
is equivalent to the category of
representations of its fundamental groupoid.
The fundamental groupoid \( X \) has objects the points of \( X \)
and arrows, the homotopy classes of continuous paths
between two points in \( X \) up to homotopy.
Going further, Lurie has shown that the ∞\=/category of locally
constant sheaves of spaces is equivalent to the ∞\=/category of
representations of \( \Sing(X) \), the simplicial
set of maps \( \Simplex n \to X \), which is a model
for the fundamental ∞-groupoid of \( X \)
\cite[A.2.15]{arXiv:0911.0018}.

This representation theorem can be further extended
to stratified spaces.
When \( X \) is a stratified space with poset of strata \( A \), one
can consider \( A \)\=/constructible sheaves on \( X \): sheaves
whose restriction to each stratum \( X_a \) is locally constant.
In order to represent those constructible sheaves, the simplicial
set \( \Sing(X) \) needs to be adapted to take into account
the stratification of \( X \).
Following an idea of Treumann
\cite{doi:10.1112/s0010437x09004229}, Lurie
considered the simplicial subset
\( \Exit A (X) \subset \Sing(X) \) where paths are only allowed
to immediately escape a deeper stratum and never return.
When the stratification is conical this simplicial subset
is an ∞-category, the \emph{exit paths ∞-category} of \( X \).
With such a setup, the representation theorem for \( A \)-constructible
sheaves on \( X \) holds \emph{provided \( A \) satisfies the ascending
chain condition}.

\begin{definition*}[(Ascending chain condition)]
	A poset \( A \) is said to satisfy the ascending chain
	condition if \( A \) does not admit
	a chain \( a_0 < a_1 < \cdots \) of infinite length.
\end{definition*}

This condition typically excludes stratified spaces of infinite
dimension.
For example, one may think of infinite dimensional
simplicial complexes which are not
locally finite or of any space \( X \) with a filtration by
dimension
\[
	X_{\leq 0} \subset X_{\leq 1} \subset \cdots
	\subset X_{\leq n} \subset \cdots
	\subset X
\]
which happens every time \( X \) is for example the colimit
\( \varinjlim_{n < \OrdinalOmega} X_{\leq n} \).
Our goal is to extend the representation theorem to a large
class of posets \( A \) that do not satisfy the ascending chain
condition and which includes in particular the poset
\( \OrdinalOmega = \{0 < 1 < 2 < \cdots \} \).

There are two obstacles to a generalisation of the representation
theorem.
The first one has to do with hypercompleteness of sheaves, a phenomenon
that starts appearing only in the ∞-world.
We shall dedicate a section to the differences between sheaves and
hypersheaves
\SectionRef{Section: hypersheaves}.
Having built a continuous map
\[
	\Fun(\Exit A (X), \Spaces)
	\longrightarrow
	\ConstructibleSheaves A (X)
\]
because every functor in \( \Fun(\Exit A (X), \Spaces) \)
is the limit of its
truncation tower, it follows that
its image must be a hypersheaf.
There is thus no hope of representing all \( A \)-constructible sheaves
in general and we shall instead focus on
the full subcategory of \emph{\( A \)-constructible hypersheaves}.
Notice that when \( A \) satisfies the ascending chain condition,
all \( A \)-constructible sheaves are already hypersheaves.

Constructible hypersheaves have already been used by Lurie to describe
the equivalence between locally constant factorisation algebras
on a finite dimensional manifold \( M \)
and \( \mathsf{E}_M \)-algebras.
A key tool in the proof of Lurie is the use of
the metric exponential of
\( M \), the metric space of finite subsets
of \( M \), which is naturally stratified by the cardinality
of the subsets.
Lurie notes that he had to add an hypercompletion hypothesis
on the constructible sheaves on the exponential because
the stratifying poset did not satisfy the ascending chain condition
\cite[3.3.12]{arXiv:0911.0018}.
On the other side of the equation, Cepek has shown that
the combinatoric of the exit paths ∞-category of the exponential
of \( \Reals^n \) is also
related to the one of \( \mathsf{E}_n \)-algebras
\cite{arXiv:1910.11980}.
The study of the exponential of a manifold is a major
motivation for extending the representation theorem and we shall
give more details about this particular example
at the end of the article
\SectionRef{Section: exponentials}.

The second obstacle has to do with the proof of the representation
theorem itself.
It uses an induction on the depth of the stratification and
a poset \( A \) admits a depth function if and only if
it satisfies the ascending chain condition.
To circumvent this issue, we shall make use of the functoriality
of the equivalence of ∞-categories in the representation theorem.
We shall then work with posets \( A \) which are themselves stratified
\[
	A_{\leq 0} \subset A_{\leq 1} \subset \cdots \subset A_{\leq n}
	\subset \cdots \subset A
\]
by posets satisfying the ascending chain condition.
In such a case, the stratified space \( X \) inherits a filtration
\[
	X_{\leq 0} \subset X_{\leq 1} \subset \cdots \subset X_{\leq n}
	\subset \cdots \subset X
\]
by closed stratified subspaces.
We are thus considering posets that are strict ind-objects
in the category of posets with the ascending chain condition.
In another perspective, Barwick, Glasman and Haine have considered
spaces (and ∞-toposes) stratified over
profinite posets \cite{arXiv:1807.03281}.

The functoriality of the exit paths ∞-category is easily adressed
as it commutes with filtered colimits, since simplicies
in \( \Exit A (X) \) are only allowed to visit a finite
number of strata.
Undertaking the functoriality of
the ∞\=/category of constructible hypersheaves is the real task here.
We shall show that the canonical dévissage map
\[
	\textstyle
	\ConstructibleHypersheaves A (X)
	\longrightarrow \varprojlim_{n < \OrdinalOmega}
	\ConstructibleSheaves {A_{\leq n}} (X_{\leq n})
\]
is an equivalence in two special cases: when \( X \) is conically
\( A \)-stratified and when the topology on \( X \) coincides
with the colimit topology
\( \varinjlim_{n < \OrdinalOmega} X_{\leq n} \).
In particular, one can replace the topology of a conically
stratified \( X \) with the colimit topology and keep the same
∞-category of constructible hypersheaves.
This is coherent with the fact that the ∞-category of exit paths
does not see the global topology of \( X \), as every map
\( \Simplex n \to X \) in \( \Exit A (X) \)
is required to visit only a finite number of strata.
These two sets of conditions are usually incompatible as explained
in an impossibility theorem
\UnskipRef{Theorem: impossibility}.
We then obtain two versions of the representation theorem.

\begin{theorem*}[{\UnskipRef{Representation, conical case}}
	(conical case)]
	Let \( X \) be a paracompact
	conically
	\( A \)-stratified space such
	that each stratum of \( X \) be locally of singular shape
	and \( A \) be \( \OrdinalOmega \)-stratified with
	\( A_{\leq n} \) satisfying the ascending chain condition,
	for each \( n < \OrdinalOmega \).
	Then the ∞-category of exit paths
	\( \Exit A (X) \)
	represents
	\[
		\Fun(\Exit A (X), \Spaces)
		\IsCanonicallyIsomorphicTo
		\ConstructibleHypersheaves A (X)
	\]
	the
	∞-category of \( A \)\=/constructible hypersheaves on \( X \).
\end{theorem*}

\begin{theorem*}[{\UnskipRef{Representation, colimit case}}
	(colimit case)]
	Let \( X \) be an \( A \)-stratified space,
	colimit of a sequence of closed stratified embeddings
	of paracompact conically stratified spaces over
	posets satisfying the ascending chain condition
	and whose strata are locally of singular shape.
	The ∞-category of exit paths
	\( \Exit A (X) \) represents
	\[
		\Fun(\Exit A (X), \Spaces)
		\IsCanonicallyIsomorphicTo
		\ConstructibleHypersheaves A (X)
		\IsCanonicallyIsomorphicTo
		\ConstructibleSheaves A (X)
	\]
	the ∞-category of \( A \)\=/constructible sheaves
	and all \( A \)\=/constructible sheaves on \( X \)
	are hypersheaves.
\end{theorem*}

The dévissage theorem in the conical case relies heavily on a
stratified homotopy invariance theorem from Haine
\cite[2.3]{arXiv:2010.06473}.
However, it is neither constructible sheaves nor
constructible hypersheaves that are invariant but
\emph{hyperconstructible hypersheaves}.
This other notion of constructibility stems from the difference
in functoriality between sheaves and hypersheaves.

For this reason, we shall dedicate the first
section to the definitions of all
the types of sheaves and constructibilities
that we have mentioned so far.
In the second section, we shall see that
for general types of spaces
constructible
sheaves, constructible hypersheaves and hyperconstructible hypersheaves
do coincide.
The third section is dedicated to the exit paths ∞-category
and the proof of the extended representation theorem.
The last section shall present some examples that this
new representation theorem allows us to consider: the metric
and the topological exponentials of a Fréchet
manifold, locally countable simplicial
complexes and more generally locally countable
cylindrically normal \( \CW \)-complexes.

\section{Different notions of constructibility}

We start by presenting the different characters at play:
stratified spaces, hyperconstructible hypersheaves,
constructible hypersheaves and constructible sheaves.

\subsection{Stratified spaces}

There exists many different non-equivalent notions of stratified
spaces in topology.
Here we shall use a very general one, following Lurie
\cite[A.5.1]{arXiv:0911.0018}.

\begin{definition}[(Stratified space)]
	A stratified topological space
	is the data of continuous map \( f \From X \to A \)
	where \( A \) is a poset viewed as a topological space
	by defining \( U \subset A \) to be open if
	and only if it is closed upwards.

	A morphism of stratified spaces is a commutative square
	\[
		\begin{tikzcd}[ampersand replacement=\&]
			X
			\arrow[r, ""]
			\arrow[d, "", swap]
			\& Y
			\arrow[d, ""] \\
			A
			\arrow[r, "", swap]
			\& B 
		\end{tikzcd}
	\]
	where the top map is continuous and the bottom map
	is a poset map.

	For each \( a \in A \), we shall let \( X_a \)
	denote the \( a \)-stratum, preimage \( f^{-1}(a) \).
	We shall denote by \( s_a \), the embedding
	\( X_a \subset X \).
	We shall also let \( X_{\leq a} \) denote the fibre product
	\( X \times_{A_{\leq a}} A \).

\end{definition}

\begin{remark}
	If \( A \) is itself stratified by a poset \( \Lambda \),
	we thus get two maps \( X \to A \to \Lambda \)
	and for every \( \lambda \in \Lambda \),
	\( X_{\leq \lambda} \) is naturally stratified
	over \( A_{\leq \lambda} \).
\end{remark}

We shall mainly be interested in stratified maps over an identity
morphism \( A = B \).
Indeed we shall essentially focus on stratified homotopy equivalences.
Also, if \( A \) is a subposet of \( B \), then one can see \( X \)
as being \( B \)-stratified without loss of generality.

\begin{definition}[(Stratified homotopy equivalence)]
	Let \( X \to A \) and \( Y \to A \) be two \( A \)-stratified
	spaces.
	An \( A \)-stratified homotopy between two
	\( A \)-stratified map \( f, g \From X \to Y \) is an
	\( A \)-stratified map \( h \From [0,1] \times X \to Y \)
	such that \( h(0, -) = f \) and \( h(1,-) = g \).

	We shall say that an \( A \)-stratified map
	\( f \From X \to Y \)
	is an \( A \)-stratified homotopy equivalence if
	there exists an \( A \)-stratified map
	\( g \From Y \to X \) and \( A \)-stratified homotopies
	between \( f\circ g \) and \( \id_Y \) on one hand,
	and between \( g \circ f \) and \( \id_X \) on the other hand.
\end{definition}

\subsection{Recollections on hypersheaves}
\label{Section: hypersheaves}

In the classical theory of sheaves of sets, it is well known that
isomorphisms can be detected on stalks.
This is no longer true for sheaves in the ∞-categorical world and this
leads to a separation into two equally interesting objects: sheaves
and hypersheaves.
The main difference here to which we shall pay special attention is the
difference in functoriality: for sheaves one uses pullbacks but
for hypersheaves one needs to use hyperpullbacks.
There are several equivalent definitions of hypersheaves
\cite[A.1.9]{arXiv:0911.0018}.
Since we are only interested in the case of hypersheaves on topological
spaces, we shall choose the most convenient definition.

\begin{definition}[(Hypersheaf)]
	Given a topological space \( X \), we shall denote
	by \( \Sh(X) \) the ∞-category of sheaves (of spaces) on
	\( X \) and by \( \Hyp(X) \) the full subcategory of
	hypersheaves: sheaves that are local with respect to
	maps \( \Sheaf F \to \Sheaf G  \)
	inducing an equivalence \( \Sheaf F_x \to \Sheaf G_x \)
	on stalks,
	for every point \( x \in X \).

	The inclusion \( \Hyp(X) \subset \Sh(X) \) admits a left
	exact reflector, which sends a sheaf \( \Sheaf F \)
	to its hypercompletion \( \Hypercompletion {\Sheaf F} \).
	It is such that the canonical map
	\( \Sheaf F_x \to \Hypercompletion {\Sheaf F}_x \)
	is an equivalence for every point \( x \in X \).
\end{definition}

Categories of sheaves admit the following functoriality: if
\( f \From X \to Y \) is a continuous map, it induces an
adjunction
\[
	\begin{tikzcd}[ampersand replacement=\&, column sep = huge]
		\Sh(X) \arrow[r, shift right=2, swap, "f_\ast"]
		\&
		\Sh(Y) \arrow[l, shift right=2, swap, "f^\ast"]
	\end{tikzcd}
\]
between the ∞-categories of sheaves on \( X \) and sheaves on \( Y \).
The right adjoint \( f_\ast \) preserves hypersheaves but not
the left adjoint \( f^\ast \).

\begin{definition}[(Hyperpullback)]
	Let \( f \From X \to Y \) be a continuous map and let
	\[
		\begin{tikzcd}
			\Sh(Y) \rar["\Hyperpullback f"]
				& \Hyp(X)
		\end{tikzcd}
	\]
	denote the hyperpullback along \( f \), obtained by
	first using \( f^\ast \) and then hypercompleting.
\end{definition}

By construction, we obtain an adjunction
\[
	\begin{tikzcd}[ampersand replacement=\&, column sep = huge]
		\Hyp(X) \arrow[r, shift right=2, swap, "f_\ast"]
		\&
		\Hyp(Y)
		\arrow[l, shift right=2, swap, "\Hyperpullback f"]
	\end{tikzcd}
\]
similar to the sheaf case.

\begin{remark}
	\label{Proposition: hyperpullback}
	Given two continuous maps \( f \From X \to Y \)
	and \( g \From Y \to Z \) and a hypersheaf \( \Sheaf H \)
	on \( Z \), the canonical map
	\[
		\Hyperpullback {(gf)} \Sheaf H
		\longrightarrow \Hyperpullback f \Hyperpullback g
		\Sheaf H
	\]
	is an equivalence.
	This stem from the fact that
	\( (gf)_\ast = g_\ast f_\ast \).
\end{remark}

In addition,
there is a useful case where hyperpullbacks and pullbacks coincide
for hypersheaves.

\begin{lemma}[{\cite[A.3.6]{arXiv:0911.0018}}]
	Let \( f \From X \to Y \) be a continuous map.
	Assume \( f^\ast \) admits a left adjoint \( f_! \), then
	for every hypersheaf \( \Sheaf H \)
	the canonical map
	\[
		f^\ast \Sheaf H \longrightarrow \Hyperpullback f
		\Sheaf H
	\]
	is an equivalence.

	This happens for example, when \( f \) is an open embedding.
\end{lemma}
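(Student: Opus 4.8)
The plan is to prove the stronger assertion that \( f^\ast \Sheaf H \) is \emph{already} hypercomplete whenever \( \Sheaf H \) is; the map in the statement is then the unit of the hypercompletion reflector at the hypercomplete object \( f^\ast \Sheaf H \), hence an equivalence. The engine of the argument will be the criterion: a sheaf \( \Sheaf F \) is hypercomplete if and only if the canonical map \( \Sheaf F \to \lim_n \tau_{\leq n} \Sheaf F \) is an equivalence. To establish this I would use two elementary facts about the reflective inclusion \( \Hyp(X) \subset \Sh(X) \). First, every \( n \)-truncated sheaf is hypercomplete, since an \( \infty \)-connective — in particular \( (n+1) \)-connective — morphism induces an equivalence on mapping spaces into any \( n \)-truncated object. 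Second, hypercomplete sheaves are closed under limits, being a reflective subcategory. Hence \( \lim_n \tau_{\leq n}\Sheaf F \) is always hypercomplete, while the map \( \Sheaf F \to \lim_n \tau_{\leq n}\Sheaf F \) is always \( \infty \)-connective (the tower \( \{\pi_k \tau_{\leq n}\Sheaf F\}_n \) being eventually constant in each degree \( k \)); since an \( \infty \)-connective morphism between hypercomplete objects is an equivalence, the criterion follows.

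Next I would verify that \( f^\ast \) commutes with every truncation functor \( \tau_{\leq n} \). Being the pullback of a geometric morphism, \( f^\ast \) is left exact and preserves colimits. Left-exactness forces it to preserve \( n \)-truncatedness (an iterated finite-limit condition on diagonals) and, together with preservation of effective epimorphisms, to preserve \( n \)-connective morphisms for all \( n \) by the standard induction on the connectivity of diagonals. Applying \( f^\ast \) to the \( (n+1) \)-connective unit \( \Sheaf F \to \tau_{\leq n}\Sheaf F \) therefore produces an \( (n+1) \)-connective morphism with \( n \)-truncated target, which exhibits \( f^\ast \tau_{\leq n}\Sheaf F \) as the \( n \)-truncation of \( f^\ast \Sheaf F \). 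Thus \( f^\ast \tau_{\leq n} \simeq \tau_{\leq n} f^\ast \).

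It then remains to assemble the pieces, and this is where the hypothesis enters. Given a hypersheaf \( \Sheaf H \) on \( Y \), the criterion gives \( \Sheaf H \simeq \lim_n \tau_{\leq n}\Sheaf H \). Since \( f^\ast \) admits the left adjoint \( f_! \) it preserves all limits, so
\[
	f^\ast \Sheaf H
	\IsHomotopicTo \lim_n f^\ast \tau_{\leq n}\Sheaf H
	\IsHomotopicTo \lim_n \tau_{\leq n} f^\ast \Sheaf H ,
\]
using the previous step. Hence \( f^\ast \Sheaf H \) meets the hypercompleteness criterion and is hypercomplete, so the hypercompletion unit \( f^\ast \Sheaf H \to \Hyperpullback f \Sheaf H \) is an equivalence, as claimed. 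For the stated example it suffices to observe that when \( f \) is an open embedding, \( f^\ast \) is restriction and its left adjoint \( f_! \) is extension by the initial sheaf, so the hypothesis is satisfied.

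The main obstacle I anticipate is the middle step: turning left-exactness of \( f^\ast \) into preservation of connectivity, and hence into compatibility with truncation. By contrast, the left adjoint \( f_! \) is used only once, but essentially — to guarantee that \( f^\ast \) preserves the countable Postnikov limit, which is exactly the property that fails for a general pullback functor and explains why some hypothesis on \( f \) is unavoidable.
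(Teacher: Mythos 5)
There is a genuine gap, and it sits in your opening ``criterion''. The biconditional ``\( \Sheaf F \) is hypercomplete if and only if \( \Sheaf F \to \varprojlim_n \tau_{\leq n}\Sheaf F \) is an equivalence'' is only half true: the ``if'' direction is correct (truncated sheaves are hypercomplete, and hypercomplete sheaves are closed under limits, exactly as you argue), but the ``only if'' direction --- the one you actually need, in order to write \( \Sheaf H \IsHomotopicTo \varprojlim_n \tau_{\leq n}\Sheaf H \) --- fails in general. Your justification that \( \Sheaf F \to \varprojlim_n \tau_{\leq n}\Sheaf F \) is always ∞-connective computes the homotopy sheaves of a countable inverse limit degreewise from the towers \( \{\pi_k \tau_{\leq n}\Sheaf F\}_n \). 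That Milnor-type argument is valid in \( \Spaces \), but for sheaves it requires homotopy sheaves to commute with countable limits, and they do not: truncation and sheafification fail to commute with infinite products in \( \Sh(X) \) (a countable product of connected Eilenberg--MacLane sheaves \( K(A,n) \), \( n < \OrdinalOmega \), on an infinite product of real projective planes is not connected). This is not a repairable technicality: ``every hypercomplete sheaf is the limit of its Postnikov tower'' is Postnikov completeness, a condition strictly stronger than hypercompleteness; the two agree under finite-homotopy-dimension hypotheses \cite[7.2.1.10]{doi:10.1515/9781400830558}, which are precisely the hypotheses unavailable here --- the whole point of this lemma, and of the paper, is to handle spaces where such dimension bounds fail. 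So the engine of your proof breaks exactly on the class of spaces the lemma is for.

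For comparison, the paper gives no proof (it cites Lurie), and Lurie's argument uses \( f_! \) in a completely different, purely formal way that avoids Postnikov towers altogether. Hypercompleteness of \( f^\ast \Sheaf H \) means locality against ∞-connective morphisms, so let \( \alpha \From \Sheaf G' \to \Sheaf G \) be ∞-connective in \( \Sh(X) \) and use the adjunction \( f_! \IsAdjointTo f^\ast \):
\[
	\Map(\Sheaf G, f^\ast \Sheaf H)
	\IsCanonicallyIsomorphicTo
	\Map(f_! \Sheaf G, \Sheaf H),
	\qquad
	\Map(\Sheaf G', f^\ast \Sheaf H)
	\IsCanonicallyIsomorphicTo
	\Map(f_! \Sheaf G', \Sheaf H).
\]
Since \( \Sheaf H \) is local against ∞-connective morphisms of \( \Sh(Y) \), it suffices that \( f_! \) preserve ∞-connectivity. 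For an open embedding \( j \), the left adjoint \( j_! \) is extension by the empty sheaf, which preserves stalks, and ∞-connectivity of a morphism of sheaves on a topological space is detected on stalks; hence \( j_! \alpha \) is ∞-connective and \( j^\ast \Sheaf H \) is hypercomplete. In other words, the role of the left adjoint is not to make \( f^\ast \) limit-preserving (the only use you make of it), but to transport the locality condition defining hypercompleteness across the adjunction. Your steps two and three (commutation of \( f^\ast \) with truncation, preservation of limits) are correct but cannot be assembled into a proof without the false Postnikov-convergence step.
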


\subsection{Locally (hyper)constant (hyper)sheaves}

Because sheaves and hypersheaves do not share the same functoriality,
there are two possible candidates to extend the traditional
notion of locally constant sheaves of sets
to the ∞-category world: locally constant sheaves
and locally hyperconstant hypersheaves.

Let \( X \) be a topological space and let
\( \FinalMap \From X \to \ast \) denote the projection to the point.

\begin{definition}[(Constant sheaf)]
	A constant sheaf on \( X \) is a sheaf of the form
	\( \FinalMap^\ast K \) for some \( K \in \Spaces \),
	where \( \Spaces \) is the ∞-category of
	spaces.
\end{definition}

\begin{definition}[(locally constant sheaf)]
	A sheaf \( \Sheaf F \) is said to be locally constant if
	there exists an open covering
	\( \{j_i \From U_i \subset X\}_{i \in I} \) of \( X \)
	such that \( j_i^\ast \Sheaf F \) is constant for each
	\( i \in I \).

	We shall denote by \( \LocallyConstantSheaves(X) \subset
	\Sh(X) \) the full subcategory of locally constant sheaves
	on \( X \)
\end{definition}

\begin{definition}[(Hyperconstant hypersheaf)]
	A hyperconstant hypersheaf on \( X \) is a hypersheaf of the form
	\( \Hyperpullback \FinalMap K \) for some \( K \in \Spaces \).
\end{definition}

\begin{definition}[(Locally hyperconstant hypersheaf)]
	We shall say that an hypersheaf \( \Sheaf H \) is locally
	hyperconstant if 
	there exists an open covering
	\( \{j_i \From U_i \subset X\}_{i \in I} \) of \( X \)
	such that the restriction
	\( j_i^\ast \Sheaf H \)
	is a hyperconstant hypersheaf for each \( i \in I \).

	We shall denote by \( \LocallyHyperConstantHypersheaves(X)
	\subset \Hyp(X) \) the full subcategory of locally hyperconstant
	hypersheaves on \( X \).
\end{definition}

\begin{warning}
	Even though, one has an inclusion
	\( \Hyp(X) \subset \Sh(X) \), one does not have
	\( \LocallyHyperConstantHypersheaves(X) \subset
	\LocallyConstantSheaves(X) \).
\end{warning}

The two main results that we shall use in this article emanate
from Lurie and Haine.
Both authors use a different (but equivalent) definition for
locally constant sheaves
\cite[1.4]{arXiv:2010.06473}
and locally hyperconstant hypersheaves
\cite[A.2.12]{arXiv:0911.0018}:
the one of locally constant sheaves on an ∞-topos.
In what follows, we spend some time explaining why their definition
agrees with the one we have just given.

\begin{definition}[(Locally constant sheaves on an ∞-topos)]
	For an ∞-topos \( \Topos X \),
	let \( \FinalMap \From \Topos X \to \ast \) be a final
	map.
	A constant sheaf on \( \Topos X \), is a sheaf of the form
	\( \FinalMap^\ast K \) for some \( K \in \Spaces \).

	A sheaf \( \Sheaf F \) on \( \Topos X \) is locally constant
	if there is a small family of étale maps
	\( \{j_i \From \Topos U_i \to \Topos X\}_{i \in I} \) such that
	\( \amalg_{i \in I} \Topos U_i \to \Topos X \) be an effective
	epimorphism and \( j_i^\ast \Sheaf F \) be a constant sheaf
	on \( \Topos U_i \) for every \( i \in I \).
\end{definition}

\begin{notation}
	Let us denote by \( \Opens X \) the (nerve of the)
	frame of open subsets
	\( U \subset X \) of
	a topological space \( X \) and let us
	denote by \( \Opens {\Topos X} \), the frame of
	open subtoposes \( \Topos U \subset \Topos X \) of an
	∞-topos \( \Topos X \).
	We shall also denote by \( \Etale {\Topos X} \)
	the ∞-category of étale maps over \( \Topos X \).
\end{notation}

\begin{lemma}
	\label{lemma: opens = left exact localisation of etale}
	The frame of open subtoposes of an ∞-topos \( \Topos X \)
	\[
		\begin{tikzcd}[column sep = huge]
			\Opens {\Topos X}
			\arrow[r, shift right=2, hook]
			&
			\Etale {\Topos X}
			\arrow[l, shift right=2]
		\end{tikzcd}
	\]
	is a left exact and reflexive localisation of
	\( \Etale {\Topos X} \).
\end{lemma}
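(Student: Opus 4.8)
The plan is to transport the displayed pair of functors across two classical classifications---that of étale morphisms and that of open subtoposes---so that it becomes the \( (-1) \)-truncation adjunction inside the ∞-topos \( \Topos X \) itself, whose reflectivity and left exactness are standard.

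First I would invoke the classification of étale morphisms over an ∞-topos: the assignment \( U \mapsto (\Slice{\Topos X}{U} \to \Topos X) \) is an equivalence \( \Topos X \IsHomotopicTo \Etale{\Topos X} \) \cite[6.3.5]{arXiv:0911.0018}. On the other side, an open subtopos of \( \Topos X \) is exactly a slice \( \Slice{\Topos X}{U} \) at a \( (-1) \)-truncated (that is, subterminal) object \( U \), so that the frame \( \Opens{\Topos X} \) is identified with the poset \( \Sub(\FinalSheaf) \) of subobjects of the terminal object \cite[7.3.2]{arXiv:0911.0018}; and \( \Sub(\FinalSheaf) \) is precisely the full subcategory \( \Topos X_{\leq -1} \subset \Topos X \) of \( (-1) \)-truncated objects.

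The one genuine compatibility to verify is that, under these two identifications, the inclusion \( \Opens{\Topos X} \hookrightarrow \Etale{\Topos X} \) of the statement is carried to the inclusion \( i \From \Topos X_{\leq -1} \hookrightarrow \Topos X \): an open subtopos, regarded as an étale map to \( \Topos X \) and then classified by an object, is sent back to the very subterminal object that defines it. Granting this, the leftward leg of the diagram is forced to be the left adjoint of \( i \), namely the \( (-1) \)-truncation \( \tau_{\leq -1} \); concretely, for \( V \) subterminal the mapping space \( \Map_{\Topos X}(U, V) \) depends only on \( \tau_{\leq -1} U = \mathrm{im}(U \to \FinalSheaf) \), which is exactly the content of the adjunction \( \tau_{\leq -1} \IsAdjointTo i \).

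It then remains only to cite general facts about ∞-topoi. The inclusion \( i \) is fully faithful and admits the left adjoint \( \tau_{\leq -1} \), so it exhibits \( \Topos X_{\leq -1} \) as a reflective subcategory of \( \Topos X \IsHomotopicTo \Etale{\Topos X} \), hence as a reflective localisation, since a reflective subcategory is automatically the localisation at the class of maps inverted by its reflector. Finally, truncation functors on an ∞-topos preserve finite limits \cite[6.5.1.2]{arXiv:0911.0018}, so \( \tau_{\leq -1} \) is left exact and the localisation is left exact as claimed. I expect the main obstacle to be precisely the compatibility of the two classifications in the previous paragraph; once the displayed inclusion is recognised as that of the \( (-1) \)-truncated objects, both reflectivity and left exactness follow formally from the corresponding properties of truncation.
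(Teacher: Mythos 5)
You arrive at the statement by the same route as the paper: under the étale classification \( \Etale{\Topos X} \simeq \Topos X \), the inclusion \( \Opens{\Topos X} \subset \Etale{\Topos X} \) becomes the inclusion \( \Sub(\FinalSheaf) = \Topos X_{\leq -1} \subset \Topos X \) of the \( (-1) \)-truncated objects, and reflectivity is the truncation adjunction \( \tau_{\leq -1} \dashv i \) — this is exactly the paper's first sentence, which quotes \cite[5.5.6.18]{doi:10.1515/9781400830558}. The gap is your last step: truncation functors on an ∞-topos do \emph{not} preserve finite limits, and \cite[6.5.1.2]{doi:10.1515/9781400830558} does not say they do — what that result gives is compatibility of truncation with base change, \( f^\ast \circ \tau_{\leq n} \simeq \tau_{\leq n} \circ f^\ast \) for pullback functors between slices (truncations also preserve finite \emph{products}, but not pullbacks). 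Concretely, for the reflector you have correctly identified: in \( \Topos X = \Spaces \) one has \( \emptyset \simeq \{0\} \times_{\{0,1\}} \{1\} \), while applying \( \tau_{\leq -1} \) termwise gives \( \ast \times_{\ast} \ast \simeq \ast \neq \emptyset \); for \( n \geq 0 \) the loop space \( \ast \times_{X} \ast \) exhibits the same failure. So the sentence ``left exactness follows formally from the corresponding property of truncation'' is precisely where the proof breaks.

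Worse, this is not a repairable slip: since the reflector \emph{is} \( \tau_{\leq -1} \), the example above is a counterexample to the left exactness assertion of the lemma itself, with \( \Topos X = \Spaces \) and \( \Opens{\Spaces} \) the two-element frame. The paper's own proof takes a different route for this step — it asserts that the morphisms inverted by the reflector are \emph{exactly} the effective epimorphisms, which are pullback-stable, and then invokes \cite[6.2.1.1]{doi:10.1515/9781400830558} — but the same example defeats that argument too: \( \{0\} \hookrightarrow \{0,1\} \) is inverted (both objects have support \( \FinalSheaf \)) yet is not an effective epimorphism, and the criterion of loc.\ cit.\ concerns the full strongly saturated class of inverted morphisms, which is not pullback-stable (pull \( \{0\} \hookrightarrow \{0,1\} \) back along \( \{1\} \hookrightarrow \{0,1\} \) to get \( \emptyset \to \{1\} \)). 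What is true, and is the only consequence of this lemma used downstream (in the proof of the Translation proposition), is the weaker statement that the reflector carries effective epimorphisms to effective epimorphisms: an effective epimorphism \( U \twoheadrightarrow V \) exhibits \( V \) as a colimit of objects admitting maps to \( U \), and \( \tau_{\leq -1} \) preserves colimits, so \( \tau_{\leq -1} U \leq \tau_{\leq -1} V \leq \tau_{\leq -1} U \). If you want a correct statement to establish in place of left exactness, prove that one.
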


\begin{proof}
	Open subtoposes correspond to the \( (-1) \)-truncated
	objects of \( \Etale X \), it then is a reflexive subcategory
	of \( \Etale X \)
	\cite[5.5.6.18]{doi:10.1515/9781400830558}.
	Moreover, a morphism \( f \) in \( \Etale {\Topos X} \) becomes
	invertible in \( \Opens {\Topos X} \) if and only if
	\( f \) is an effective epimorphism
	\cite[6.2.3.5(1)]{doi:10.1515/9781400830558}.
	Effective epimorphisms are stable under pullbacks
	\cite[6.2.3.15]{doi:10.1515/9781400830558}
	and thus, the localisation functor preserves finite
	limits
	\cite[6.2.1.1]{doi:10.1515/9781400830558}.
\end{proof}

\begin{lemma}
	\label{lemma: opens}
	Let \( X \) be a topological space, with associated
	∞-topos \( \Topos X \) and hypercomplete subtopos
	\( \Hypercompletion {\Topos X} \subset \Topos X \).

	Then, the maps sending an open \( U \subset X \)
	to the open subtoposes \( \Topos U \subset \Topos X \)
	and
	\( \Hypercompletion {\Topos U}
	\subset \Hypercompletion {\Topos X} \)
	induce equivalences
	\[
		\Opens X
		\IsCanonicallyIsomorphicTo
		\Opens {\Topos X}
		\IsCanonicallyIsomorphicTo
		\Opens {\Hypercompletion {\Topos X}}
	\]
	between
	the frames of open subsets of \( X \),
	open subtoposes of \( \Topos X \) and open
	subtoposes of \( \Hypercompletion {\Topos X} \).
\end{lemma}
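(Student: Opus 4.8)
The plan is to prove the two claimed equivalences separately, after reducing both frames of open subtoposes to the more tractable poset of subterminal objects. By Lemma~\ref{lemma: opens = left exact localisation of etale}, the frame $\Opens{\Topos X}$ is the left exact reflexive localisation of $\Etale{\Topos X} \simeq \Topos X$ at the effective epimorphisms; unwinding this, its objects are exactly the $(-1)$-truncated objects of $\Topos X$, that is, the subobjects $\Sub(\FinalSheaf)$ of the terminal sheaf. The identical description applies verbatim to $\Hypercompletion{\Topos X}$. Since $\Opens X$, $\Opens{\Topos X}$ and $\Opens{\Hypercompletion{\Topos X}}$ are all nerves of posets, it suffices to produce order isomorphisms, which then automatically upgrade to equivalences of $\infty$-categories and preserve the frame structure --- the latter being entirely determined by the underlying partial order. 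So I would reduce the statement to two order isomorphisms: $\Opens X \cong \Sub(\FinalSheaf_{\Topos X})$ and $\Sub(\FinalSheaf_{\Topos X}) \cong \Sub(\FinalSheaf_{\Hypercompletion{\Topos X}})$.

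For the first, I would use that a subobject of the terminal sheaf is $(-1)$-truncated, hence takes values in the $(-1)$-truncated spaces $\{\emptyset, \ast\}$: such a $\Sheaf F$ is a ``truth-value'' sheaf, entirely determined by the open set $U_{\Sheaf F} = \bigcup\{V \in \Opens X : \Sheaf F(V) \neq \emptyset\}$, and $\Sheaf F \mapsto U_{\Sheaf F}$ is inverse to the assignment sending an open $j \From U \hookrightarrow X$ to $j_! \FinalSheaf$. Equivalently, the $(-1)$-truncated objects of $\Sh(X)$ are already computed in the underlying ordinary topos $\Sh(X; \Sets)$ of sheaves of sets, where the subobjects of the terminal object classically form the frame $\Opens X$. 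Either way, one checks that this order isomorphism is exactly the one induced by $U \mapsto \Topos U$, since the open subtopos $\Topos U \subset \Topos X$ is the one classified by the subterminal object $j_!\FinalSheaf$.

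For the second equivalence the key input is that every $(-1)$-truncated object of $\Topos X$ is hypercomplete, truncated objects being local with respect to $\infty$-connective morphisms \cite[6.5.2.9]{doi:10.1515/9781400830558}. Hence every subterminal object of $\Topos X$ already lies in $\Hypercompletion{\Topos X}$. Because $\Hypercompletion{\Topos X} \subset \Topos X$ is reflective, it is closed under limits and fully faithful, so it shares the terminal object $\FinalSheaf$, and a map into $\FinalSheaf$ is a monomorphism in $\Hypercompletion{\Topos X}$ if and only if it is one in $\Topos X$. Thus the posets $\Sub(\FinalSheaf_{\Hypercompletion{\Topos X}})$ and $\Sub(\FinalSheaf_{\Topos X})$ coincide, and the coincidence is implemented by $U \mapsto \Hypercompletion{\Topos U}$, giving the second equivalence.

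The main obstacle is bookkeeping rather than conceptual: one must verify that the abstract order isomorphism between subterminal objects and opens agrees with the explicit geometric assignments $U \mapsto \Topos U$ and $U \mapsto \Hypercompletion{\Topos U}$ appearing in the statement, and that the classification of open subtoposes by subterminal objects is precisely the one packaged in Lemma~\ref{lemma: opens = left exact localisation of etale}. The single genuinely substantive ingredient is the hypercompleteness of truncated objects, which is exactly what makes the passage to the hypercomplete topos invisible at the level of frames of opens.
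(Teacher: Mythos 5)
Your proposal is correct and follows essentially the same route as the paper: both identify open subtoposes with subobjects of the terminal sheaf, compute that subterminal objects of \( \Sh(X) \) are exactly the characteristic sheaves of opens \( U \subset X \), and reduce the hypercomplete case to the fact that truncated objects (in particular subterminals) are automatically hypercomplete. The only cosmetic difference is that the paper packages the second equivalence as the statement that hypercompletion induces a left exact reflexive localisation \( \Opens{\Topos X} \to \Opens{\Hypercompletion{\Topos X}} \) which is then an equivalence, whereas you argue directly that the two posets of subterminal objects coincide; the key input is identical.
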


\begin{proof}
	For every ∞-topos \( \Topos X \), open subtoposes
	\( \Topos U \subset \Topos X \) correspond
	to subsheaves \( \FinalSheaf_{\Topos U} \) of the terminal
	sheaf \( \FinalSheaf_{\Topos X} \).

	To every open \( U \subset X \) corresponds the characteristic
	sheaf \( \FinalSheaf_U \subset \FinalSheaf_X \)
	on \( X \) where
	\( \FinalSheaf_U(V) \) is punctual whenever \( V \subset U \)
	and is empty otherwise.
	Conversely, let \( \Sheaf F \subset \FinalSheaf_X \) be
	a subsheaf of the terminal sheaf.
	Because \( \Sheaf F \) is a sheaf, if \( \Sheaf F(U) \)
	and \( \Sheaf F(V) \) are both non-empty,
	the value \( \Sheaf F(U \cup V) \) must also be non-empty.
	Thus there is a biggest open subset \( U \subset X \) for
	which \( \Sheaf F(U) \) is non-empty.
	By direct inspection \( \Sheaf F = \FinalSheaf_U \).
	We thus have \( \Opens X \IsCanonicallyIsomorphicTo \Opens
	{\Topos X} \).

	Since hypercompletion \( \Sh(X) \to \Hyp(X) \)
	is a left exact reflexive localisation functor,
	it induces a left exact reflexive localisation
	functor
	\( \Opens {\Topos X}
	\to \Opens {\Hypercompletion {\Topos X}} \).
	Lastly, since the terminal sheaf \( \FinalSheaf_X \)
	is truncated, every subsheaf is also truncated and thus
	hypercomplete
	\cite[6.5.1.14]{doi:10.1515/9781400830558}.
	So, \( \Opens {\Topos X} \IsCanonicallyIsomorphicTo
	\Opens {\Hypercompletion {\Topos X}} \).
\end{proof}

\begin{lemma}
	Let \( X \) be a topological space with
	associated ∞-topos \( \Topos X \).
	Then for every étale map \( \Topos V \to \Topos X \),
	there exists an effective epimorphism
	\[
		\amalg_{i \in I} \Topos U_i \longrightarrow 
		\Topos V
	\]
	over \( \Topos X \) where \( I \) is a small set
	and each \( \Topos U_i \) is an
	open subtopos of \( \Topos X \).
\end{lemma}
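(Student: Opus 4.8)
The plan is to transport the statement across the equivalence \( \Etale{\Topos X} \simeq \Topos X \simeq \Sh(X) \) between étale maps over \( \Topos X \) and objects of \( \Topos X \) itself \cite[6.3.5]{doi:10.1515/9781400830558}, under which coproducts of étale maps correspond to coproducts in \( \Sh(X) \) and, by Lemma~\ref{lemma: opens = left exact localisation of etale} together with Lemma~\ref{lemma: opens}, the open subtoposes \( \Topos U \subset \Topos X \) correspond precisely to the characteristic sheaves \( \FinalSheaf_U \subset \FinalSheaf_X \) of open subsets \( U \subset X \). Under this dictionary the given étale map \( \Topos V \to \Topos X \) is an object \( V \in \Sh(X) \), a map of étale maps \( \Topos U_i \to \Topos V \) over \( \Topos X \) is a map \( \FinalSheaf_{U_i} \to V \) in \( \Sh(X) \), and—as already used in the proof of Lemma~\ref{lemma: opens = left exact localisation of etale}—effective epimorphisms over \( \Topos X \) are exactly effective epimorphisms in \( \Etale{\Topos X} \simeq \Sh(X) \). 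The statement therefore reduces to the purely sheaf-theoretic claim that for every sheaf \( V \) there is a small set \( I \) of open subsets together with an effective epimorphism \( \coprod_{i \in I} \FinalSheaf_{U_i} \to V \).

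The construction rests on the observation that each \( \FinalSheaf_U \) corepresents evaluation at \( U \). Writing \( j \From U \hookrightarrow X \) for the open inclusion, one has \( \FinalSheaf_U \simeq j_! \FinalSheaf \), so that
\[
	\Map(\FinalSheaf_U, V) \simeq \Map(j_! \FinalSheaf, V)
	\simeq \Gamma(U, j^\ast V) \simeq V(U)
\]
for every sheaf \( V \); in particular a map \( \FinalSheaf_U \to V \) is the same datum as a point of the section space \( V(U) \). I would then let \( I \) be the small set of pairs \( (U, \sigma) \) with \( U \subset X \) open and \( \sigma \in \PiZero V(U) \), choose for each pair a representing section \( \FinalSheaf_U \to V \) lying in the component \( \sigma \), and assemble these into a single map
\[
	g \From \coprod_{(U, \sigma) \in I} \FinalSheaf_U \longrightarrow V .
\]

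It remains to check that \( g \) is an effective epimorphism, which is the one real step. Because truncation is a left adjoint and each \( \FinalSheaf_U \) is \( (-1) \)-truncated, applying \( \tau_{\leq 0} \) turns \( g \) into the map \( \coprod_{(U,\sigma)} \FinalSheaf_U \to \PiZero V \) of discrete sheaves, and it suffices to verify that this is an epimorphism of sheaves of sets \cite[6.2.3]{doi:10.1515/9781400830558}. Since \( X \) is a topological space, such epimorphisms are detected on stalks, so I would fix \( x \in X \): the stalk of \( \coprod_{(U,\sigma)} \FinalSheaf_U \) at \( x \) is the set \( \{(U,\sigma) : x \in U\} \), whereas the stalk of \( \PiZero V \) at \( x \) is \( \PiZero(V_x) \simeq \varinjlim_{x \in U} \PiZero V(U) \), as \( \PiZero \) commutes with the filtered colimit computing the stalk. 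By construction \( (U,\sigma) \) maps to the germ of \( \sigma \), and since every element of \( \varinjlim_{x \in U} \PiZero V(U) \) is represented by some \( \sigma \in \PiZero V(U) \) with \( x \in U \), the map is surjective at every stalk. Hence \( \tau_{\leq 0} g \) is an epimorphism and \( g \) an effective epimorphism. The only genuine obstacle is bookkeeping: pinning down the identifications of étale maps, open subtoposes and effective epimorphisms so that this elementary generating-family argument in \( \Sh(X) \) delivers the topos-theoretic statement.
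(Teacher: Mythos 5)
Your proof is correct, and it builds exactly the same covering family as the paper — the coproduct of \( \FinalSheaf_U \) indexed by all opens \( U \subset X \) and all homotopy classes of maps \( \FinalSheaf_U \to V \), i.e.\ all \( \sigma \in \PiZero V(U) \) — but it verifies that this map is an effective epimorphism by a genuinely different route. The paper observes that the open subtoposes form a \emph{dense} subcategory of \( \Etale{\Topos X} \), so that \( \Topos V \) is the colimit of the opens mapping to it, and then checks effective epimorphy internally: via the subobject-injectivity criterion for effective epimorphisms together with universality of colimits. You instead use the corepresentability \( \Map(\FinalSheaf_U, V) \IsCanonicallyIsomorphicTo V(U) \), reduce via \( \tau_{\leq 0} \) to an epimorphism of sheaves of \emph{sets} (the correct reference here is HTT 7.2.1.14 rather than \S 6.2.3, a minor slip), and then detect that epimorphism on stalks. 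Each approach has its merits: the paper's argument is purely topos-internal and applies verbatim to any localic ∞-topos, with no appeal to points; yours is more elementary and concrete, but leans on the fact that the 1-topos of sheaves of sets on a topological space has enough points — harmless here, since the lemma is stated for topological spaces, though it would not transfer to stratifications over general locales. It is also worth noting that your use of stalks is legitimate despite the paper's emphasis that stalks cannot detect equivalences of ∞-sheaves: effective epimorphisms, unlike equivalences, depend only on the 0-truncation, which is precisely why your reduction works and does not collide with the hypercompleteness discussion in \hyperref[Section: hypersheaves]{\S\thinspace\ref{Section: hypersheaves}}.
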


\begin{proof}
	By construction of \( \Topos X \), the
	open subtoposes \( \Topos U \subset \Topos X \)
	form a dense
	subcategory of the ∞-category of étale maps over \( \Topos X \).
	Thus given any étale map \( \Topos V \to \Topos X \), the
	canonical map
	\[
		\coprod_{\Topos U \subset \Topos X}
		\coprod_{\substack{\Topos U \to \Topos V\\
		\text{over } \Topos X}}
		\Topos U
		\longrightarrow \Topos V
	\]
	is an effective epimorphism.
	Indeed, this can be checked using injectivity of the pullback
	of subobjects
	\cite[6.2.3.10]{doi:10.1515/9781400830558}.
	Since subobjects of coproducts can be identified with
	products of subobjects
	\cite[6.2.3.9]{doi:10.1515/9781400830558},
	we may start by letting
	let \( \Topos Y, \Topos Z \subset \Topos V \) be two
	open subtoposes of \( \Topos V \) such that
	\( \Topos Y \times_{\Topos V} \Topos U
	= \Topos Z \times_{\Topos V} \Topos U \)
	for every \( \Topos U \to \Topos V \) over \( \Topos X \).
	Then
	\begin{align*}
		\Topos Y
			& \IsCanonicallyIsomorphicTo
			\Topos Y \times_{\Topos V} \Topos V \\
			& \IsCanonicallyIsomorphicTo
			\Topos Y \times_{\Topos V}
			\varinjlim_{
			\substack{\Topos U \to \Topos V \\
			\text{over } \Topos X}} \Topos U
			&& \text{(by density)}
			\\
			& \IsCanonicallyIsomorphicTo
			\varinjlim_{
			\substack{\Topos U \to \Topos V \\
			\text{over } \Topos X}}
			\Topos Y \times_{\Topos V} \Topos U
			&& \text{(by universality of colimits)}
			\\
			& \IsCanonicallyIsomorphicTo
			\varinjlim_{
			\substack{\Topos U \to \Topos V \\
			\text{over } \Topos X}}
			\Topos Z \times_{\Topos V} \Topos U
			&& \text{(by assumption)}
			\\
			& \IsCanonicallyIsomorphicTo
				\Topos Z && \text{(by symmetry)}
	\end{align*}
	one gets \( \Topos Y = \Topos Z \).
\end{proof}

\begin{proposition}[(Translation)]
	Let \( X \) be a topological space, with associated
	∞-topos
	\( \Topos X \) and hypercomplete subtopos
	\( \Hypercompletion {\Topos X} \subset \Topos X \).
	
	Then,
	\[
		\LocallyConstantSheaves(X)
		\IsCanonicallyIsomorphicTo \LocallyConstantSheaves
		(\Topos X)
	\]
	locally constant sheaves on \( X \) are the locally constant
	sheaves on \( \Topos X \).

	Likewise,
	\[
		\LocallyHyperConstantHypersheaves(X)
		\IsCanonicallyIsomorphicTo \LocallyConstantSheaves
		(\Hypercompletion {\Topos X})
	\]
	locally hyperconstant hypersheaves on \( X \) are the
	locally constant sheaves on \( \Hypercompletion {\Topos X} \).
\end{proposition}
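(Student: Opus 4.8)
The plan is to observe that each claimed equivalence compares two full subcategories of a single ambient ∞-category: for the first statement both sides are full subcategories of $\Sh(X) = \Topos X$, and for the second both are full subcategories of $\Hyp(X) = \Hypercompletion{\Topos X}$. It therefore suffices to show that the two competing local-constancy conditions select the same objects, and the asserted equivalences will then be witnessed by the identity functor. For the first statement the two notions of \emph{constant} sheaf already agree verbatim, since $\FinalMap^\ast K$ has the same meaning whether $\FinalMap$ is read as $X \to \ast$ or as the final geometric morphism $\Topos X \to \ast$. Everything thus reduces to comparing ``trivialised by an open cover'' with ``trivialised by an effective-epi family of étale maps''.

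For the direction from the open-cover definition to the ∞-topos definition, I would begin with an open cover $\{U_i\}$ trivialising $\Sheaf F$, pass to the associated open subtoposes $\Topos U_i \subset \Topos X$ by Lemma~\ref{lemma: opens}, and note that the covering condition $\bigcup_i U_i = X$ translates, through the characteristic-sheaf description in the proof of that lemma, into the statement that $\amalg_i \FinalSheaf_{U_i} \to \FinalSheaf_X$ has full support, i.e. that $\amalg_i \Topos U_i \to \Topos X$ is an effective epimorphism. Since each $\Sheaf F|_{U_i}$ is constant, $\Sheaf F$ is locally constant in Lurie's sense.

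The reverse direction carries the real content, and it is exactly what the density lemma supplies. Given an effective-epi family of étale maps $\{\Topos V_k \to \Topos X\}$ with each $\Sheaf F|_{\Topos V_k}$ constant, I would apply the density lemma to each $\Topos V_k$ to obtain effective epimorphisms $\amalg_i \Topos U_{k,i} \to \Topos V_k$ whose sources are open subtoposes of $\Topos X$. Composing and taking coproducts (effective epimorphisms being stable under both) yields an effective epimorphism $\amalg_{k,i} \Topos U_{k,i} \to \Topos X$; the restriction of the constant sheaf $\Sheaf F|_{\Topos V_k}$ along $\Topos U_{k,i} \to \Topos V_k$ stays constant because the composite to $\ast$ is unchanged; and by Lemma~\ref{lemma: opens} each $\Topos U_{k,i}$ is $\Topos U$ for an open $U_{k,i} \subset X$, the effective-epimorphism condition forcing $\bigcup_{k,i} U_{k,i} = X$. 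This produces an open cover trivialising $\Sheaf F$, so the two conditions coincide. The main obstacle is precisely this refinement of an arbitrary étale trivialising family to one by opens, for which the density lemma is the decisive input.

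For the second equivalence I would first match the constant objects: the final geometric morphism $\Hypercompletion{\Topos X} \to \ast$ factors through $\Topos X$, so its inverse image is the composite of $\FinalMap^\ast$ with hypercompletion, which is exactly $\Hyperpullback{\FinalMap}$; hence a constant sheaf on $\Hypercompletion{\Topos X}$ is the same as a hyperconstant hypersheaf on $X$. It then remains to rerun the two-sided argument inside $\Hypercompletion{\Topos X}$, which needs three checks. First, that open subtoposes of $\Hypercompletion{\Topos X}$ still correspond to opens of $X$, which is the last equivalence of Lemma~\ref{lemma: opens}. Second, that open subtoposes remain dense in $\Etale{\Hypercompletion{\Topos X}}$, so that the density lemma still applies; this holds because each representable $\FinalSheaf_U$ is $(-1)$-truncated, hence hypercomplete, and these objects continue to generate $\Hyp(X)$ under colimits. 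Third, that restriction along an open embedding agrees whether computed in $\Sh(X)$ or in $\Hyp(X)$, which is the content of the cited lemma \cite[A.3.6]{arXiv:0911.0018} identifying $j^\ast$ with $\Hyperpullback{j}$ for open $j$. With these in place the identification of the local conditions transfers verbatim, and the second equivalence follows.
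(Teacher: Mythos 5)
Your proposal is correct and follows essentially the same route as the paper: the easy inclusion because open embeddings are étale, the converse by refining an étale trivialising family through the density lemma (using stability of effective epimorphisms under coproducts and composition) and then identifying open subtoposes of \( \Topos X \) with opens of \( X \), and the hypercomplete case by matching constant sheaves on \( \Hypercompletion {\Topos X} \) with hyperconstant hypersheaves via \( \Hyperpullback \FinalMap \). The only cosmetic difference is in the hypercomplete step, where the paper transfers effective epimorphisms from \( \Topos X \) along the left-exact hypercompletion functor (using that every étale map over \( \Hypercompletion {\Topos X} \) comes from one over \( \Topos X \)), whereas you re-establish density of the opens directly in \( \Hyp(X) \); both variants rest on the same underlying facts, namely that \( (-1) \)-truncated objects are hypercomplete and that hypercompletion is a left-exact left adjoint.
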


\begin{proof}
	Since open embeddings are étale maps, it is clear that
	locally constant sheaves on \( X \) are
	locally constant on \( \Topos X \).

	Let us look at the reverse direction.
	Let \( \Sheaf F \)
	be a locally constant sheaf on \( \Topos X \)
	and let \( \amalg_{i \in I} \Topos V_i
	\twoheadrightarrow \Topos X \) be an
	étale effective epimorphism such that the pullback
	of \( \Sheaf F \) to each \( \Topos V_i \) is constant.

	By the previous lemma,
	for every \( i \in I \) there exists
	a covering
	\( \amalg_{j \in J_i} \Topos U_{ij}
	\twoheadrightarrow \Topos V_i \) over \( \Topos X \),
	by open subtoposes of \( \Topos X \).
	As small coproducts of effective epimorphisms
	are again effective epimorphisms
	\cite[6.2.3.11]{doi:10.1515/9781400830558}
	and the composition of two effective epimorphisms
	is again an epimorphism
	\cite[6.2.3.12]{doi:10.1515/9781400830558},
	we get an effective epimorphism
	\[
		\amalg_{i \in I} \amalg_{j \in J_i} \Topos U_{ij}
		\longrightarrow
		\amalg_{i \in I} \Topos V_i
		\longrightarrow \Topos X
	\]
	covering \( \Topos X \).
	As the pullback of a constant sheaf is again a constant
	sheaf, the restriction of \( \Sheaf F \) to each
	\( \Topos U_{ij} \) is constant.
	Finally, since effective epimorphisms are preserved by
	left exact functors, this one is sent to an effective
	epimorphism
	in \( \Opens {\Topos X} \)
	\UnskipRef{lemma: opens = left exact localisation of etale}
	which is just \( \Opens X \)
	\UnskipRef{lemma: opens},
	that is \( \cup_{i \in I} \cup_{j \in J_i} U_{ij} = X\).

	Now for \( \Hypercompletion {\Topos X} \).
	Since it is a subtopos of \( \Topos X \), every étale
	map with codomain \( \Hypercompletion {\Topos X} \)
	is of the form
	\( \Hypercompletion {\Topos V}
	\to \Hypercompletion {\Topos X} \)
	with \( \Topos V \to \Topos X \) an étale map.
	If
	\( \amalg_{i \in I} \Hypercompletion {\Topos V_i}
	\to \Hypercompletion {\Topos X} \) is an effective epimorphism,
	then as hypercompletion is left exact,
	we get an effective epimorphism
	\[
		\amalg_{i \in I} \amalg_{j \in J_i}
		\Hypercompletion {\Topos U_{ij}}
		\longrightarrow
		\amalg_{i \in I} \Hypercompletion {\Topos V_i}
		\longrightarrow \Hypercompletion {\Topos X}
	\]
	by the same argument as above.
	In addition to the previous arguments, we add that
	for any open \( U \subset X \), constant
	sheaves on \( \Hypercompletion {\Topos U} \) correspond
	to hyperconstant sheaves on \( U \).
\end{proof}

\begin{remark}[(Terminology)]
	\label{Remark: change of terminology}
	Since locally hyperconstant hypersheaves on \( X \)
	are the locally constant sheaves on
	\( \Hypercompletion {\Topos X} \), it has naturally
	led Haine to call them `locally constant hypersheaves'.
	We had to change this terminology in order to distinguish
	between hypersheaves that are locally hyperconstant
	and hypersheaves that are locally constant (as sheaves).
\end{remark}

\subsection{(Hyper)constructible (hyper)sheaves}

Continuing with the two possible functorialities, we shall
obtain constructible sheaves and hyperconstructible hypersheaves.

\begin{definition}[(Constructible sheaves)]
	A sheaf of spaces \( \Sheaf F \)
	on an \( A \)\=/stratified space \( X \)
	is said to be \( A \)-constructible if its
	restriction
	\( s_a^\ast \Sheaf F \)
	to each stratum \( X_a \) is locally constant for every
	\( a \in A \).

	We shall denote by
	\( \ConstructibleSheaves A (X) \subset \Sh(X) \)
	the full subcategory of \( A \)-constructible sheaves on \( X \)
\end{definition}

\begin{remark}
	By construction, given 
	a stratified map \( f \From X \to Y \)
	between an \( A \)-stratified space and a
	\( B \)-stratified space,
	the pullback functor
	\[
		\begin{tikzcd}[ampersand replacement=\&]
			\Sh(Y)
			\arrow[r, "f^\ast"]
				\& \Sh(X) \\
			\ConstructibleSheaves B(Y)
			\ar[u, hook]
			\arrow[r, "f^\ast"]
				\& \ConstructibleSheaves A(X)
				\ar[u, hook]
		\end{tikzcd}
	\]
	preserves constructible sheaves.
\end{remark}

\begin{definition}[(Hyperconstructible hypersheaves)]
	A sheaf \( \Sheaf F \) on \( X \) shall be called
	\( A \)-hyperconstructible
	if the hyperrestriction \( \Hyperpullback {s_a} \Sheaf F \)
	is locally hyperconstant for every \( a \in A \).

	We shall denote
	by \( \HyperConstructibleHypersheaves A (X)
	\subset \Hyp(X) \) the full subcategory of
	\( A \)\=/hyperconstructible hypersheaves.
\end{definition}

\begin{remark}[(Terminology)]
	Following a previous remark
	\UnskipRef{Remark: change of terminology} for the terminology
	about locally constant hypersheaves, what we have chosen
	to call hyperconstructible hypersheaves are the
	constructible hypersheaves of Haine.
	We had to change the terminology in order to distinguish
	between hypersheaves that are hyperconstructible and those
	that are constructible (as sheaves).
\end{remark}

\begin{warning}
	Here again \( \HyperConstructibleHypersheaves A (X)
	\not \subset \ConstructibleSheaves A (X) \), a priori.
\end{warning}

\begin{remark}
	By construction, given a stratified map \( f \From X \to Y \)
	between an \( A \)-stratified space and
	a \( B \)-stratified space, the hyperpullback functor
	\[
		\begin{tikzcd}[ampersand replacement=\&]
			\Hyp(Y)
			\arrow[r, "\Hyperpullback f"]
				\& \Hyp(X) \\
			\HyperConstructibleHypersheaves B(Y)
			\ar[u, hook]
			\arrow[r, "\Hyperpullback f"]
				\& \HyperConstructibleHypersheaves A(X)
				\ar[u, hook]
		\end{tikzcd}
	\]
	preserves hyperconstructible hypersheaves.
\end{remark}

The main distinctive feature of hyperconstructible hypersheaves is their
invariance under stratified homotopy equivalences.
It makes hyperconstructible hypersheaves the natural ∞-analogue
of the usual theory of locally constant sheaves and
constructible sheaves with values in sets.

\begin{theorem}[{\cite[2.3]{arXiv:2010.06473}}]
	\label{theorem: homotopy invariance of hyperconstructibles}
	An \( A \)-stratified homotopy equivalence \( f \From X \to Y \)
	between two \( A \)\=/stratified spaces
	induces an equivalence
	\[
		\begin{tikzcd}
			\HyperConstructibleHypersheaves A (Y)
			\rar["\Hyperpullback f"]
				& \HyperConstructibleHypersheaves A (X)
		\end{tikzcd}
	\]
	between their ∞-categories of
	\( A \)-hyperconstructible hypersheaves.
\end{theorem}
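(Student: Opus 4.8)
The plan is to deduce the statement from the single case of a cylinder projection together with the functoriality of hyperpullback \UnskipRef{Proposition: hyperpullback}. Give \( [0,1] \) the trivial stratification, so that \( [0,1]\times X \) becomes \( A \)-stratified through \( [0,1]\times X\to X\to A \) with strata \( [0,1]\times X_a \), and write \( p\From [0,1]\times X\to X \) for the projection and \( i_t\From X\to [0,1]\times X \) for the section at \( t\in[0,1] \). All of \( p \) and the \( i_t \) are stratified, so by the remark that hyperpullback preserves hyperconstructibility they restrict to functors between the relevant categories \( \HyperConstructibleHypersheaves A \). The key lemma I would isolate is the following. \emph{Cylinder lemma:} the functor \( \Hyperpullback p\From\HyperConstructibleHypersheaves A(X)\to\HyperConstructibleHypersheaves A([0,1]\times X) \) is an equivalence.

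Granting this, everything else is formal. Since \( p\circ i_t=\id_X \), the relation \( \Hyperpullback{(p i_t)}\IsCanonicallyIsomorphicTo\Hyperpullback{i_t}\circ\Hyperpullback p \) from \UnskipRef{Proposition: hyperpullback} exhibits each \( \Hyperpullback{i_t} \) as an inverse to the equivalence \( \Hyperpullback p \); in particular \( \Hyperpullback{i_0}\IsCanonicallyIsomorphicTo\Hyperpullback{i_1} \). Hence any two stratified-homotopic maps \( \phi,\psi\From X\to Z \), written as \( \phi=H\circ i_0 \) and \( \psi=H\circ i_1 \) for a stratified homotopy \( H \), satisfy \( \Hyperpullback\phi\IsCanonicallyIsomorphicTo\Hyperpullback\psi \) on hyperconstructibles. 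Feeding in the two stratified homotopies witnessing that \( f\From X\to Y \) is a stratified homotopy equivalence with inverse \( g \), and using \UnskipRef{Proposition: hyperpullback} once more to split \( \Hyperpullback{(g f)} \) and \( \Hyperpullback{(f g)} \), I obtain \( \Hyperpullback f\circ\Hyperpullback g\IsCanonicallyIsomorphicTo\id \) and \( \Hyperpullback g\circ\Hyperpullback f\IsCanonicallyIsomorphicTo\id \), so \( \Hyperpullback f \) is an equivalence.

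It remains to prove the cylinder lemma, and here is where the real work lies. One half is free: \( \Hyperpullback{i_0}\circ\Hyperpullback p\IsCanonicallyIsomorphicTo\id \) exhibits \( \Hyperpullback{i_0} \) as a retraction of \( \Hyperpullback p \), so \( \Hyperpullback p \) is a split monomorphism and it suffices to prove it is essentially surjective, equivalently that every hyperconstructible hypersheaf on \( [0,1]\times X \) is constant in the \( [0,1] \)-direction. I would attack this through the adjunction \( \Hyperpullback p\dashv p_\ast \) on hypersheaves: first show the unit \( \Sheaf H\to p_\ast\Hyperpullback p\Sheaf H \) is an equivalence for hyperconstructible \( \Sheaf H \) on \( X \), and then show the counit \( \Hyperpullback p\, p_\ast\Sheaf G\to\Sheaf G \) is an equivalence for hyperconstructible \( \Sheaf G \) on the cylinder. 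Because these are maps of hypersheaves, each may be checked on stalks, and at a point over the stratum \( X_a \) the computation localises along the fibre \( [0,1]\times\{x\} \), where it reduces to the contractibility of \( [0,1] \), that is to the triviality of the (stratified) cohomology of a compact interval with locally hyperconstant coefficients.

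The main obstacle is this essential-surjectivity, or constancy, statement, and it is genuinely where hypercompleteness is indispensable: for ordinary constructible sheaves the analogous invariance can fail, which is exactly why the theorem is phrased for hyperconstructible hypersheaves, and why one cannot here induct on the depth of the stratification. I would organise the argument stratum by stratum. On a single stratum the claim is the \emph{non-stratified} homotopy invariance of locally hyperconstant hypersheaves along \( [0,1]\times X_a\to X_a \); by the Translation proposition this is the invariance of locally constant sheaves on the hypercomplete topos \( \Hypercompletion{\Topos{X_a}} \), which holds because such sheaves depend only on its shape and \( [0,1] \) contributes a contractible factor. The delicate point is gluing these stratumwise statements across the exit directions: the hyperrestrictions to adjacent strata must be matched compatibly, and this compatibility is precisely what hyperdescent supplies — detection of equivalences on stalks together with the compatibility of \( \Hyperpullback{(\cdot)} \) under composition \UnskipRef{Proposition: hyperpullback} — while the straight-line homotopy \( [0,1]\times[0,1]\times X\to[0,1]\times X \) contracting the cylinder onto \( i_0 \) is itself stratified and so keeps the argument inside the categories in play. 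I expect assembling this gluing, uniformly in \( A \) and with no ascending chain hypothesis, to be the hardest and most technical part of the proof.
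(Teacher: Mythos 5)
The paper offers no proof of this statement to compare yours against: it is imported as a black box from Haine's homotopy-invariance paper (the bracketed citation [2.3, arXiv:2010.06473]) and is used downstream, never re-derived. That said, your skeleton --- reduce everything to the cylinder projection \( p \From [0,1] \times X \to X \) via the functoriality of hyperpullbacks \UnskipRef{Proposition: hyperpullback}, then prove that \( \Hyperpullback{p} \) is an equivalence on hyperconstructible hypersheaves by showing that the unit and counit of \( \Hyperpullback{p} \IsAdjointTo p_\ast \) are equivalences, detected on stalks --- is the standard route, and it is essentially the strategy of the cited reference. The purely formal half of your argument (the sections \( i_t \), the splitting of \( \Hyperpullback{(gf)} \) and \( \Hyperpullback{(fg)} \), the conclusion that \( \Hyperpullback{f} \) is an equivalence) is correct as written.

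Two genuine problems remain in your cylinder lemma. First, a logical slip: a retraction \( \Hyperpullback{i_0} \circ \Hyperpullback{p} \IsCanonicallyIsomorphicTo \id \) together with essential surjectivity does \emph{not} make \( \Hyperpullback{p} \) an equivalence --- consider the inclusion of the basepoint \( \ast \to \mathrm{B}G \) for a nontrivial group \( G \), which is essentially surjective and admits the retraction \( \mathrm{B}G \to \ast \), yet is not an equivalence; fully faithfulness must be proven separately, which your unit argument does supply, so the slip is not load-bearing, but the sentence as stated is false. Second, and more seriously, the heart of the matter is compressed into ``the computation localises along the fibre.'' The stalk of \( p_\ast \Sheaf G \) at \( x \) is a filtered colimit of sections over the open sets \( [0,1] \times U \), not sections over the fibre \( [0,1] \times \{x\} \); identifying the two is nonabelian proper base change for the proper map \( p \) (compactness of \( [0,1] \), \cite[\S 7.3]{doi:10.1515/9781400830558}), a substantive theorem, and on top of it one needs that locally hyperconstant hypersheaves on \( [0,1] \) are constant with global sections equal to any stalk --- which itself requires an argument, for instance the coincidence proposition \UnskipRef{proposition: coincidence} applied to the interval, which is locally of singular shape, contractible, and of finite covering dimension. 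Your sketch invokes neither ingredient, yet without them there is no proof. Relatedly, you misplace where the difficulty lies: the ``gluing across adjacent strata'' that you single out as the hardest step is a non-issue, because once the unit and counit are maps of hypersheaves, hypercompleteness reduces everything to stalks, each of which sits inside a single stratum; the real work is exactly the fibrewise identification above, uniformly in the base point but entirely local in \( X \).
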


\subsection{Constructible hypersheaves}

We have warned the reader that
one does not have an inclusion
\( \HyperConstructibleHypersheaves A (X) \subset
\ConstructibleSheaves A (X) \)
due to the different functorialities between constructibility and
hyperconstructibility.

They are in fact related by a correspondence
\[
	\begin{tikzcd}
			& \ConstructibleHypersheaves A (X)
			\ar[ld, hook]
			\ar[rd, hook']
				& \\
		\ConstructibleSheaves A (X)
			&
				& \HyperConstructibleHypersheaves A (X)
				\\
			&
	\end{tikzcd}
\]
via the ∞-category of \( A \)-constructible hypersheaves.
This third ∞-category shall become the may object of study in this
article.
The left inclusion is obvious, the right one requires a lemma.

\begin{lemma}
	\label{Lemma: constructible implies hyperconstructible}
	Let \( X \) be a topological space.
	Then
	\[
		\LocallyConstantHypersheaves (X)
		\subset
		\LocallyHyperConstantHypersheaves (X)
	\]
	locally constant hypersheaves are locally hyperconstant.

	More generally, if \( X \) is \( A \)-stratified, then
	\[
		\ConstructibleHypersheaves A (X)
		\subset \HyperConstructibleHypersheaves A (X)
	\]
	\( A \)-constructible hypersheaves on \( X \) are
	\( A \)-hyperconstructible.
\end{lemma}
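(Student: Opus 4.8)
The plan is to deduce both inclusions from a single observation: \emph{the hypercompletion of a locally constant sheaf is a locally hyperconstant hypersheaf}. Granting this, the first inclusion is immediate. A locally constant hypersheaf \( \Sheaf H \) is in particular locally constant as a sheaf, and being already a hypersheaf it satisfies \( \Sheaf H \simeq \Hypercompletion{\Sheaf H} \); the observation applied to \( \Sheaf F = \Sheaf H \) then shows that \( \Sheaf H \) is locally hyperconstant. For the second inclusion, let \( \Sheaf H \) be an \( A \)-constructible hypersheaf. By definition \( s_a^\ast \Sheaf H \) is locally constant on the stratum \( X_a \) for every \( a \in A \), while the hyperrestriction is, by definition of the hyperpullback, the hypercompletion \( \Hyperpullback{s_a} \Sheaf H \simeq \Hypercompletion{s_a^\ast \Sheaf H} \). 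Applying the observation to \( \Sheaf F = s_a^\ast \Sheaf H \) shows that \( \Hyperpullback{s_a} \Sheaf H \) is locally hyperconstant for every \( a \), which is exactly what it means for \( \Sheaf H \) to be \( A \)-hyperconstructible.

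It remains to prove the observation, and the crux is the following commutation: for an open embedding \( j \From U \subset Z \) and any sheaf \( \Sheaf F \) on \( Z \), one has \( j^\ast \Hypercompletion{\Sheaf F} \simeq \Hypercompletion{j^\ast \Sheaf F} \). To see this, first note that \( j^\ast \Hypercompletion{\Sheaf F} \) is a hypersheaf: since \( j \) is an open embedding, \( j^\ast \) agrees with the hyperpullback \( \Hyperpullback{j} \) on hypersheaves \cite[A.3.6]{arXiv:0911.0018}, and the latter takes values in \( \Hyp(U) \). Next, the image under \( j^\ast \) of the hypercompletion unit \( \Sheaf F \to \Hypercompletion{\Sheaf F} \) is again a stalk equivalence: restriction along an open embedding preserves stalks, so on the stalk at any \( x \in U \) it is the equivalence \( \Sheaf F_x \simeq \Hypercompletion{\Sheaf F}_x \). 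Thus \( j^\ast \Sheaf F \to j^\ast \Hypercompletion{\Sheaf F} \) is a stalk equivalence whose target is a hypersheaf; since hypercompletion is the localisation inverting stalk equivalences, this exhibits \( j^\ast \Hypercompletion{\Sheaf F} \) as the hypercompletion of \( j^\ast \Sheaf F \).

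With the commutation in hand the observation is routine. Let \( \Sheaf F \) be locally constant on \( Z \), witnessed by an open covering \( \{ j_i \From U_i \subset Z \}_{i \in I} \) with each \( j_i^\ast \Sheaf F \simeq \FinalMap^\ast K_i \) constant. Restricting the hypercompletion and using the commutation gives
\[
	j_i^\ast \Hypercompletion{\Sheaf F}
	\simeq \Hypercompletion{j_i^\ast \Sheaf F}
	\simeq \Hypercompletion{\FinalMap^\ast K_i}
	\simeq \Hyperpullback{\FinalMap} K_i,
\]
a hyperconstant hypersheaf by definition, so \( \Hypercompletion{\Sheaf F} \) is locally hyperconstant on \( Z \). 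The one genuine subtlety is precisely this commutation of hypercompletion with open restriction; everything else is formal. It is worth stressing that a stratum inclusion \( s_a \) is \emph{not} open, so that \( \Hyperpullback{s_a} \Sheaf H \) really differs from \( s_a^\ast \Sheaf H \) and must be accessed through a hypercompletion — this is exactly why it is hyperconstructibility, rather than constructibility, that appears on the right-hand side.
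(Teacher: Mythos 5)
Your proof is correct, and its overall strategy matches the paper's: in both arguments everything reduces to showing that, on the members of a suitable open covering of each stratum, the restriction of \( \Hyperpullback{s_a} \Sheaf H \) is the hypercompletion of a constant sheaf, hence hyperconstant by definition. The one genuine difference is how that reduction is justified. The paper gets it from its remark on functoriality of hyperpullbacks, \( \Hyperpullback{(s_a j_{a,i})} \Sheaf H \IsCanonicallyIsomorphicTo \Hyperpullback{j_{a,i}} \Hyperpullback{s_a} \Sheaf H \) (a formal consequence of \( (gf)_\ast = g_\ast f_\ast \)), combined with \cite[A.3.6]{arXiv:0911.0018}; you instead prove the commutation \( j^\ast \Hypercompletion{\Sheaf F} \IsCanonicallyIsomorphicTo \Hypercompletion{j^\ast \Sheaf F} \) for open embeddings by hand, using that open restriction preserves stalks and that hypercompletion is the localisation at stalk equivalences (the factorisation argument through the unit, which you only sketch, is the standard one for reflective localisations and is fine). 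Your packaging is also slightly more unified: the first inclusion becomes a special case of your observation, whereas the paper treats it separately via the statement that a sheaf which is both constant and a hypersheaf is hyperconstant. What you gain is a self-contained argument independent of the pushforward-composition remark; what you pay is re-deriving, via stalks, exactly the special case of that remark which the paper can simply cite.
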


\begin{proof}
	Assume \( \Sheaf H \) be a locally constant constant hypersheaf.
	Then there exists
	an open covering \( j_i \From U_i \subset X \) such that
	\( j_i^\ast \Sheaf H \) be a constant sheaf.
	But since \( j_i \) is an open embedding,
	\( j_i^\ast \Sheaf H \) is a hypersheaf.
	A sheaf which is both constant and a hypersheaf 
	is hyperconstant.
	constant if and only if 
	By construction, the hypercompletion of a constant sheaf
	is a constant hypersheaf,
	so \( \Hyperpullback {j_i} \Sheaf F \) is a constant hypersheaf
	and \( \Sheaf F \) is hyperlocally constant.

	The case of an \( A \)-constructible sheaf now follows from
	the functoriality of hypercompleted pullbacks
	\UnskipRef{Proposition: hyperpullback}: for every \( a \in A \),
	by assumption
	\( s_a^\ast \Sheaf H \) is locally constant, so let
	\( j_{a, i} \From U_i \subset X_a \) be an open covering so
	that \( j_{a,i}^\ast s_a^\ast \Sheaf F \) is constant.
	Then \( \Hyperpullback {(s_a j_{a, i})} \Sheaf H
	\IsCanonicallyIsomorphicTo
	\Hyperpullback {j_{a,i}} \Hyperpullback {s_a} \Sheaf H \)
	is a constant hypersheaf.
\end{proof}

\begin{remark}
	There is another correspondence relating constructible
	sheaves and hyperconstructible hypersheaves,
	\[
		\begin{tikzcd}
			\ConstructibleSheaves A (X)
			\ar[rd, hook]
				&
					& \HyperConstructibleHypersheaves A (X)
					\ar[dl, hook']
					\\
				&
				\HyperConstructibleSheaves A (X)
				&
		\end{tikzcd}
	\]
	it is the ∞-category of hyperconstructible sheaves.
	But we shall not use it.
\end{remark}

\section{Coincidences}

In this section we shall show that in some general cases, the
∞-categories of hyperconstructible hypersheaves, constructible
hypersheaves and constructible sheaves, actually coincide.

\subsection{Stratum case}

We start with the case of a single stratum.
Lurie has introduced the notion of topological space
‘locally of singular shape’
\cite[A.4.15]{arXiv:0911.0018}.
These are spaces \( X \) for which the counit map
\( \GeometricRealisation {\Sing(U)} \to U \) is a shape
equivalence for every open \( U \subset X \).
Letting \( \FinalMap_X \From X \to \ast \) denote
the projection to the point, if \( X \) is locally of singular shape
then the singular sheaf \( \FinalMap_X^\ast \Sing(X) \) admits a
canonical global
section \( \FinalSheaf_X \to \FinalMap_X^\ast \Sing(X) \).
This canonical section allows the definition of a functor
\[
	\begin{tikzcd}[row sep = tiny]
		\Slice \Spaces {\Sing(X)}
		\rar["\LocallyConstantRealisation_X"]
			& \Sh (X) \\
		K
		\rar[mapsto]
			& \FinalMap_X^\ast K
			\times_{\FinalMap_X^\ast \Sing(X)}
			\FinalSheaf_X
	\end{tikzcd}
\]
which is fully faithful and whose image is equivalent to the
subcategory of locally constant sheaves on \( X \)
\cite[A.2.15]{arXiv:0911.0018}.

When \( X \) is locally of singular shape, the pullback
functor \( \FinalMap_X^\ast \) admits a left adjoint
\( (\FinalMap_X)_! \)
\cite[A.2.8]{arXiv:0911.0018}
and so does \( \LocallyConstantRealisation_X \).
In particular, \( \LocallyConstantRealisation_X \) preserves
all small limits; it also preserves all small colimits.

\begin{proposition}[(Coincidence, stratum case)]
	\label{proposition: coincidence}
	Let \( X \) be a space which is locally of singular shape.
	Then
	\[
		\LocallyHyperConstantHypersheaves(X)
		=
		\LocallyConstantHypersheaves(X)
		=
		\LocallyConstantSheaves(X)
	\]
	locally hyperconstant hypersheaves are locally constant
	and locally constant sheaves are hypersheaves.
\end{proposition}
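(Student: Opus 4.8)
The whole statement reduces to a single fact about a single stratum: on any space $Y$ that is locally of singular shape, every constant sheaf is already hyperconstant, that is, the comparison $\FinalMap_Y^\ast K \to \Hyperpullback{\FinalMap_Y} K$ is an equivalence for each $K \in \Spaces$. Granting this, the three categories collapse. Writing $L = \LocallyConstantSheaves(X)$, $LH = \LocallyConstantHypersheaves(X)$ and $HH = \LocallyHyperConstantHypersheaves(X)$, the plan is to produce the chain $LH \subset HH \subset L = LH$, which forces all three to coincide. The inclusion $LH \subset HH$ is exactly \UnskipRef{Lemma: constructible implies hyperconstructible}, so only $L = LH$ and $HH \subset L$ remain to be established.

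The heart of the matter, and the step I expect to be the real obstacle, is the constant case. Here I would exploit that when $Y$ is locally of singular shape the pullback $\FinalMap_Y^\ast$ admits a left adjoint $(\FinalMap_Y)_!$ \cite[A.2.8]{arXiv:0911.0018}. Since $\Sh(\ast) = \Spaces$ is hypercomplete, any $K \in \Spaces$ is a hypersheaf on the point, so the criterion for the hyperpullback and the pullback of a hypersheaf to agree \cite[A.3.6]{arXiv:0911.0018} applies to $f = \FinalMap_Y$ and yields $\FinalMap_Y^\ast K \simeq \Hyperpullback{\FinalMap_Y} K$. In particular constant sheaves on $Y$ are hyperconstant, hence hypersheaves. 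I would also record that every open $U \subset X$ is again locally of singular shape, since its open subsets are open subsets of $X$; thus the constant case is available on every member of an open cover of $X$.

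With the constant case in hand the rest is formal. For $L = LH$ it suffices to see that a locally constant sheaf is a hypersheaf, and I would read this off from the presentation $\LocallyConstantRealisation_X(K) = \FinalMap_X^\ast K \times_{\FinalMap_X^\ast \Sing(X)} \FinalSheaf_X$ as a finite limit of sheaves that are individually hypersheaves: the two constant factors by the previous paragraph, and the terminal sheaf $\FinalSheaf_X$ because it is truncated \cite[6.5.1.14]{doi:10.1515/9781400830558}. Since hypersheaves are closed under limits and the image of $\LocallyConstantRealisation_X$ is all of $L$ \cite[A.2.15]{arXiv:0911.0018}, every locally constant sheaf is a hypersheaf. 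For $HH \subset L$, given a locally hyperconstant hypersheaf $\Sheaf H$ with a cover $\{j_i \From U_i \subset X\}$ on which $j_i^\ast \Sheaf H$ is hyperconstant, I would apply the constant case on each $U_i$ (itself locally of singular shape) to rewrite $\Hyperpullback{\FinalMap_{U_i}} K_i \simeq \FinalMap_{U_i}^\ast K_i$; hence each $j_i^\ast \Sheaf H$ is a genuine constant sheaf and $\Sheaf H$ is locally constant. Combining $LH \subset HH$, $HH \subset L$ and $L = LH$ closes the loop and delivers the three asserted equalities.
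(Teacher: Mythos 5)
Your argument is correct, but it reaches the crux --- that constant, and then locally constant, sheaves on a space locally of singular shape are hypersheaves --- through a different key lemma than the paper. The paper argues in the opposite order: it first shows that \emph{locally} constant sheaves are hypersheaves by citing \cite[A.2.17]{arXiv:0911.0018}, whose mechanism is the convergence of truncation towers in $\Slice \Spaces {\Sing(X)}$ together with the fact that $\LocallyConstantRealisation_X$ preserves limits; the coincidence of constant and hyperconstant sheaves on each open $U \subset X$ is then deduced from this, and the ``ripple effect'' finishes as in your last step. You instead settle the constant case first and purely formally, by applying the quoted criterion \cite[A.3.6]{arXiv:0911.0018} (pullback agrees with hyperpullback on hypersheaves when $f^\ast$ admits a left adjoint) to $f = \FinalMap_X$, the left adjoint $(\FinalMap_X)_!$ being exactly what \cite[A.2.8]{arXiv:0911.0018} provides under the singular-shape hypothesis, and noting that $\Sh(\ast) = \Spaces$ is hypercomplete; you then deduce the locally constant case from the fibre-product formula $\LocallyConstantRealisation_X(K) = \FinalMap_X^\ast K \times_{\FinalMap_X^\ast \Sing(X)} \FinalSheaf_X$, the essential surjectivity of $\LocallyConstantRealisation_X$ onto $\LocallyConstantSheaves(X)$, truncatedness of $\FinalSheaf_X$, and closure of hypersheaves under limits in $\Sh(X)$. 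Both routes are sound and both lean on the realisation functor. Yours buys independence from Postnikov-tower convergence, uses only lemmas already quoted in the paper, and isolates the conceptually sharpest form of the coincidence (constant $=$ hyperconstant as an identification of sheaves, not merely of categories); the paper's citation of \cite[A.2.17]{arXiv:0911.0018} disposes of local constancy in one stroke, without needing the explicit formula for $\LocallyConstantRealisation_X$ or the closure of hypersheaves under finite limits.
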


\begin{proof}
	Since truncation towers converge in
	\( \Slice \Spaces {\Sing(X)} \)
	and \( \LocallyConstantRealisation_X \) commutes
	with limits, all locally constant sheaves on \( X \)
	are hypersheaves
	\cite[A.2.17]{arXiv:0911.0018}.
	Since every open \( U \subset X \) is
	again locally of constant shape, the notions
	of constant sheaves and
	hyperconstant sheaves on \( U \) coincide.
	By ripple effect, locally hyperconstant hypersheaves
	on \( X \)
	are locally constant.
\end{proof}

Being locally of singular shape also gives locally constant
sheaves limits and colimits, they are computed as in
the ∞-category of sheaves.

\begin{lemma}
	\label{Corollary: limits of locally constant}
	Let \( X \) be locally of singular shape.
	The ∞-category \( \LocallyConstantSheaves(X) \)
	admits all small limits and colimits.
	In addition, the inclusion
	\( \LocallyConstantSheaves (X) \subset \Sh(X) \)
	preserves small limits and small colimits.
\end{lemma}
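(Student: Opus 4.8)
The plan is to transport everything along the equivalence
\( \LocallyConstantRealisation_X \From \Slice{\Spaces}{\Sing(X)} \to
\LocallyConstantSheaves(X) \) recorded just above, and to exploit the fact,
also noted above, that this realisation functor already preserves both small
limits and small colimits.

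First I would settle existence. The slice \( \Slice{\Spaces}{\Sing(X)} \) is a
slice of the presentable ∞-category \( \Spaces \) over an object, hence is
itself presentable and in particular admits all small limits and all small
colimits. Since \( \LocallyConstantRealisation_X \) is fully faithful with
essential image \( \LocallyConstantSheaves(X) \), its corestriction is an
equivalence of ∞-categories
\( \Slice{\Spaces}{\Sing(X)} \IsHomotopicTo \LocallyConstantSheaves(X) \).
Transporting limits and colimits across this equivalence shows that
\( \LocallyConstantSheaves(X) \) admits all small limits and all small
colimits.

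It remains to see that these are computed as in \( \Sh(X) \), that is, that the
inclusion \( \LocallyConstantSheaves(X) \subset \Sh(X) \) preserves them. Here I
would use that the inclusion precomposed with the equivalence above is exactly
\( \LocallyConstantRealisation_X \), giving a factorisation
\[
	\Slice{\Spaces}{\Sing(X)}
	\xrightarrow{\IsHomotopicTo}
	\LocallyConstantSheaves(X)
	\hookrightarrow
	\Sh(X)
\]
whose composite is \( \LocallyConstantRealisation_X \). Because
\( \LocallyConstantRealisation_X \) admits a left adjoint (induced by
\( (\FinalMap_X)_! \)) it preserves all small limits, and it was recorded above
that it also preserves all small colimits. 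Given any small diagram in
\( \LocallyConstantSheaves(X) \), I would transport it back through the
equivalence, form its (co)limit in \( \Slice{\Spaces}{\Sing(X)} \), and apply
\( \LocallyConstantRealisation_X \); since the first arrow is an equivalence and
the composite preserves the (co)limit, the (co)limit formed in
\( \LocallyConstantSheaves(X) \) agrees with the one formed in \( \Sh(X) \).

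The argument is essentially formal once the equivalence with the slice category
is in hand, so I do not expect a serious obstacle. The only point requiring care
is the bookkeeping that the inclusion genuinely factors
\( \LocallyConstantRealisation_X \) — equivalently, that the equivalence onto
\( \LocallyConstantSheaves(X) \) is the corestriction of
\( \LocallyConstantRealisation_X \) itself — which is what lets one translate
``preserved by \( \LocallyConstantRealisation_X \)'' into ``preserved by the
inclusion''.
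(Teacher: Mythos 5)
Your proposal is correct and follows essentially the same route as the paper: the paper's proof is precisely that \( \Slice{\Spaces}{\Sing(X)} \) admits all small limits and colimits and that \( \LocallyConstantRealisation_X \) — fully faithful with image \( \LocallyConstantSheaves(X) \) and preserving small limits (via its left adjoint) and small colimits — transports them to \( \Sh(X) \). Your write-up merely makes explicit the bookkeeping (presentability of the slice, the corestricted equivalence, and the factorisation of \( \LocallyConstantRealisation_X \) through the inclusion) that the paper leaves implicit.
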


\begin{proof}
	The slice ∞-category \( \Slice \Spaces {\Sing(X)} \)
	has all small limits and colimits and
	\( \LocallyConstantRealisation_X \)
	preserves small limits and small colimits.
\end{proof}

\subsection{Conical case}

Let \( X \) be an \( A \)-stratified space.
The first thing one can ask of \( X \) to have some coincidence
theorem is that each of \( X \) be locally of
singular shape.
This is enough, for example, to guarantee that one can compute
finite limits and small colimits of \( A \)-constructible sheaves on
\( X \).

\begin{lemma}
	\label{Corollary: finite limits of constructibles}
	Let \( X \) be an \( A \)-stratified space.
	Assume that the stratum \( X_a \subset X \) be locally
	of singular shape for each \( a \in A \).
	The ∞-category of \( A \)-constructible sheaves on \( X \)
	admits finite limits and small colimits.
	Moreover, the inclusion
	\( \ConstructibleSheaves A (X) \subset \Sh(X) \)
	preserves finite limits and small colimits.
\end{lemma}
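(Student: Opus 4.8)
The plan is to show that \( \ConstructibleSheaves A (X) \) is closed under finite limits and under small colimits as computed inside \( \Sh(X) \); since it is a full subcategory, this single closure statement simultaneously yields that \( \ConstructibleSheaves A (X) \) admits these (co)limits and that the inclusion \( \ConstructibleSheaves A (X) \subset \Sh(X) \) preserves them. Because constructibility is tested stratum by stratum, the entire argument reduces to controlling how the restriction functors \( s_a^\ast \From \Sh(X) \to \Sh(X_a) \) interact with limits and colimits.

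The first thing I would record is the relevant formal properties of \( s_a^\ast \). As the inverse-image part of a geometric morphism it is left exact, hence preserves finite limits, and as a left adjoint it preserves all small colimits. This asymmetry is exactly what is reflected in the statement: finite limits on the one hand, arbitrary small colimits on the other.

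For the finite-limit case, I would take a finite diagram \( (\Sheaf F_i) \) in \( \ConstructibleSheaves A (X) \), form the limit \( \Sheaf F \IsCanonicallyIsomorphicTo \varprojlim_i \Sheaf F_i \) in \( \Sh(X) \), and verify that \( \Sheaf F \) is \( A \)-constructible. Fixing \( a \in A \), left exactness of \( s_a^\ast \) gives \( s_a^\ast \Sheaf F \IsCanonicallyIsomorphicTo \varprojlim_i s_a^\ast \Sheaf F_i \). Each \( s_a^\ast \Sheaf F_i \) is locally constant on \( X_a \), and since \( X_a \) is locally of singular shape, \( \LocallyConstantSheaves(X_a) \) admits finite limits and the inclusion \( \LocallyConstantSheaves(X_a) \subset \Sh(X_a) \) preserves them \UnskipRef{Corollary: limits of locally constant}; hence the finite limit \( s_a^\ast \Sheaf F \), computed in \( \Sh(X_a) \), is again locally constant. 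As \( a \) was arbitrary, \( \Sheaf F \) is \( A \)-constructible.

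The small-colimit case runs along identical lines, now invoking that \( s_a^\ast \) preserves all small colimits and that \( \LocallyConstantSheaves(X_a) \subset \Sh(X_a) \) preserves small colimits, again by \UnskipRef{Corollary: limits of locally constant}. I do not expect a genuine obstacle here; the only point demanding care is to call on the correct half of the preservation properties of \( s_a^\ast \)—left exactness for limits, left adjointness for colimits—and to note that this is precisely why the limit assertion must be confined to finite diagrams while the colimit assertion need not be.
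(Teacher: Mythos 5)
Your proposal is correct and follows essentially the same route as the paper: reduce to closure inside \( \Sh(X) \), use that each \( s_a^\ast \) preserves finite limits and small colimits, and invoke the fact that \( \LocallyConstantSheaves(X_a) \subset \Sh(X_a) \) admits and preserves small limits and colimits when \( X_a \) is locally of singular shape \UnskipRef{Corollary: limits of locally constant}. Your write-up merely spells out the closure-implies-preservation bookkeeping that the paper leaves implicit.
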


\begin{proof}
	For each \( a \in A \), the functor
	\( s_a^\ast \From \Sh(X) \to \Sh(X_a) \) preserves
	small colimits and finite limits.
	Since \( \LocallyConstantSheaves(X_a) \subset \Sh(X_a) \)
	preserves all small limits and colimits, it follows that
	a finite limit or a small colimit
	of \( A \)-constructible sheaves on \( X \)
	is again \( A \)-constructible. 
\end{proof}

But it is not enough to guarantee any coincidence between the different
notions of constructibility.
Indeed, one needs to add a gluing assumption of the strata together.

The one we shall use here is the conicality introduced by Lurie
\cite[A.5.5]{arXiv:0911.0018};
it is a less demanding condition than most other stratification
hypothesis used in topology.
A space is conically stratified when each point admits a
coordinate decomposition with on one side, a local coordinate
dependant on the stratum and on the other side a radial coordinate
describing a neighbourhood of the point around the stratum.

\begin{definition}[(Open cone)]
	For a topological space \( X \), the \emph{open cone}
	of \( X \) is the set 
		\[
			\Cone(X)
			\coloneqq \{ 0 \} \amalg (\Reals_+^\ast \times X)
		\]
	with topology defined as follows:
	A subset \( U \subset \Cone(X) \)
	is open if and only if
	\( U \cap (\Reals_+^\ast \times X) \) is open,
	and if \( 0 \in U \), then
	\( (0, \varepsilon) \times X \subset U \)
	for some positive real number \( \varepsilon \). 

	If \( X \) is stratified over a poset \( A \), then
	\( \Cone(X) \) is naturally stratified over
	the poset \( A^\triangleleft \) obtained from \( A \) by
	adding a new element smaller than every other element of
	\( A \).
\end{definition}

\begin{warning}
	One should not confuse the \emph{cone}
	on \( X \) with the \emph{collapsed rectangle}
	defined as the quotient
	\( \Reals_+ \times X / \{0\} \times X \).
	When \( X \) is compact and separated, the cone on
	\( X \) and the collapsed rectangle on \( X \) are
	homeomorphic. This is no longer true in the general
	case: the cone on the open interval
	\( (0,1) \) can be embedded in \( \Reals^2 \), whereas
	the collapsed rectangle on \( (0,1) \) is not
	metrizable.

	If \( (X,d) \) is a metric space, the topology of the cone
	\( \Cone(X) \) is metrizable by letting
	\( d((\lambda, x),(\mu, y) = \max(|\lambda - \mu|, d(x,y)) \)
	and by adding \( d(0, (\lambda, x)) = \lambda \).
\end{warning}

\begin{definition}[(Conically stratified space)]
	Let \( f \From X \to A \) be a stratified topological space. 
	Let \( a \in A \) and let
	\( U \subset X_a \) be an open subset.
	We shall say that an open \( V \subset X \)
	is a conical extention of \( U \) if there exists
	a stratified
	space \( L \) over \( A_{a <} \) such that
	\( V \) is homeomorphic
	to \( U \times \Cone(L) \)
	over the poset map
	\( A_{a<}^\triangleleft \IsCanonicallyIsomorphicTo
	A_{a \leq} \subset A\).
	We shall say that \( X \) is conically \( A \)-stratified
	if for every \( a \in A \) every point \( x \in X_a \)
	admits an open neighbourhood in \( X_a \)
	that can be conically extended to \( X \).
\end{definition}

\begin{figure}
	\usetikzlibrary{intersections, pgfplots.fillbetween}
	\begin{tikzpicture}[scale = 1.5]
		\pgfsetlayers{pre main,main}
	\newcommand\LatitudeFront[2]{%
	\draw[very thick, color = #2, name path = #1F]
		(#1:1.5) arc (0:-180:{1.5*cos(#1)} and {0.5*cos(#1)});
	}
	\newcommand\LatitudeBack[2]{%
	\draw[dashed, very thick, color = #2, name path = #1B]
		(#1:1.5) arc (0:180:{1.5*cos(#1)} and {0.5*cos(#1)});
	}
	\LatitudeFront{-20}{Cyan!50};
	\LatitudeBack{-20}{Cyan!50};
	\foreach \i in {1,...,39}
	{\LatitudeFront{\i/2}{Cyan!30};
	\LatitudeFront{-\i/2}{Cyan!30};};
	\LatitudeFront{20}{Cyan!50};
	\LatitudeBack{20}{Cyan!50};
	\LatitudeFront{-20}{Cyan!50};
	\LatitudeBack{-20}{Cyan!50};
	\tikzfillbetween[of=20F and 20B]{Cyan, opacity=0.1};
	\LatitudeFront{0}{Red};
	\LatitudeBack{0}{Red};
		\draw[very thick] (0,0) circle (1.5 cm);
		\draw[very thick, color = white] (0,0) circle (1.53 cm);
	\end{tikzpicture}
	\caption{A submanifold \( \textcolor{Red}{\bm N}
		\subset \bm M \)
		with a tubular neighbourhood
		is an example of
	a conically stratified space with two strata.}
\end{figure}

\begin{argument}[(Reduction)]
	\label{Argument: local structure of cones}
	Let us gather some properties of conically stratified
	spaces that shall be used as a core reduction argument
	in the proofs.
	\begin{itemize}
		\item
			If a conically \( A \)-stratified space
			\( X \) is paracompact,
			then each stratum \( X_a \) can be
			covered with opens \( U \)
			admitting a paracompact conical extension
			\cite[A.5.16]{arXiv:0911.0018}.
			As a consequence, for any local problem
			on paracompact \( X \), one can
			assume that \( X = X_a \times \Cone(L) \);
		\item
			In a paracompact space
			\( X_a \times \Cone(L) \), the closed
			subspace \( X_a \subset X_a \times \Cone(L) \)
			is also paracompact, thus
			\( \FSigma \) open subsets \( W \subset X_a \)
			form a basis of paracompact open
			subsets, stable under intersection
			\cite[7.1.1.1]{doi:10.1515/9781400830558}.
			Moreover, if \( W \subset X_a \) is an
			open \( \FSigma \), then
			\( W \times \Cone(L)
			\subset X_a \times \Cone(L) \)
			is again an open \( \FSigma \) of
			\( X_a \times \Cone(L) \) and is
			again paracompact;
		\item
			For a paracompact \( W \), the
			closed subspace
			\( W \subset W \times \Cone(L) \)
			admits a basis of open neighbourhoods
			\( W \subset V \subset W \times \Cone(L) \)
			all of which are homeomorphic to
			\( W \times \Cone(L) \) as stratified
			spaces \cite[A.5.12]{arXiv:0911.0018};
		\item
			Combining the above arguments, every point
			\( x \in X \)
			admits a basis of conical open neighbourhoods
			\( V_x
			\IsIsomorphicTo U_x \times \Cone(L) \).
	\end{itemize}
\end{argument}
\begin{proposition}[(Coincidence, conical case)]
	\label{Corollary: hyperconstructible = constructible}
	Let \( X \) be a paracompact \( A \)-stratified space,
	such that each stratum \( X_a \) be
	locally of singular shape, for \( a \in A \).
	Then,
	for every \( A \)-hyperconstructible sheaf \( \Sheaf H \)
	on \( X \), the canonical map
	\( s_a^\ast \Sheaf H \to \Hyperpullback{s_a} \Sheaf H \)
	is an equivalence
	on \( X_a \), for each \( a \in A \) and thus
	\[
		\HyperConstructibleHypersheaves A (X)
		= \ConstructibleHypersheaves A (X)
	\]
	\( A \)-hyperconstructible hypersheaves on \( X \) are
	\( A \)-constructible.
\end{proposition}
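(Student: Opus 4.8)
The reverse inclusion $\ConstructibleHypersheaves A (X) \subseteq \HyperConstructibleHypersheaves A (X)$ is already in hand \UnskipRef{Lemma: constructible implies hyperconstructible}, so the plan is to prove the opposite inclusion; for this it suffices to establish the stated equivalence $s_a^\ast \Sheaf H \to \Hyperpullback{s_a} \Sheaf H$. Indeed, once that map is an equivalence its target $\Hyperpullback{s_a}\Sheaf H$ is locally hyperconstant (because $\Sheaf H$ is hyperconstructible), hence locally constant since $X_a$ is locally of singular shape \UnskipRef{proposition: coincidence}; so $s_a^\ast \Sheaf H$ is locally constant for every $a \in A$, i.e. $\Sheaf H$ is $A$-constructible, giving $\HyperConstructibleHypersheaves A (X) \subseteq \ConstructibleHypersheaves A (X)$ and thus equality. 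Since being an equivalence of sheaves is local on $X_a$, I would use paracompactness and conicality to cover $X_a$ by opens $W$ carrying a paracompact conical extension \UnskipRef{Argument: local structure of cones} and reduce to the model $X = X_a \times \Cone(L)$ with $s_a$ the inclusion of the cone-point section $X_a \times \{0\}$.

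Write $p \From X_a \times \Cone(L) \to X_a$ for the projection, a retraction of $s_a$. The heart of the proof is the identification $s_a^\ast \Sheaf H \simeq p_\ast \Sheaf H$. On an open $W \subseteq X_a$ the restriction to the stratum is computed by the filtered colimit $s_a^\ast \Sheaf H(W) \simeq \varinjlim_\varepsilon \Sheaf H(W \times \Cone(L)_{<\varepsilon})$ over a neighbourhood basis of $W \times \{0\}$ which, by the reduction argument, can be taken to consist of conical neighbourhoods homeomorphic to $W \times \Cone(L)$ with largest term $W \times \Cone(L)$ itself, so that $p_\ast \Sheaf H(W) = \Sheaf H(W \times \Cone(L))$ maps canonically into this colimit. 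The transition maps are restrictions along inclusions of nested conical neighbourhoods, each an $A$-stratified homotopy equivalence; since the restrictions of $\Sheaf H$ stay hyperconstructible, stratified homotopy invariance \UnskipRef{theorem: homotopy invariance of hyperconstructibles} supplies an equivalence of $\infty$-categories preserving the terminal object, and because $\Hyperpullback j \simeq j^\ast$ for open embeddings \cite[A.3.6]{arXiv:0911.0018} this forces each transition map on sections $\Sheaf H(V) = \Map(\FinalSheaf_V, \Sheaf H) \to \Sheaf H(V')$ to be an equivalence. A filtered colimit whose transition maps are all equivalences agrees with its initial term, so $p_\ast \Sheaf H \to s_a^\ast \Sheaf H$ is an equivalence, naturally in $W$.

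With this identification the conclusion is formal. The pushforward $p_\ast$ preserves hypersheaves, so $p_\ast \Sheaf H$, and therefore $s_a^\ast \Sheaf H$, is a hypersheaf on $X_a$. Being already hypercomplete, $s_a^\ast \Sheaf H$ is fixed by hypercompletion, so the canonical map $s_a^\ast \Sheaf H \to \Hyperpullback{s_a}\Sheaf H$ is an equivalence, which is exactly what paragraph one requires.

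The main obstacle is the middle identification $s_a^\ast \Sheaf H \simeq p_\ast \Sheaf H$: one must control, via the paracompact conical reduction, a cofinal basis of conical neighbourhoods of the stratum whose mutual inclusions are $A$-stratified homotopy equivalences, and then upgrade stratified homotopy invariance from an equivalence of the ambient $\infty$-categories of hyperconstructible hypersheaves to an equivalence on global sections. Everything else --- that pushforwards preserve hypersheaves, and that locally hyperconstant and locally constant coincide on a stratum of singular shape --- is either formal or already recorded.
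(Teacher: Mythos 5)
Your proposal is correct, and it shares the paper's skeleton: the same two inclusions \DoubleUnskipRef{Lemma: constructible implies hyperconstructible}{proposition: coincidence}, the same localisation to \( X = X_a \times \Cone(L) \) over a basis of paracompact opens \( W \subset X_a \) \UnskipRef{Argument: local structure of cones}, and the same paracompactness formula \( s_a^\ast \Sheaf H(W) \IsCanonicallyIsomorphicTo \varinjlim_{W \subset V} \Sheaf H(V) \). Where you genuinely diverge is the endgame. The paper asserts, for each conical neighbourhood \( V \IsIsomorphicTo W \times \Cone(L) \), that \( \Sheaf H(V) \to \Hyperpullback{s_a}\Sheaf H(W) \) is an equivalence ``by homotopy invariance''; but the implicit comparison there is along the section \( s_a \From W \hookrightarrow V \), which is not literally an \( A \)-stratified homotopy equivalence in the paper's own sense (no stratified retraction \( V \to W \) over the identity of \( A \) can exist once \( L \neq \emptyset \)), so that sentence needs unpacking. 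You instead invoke homotopy invariance \UnskipRef{theorem: homotopy invariance of hyperconstructibles} only for the inclusions \( V' \subset V \) of nested conical neighbourhoods --- maps which genuinely are stratified homotopy equivalences --- combined with the open-embedding comparison \( j^\ast \IsCanonicallyIsomorphicTo \Hyperpullback j \) to make all transition maps of the colimit equivalences; you then identify \( s_a^\ast \Sheaf H \IsCanonicallyIsomorphicTo p_\ast \Sheaf H \) and close formally: \( p_\ast \) preserves hypersheaves, so \( s_a^\ast \Sheaf H \) is already hypercomplete and the hypercompletion unit \( s_a^\ast \Sheaf H \to \Hyperpullback{s_a}\Sheaf H \) is an equivalence. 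This buys precision exactly where the paper is terse, and in effect supplies the unpacking that the paper's last sentence leaves implicit. The one claim you leave unproved is that the nested conical inclusions are \( A \)-stratified homotopy equivalences: this is true, but only because Lurie's construction \cite[A.5.12]{arXiv:0911.0018} produces a cofinal family of ``variable-radius'' cones \( \{(w,t,l) : t < f(w)\} \) whose mutual inclusions admit stratified homotopy inverses by fibrewise rescaling of the cone coordinate; an abstract stratified homeomorphism \( V \IsIsomorphicTo W \times \Cone(L) \) alone does not make an inclusion into a homotopy equivalence, so you should record that rescaling homotopy to make the step airtight.
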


\begin{proof}
	Constructible hypersheaves are
	hyperconstructible
	\UnskipRef{Lemma: constructible implies hyperconstructible}.
	Also, since each stratum \( X_a \) is locally of singular
	shape, for \( a \in A \), locally hyperconstant
	hypersheaves on \( X_a \) coincide with
	locally constant sheaves on \( X_a \)
	\UnskipRef{proposition: coincidence}.
	It shall then be enough to show that
	for every \( A \)-hyperconstructible hypersheaf \( \Sheaf H \)
	the canonical map
	\( \alpha \From s_a^\ast \Sheaf H
	\to \Hyperpullback {s_a} \Sheaf H \)
	is an equivalence on \( X_a \).

	This question is local on \( X_a \),
	so by the reduction arguments for conically stratified space
	\UnskipRef{Argument: local structure of cones},
	we can reduce to the case were \( X = X_a \times \Cone(L) \).
	Continuing the reduction, it is enough to show that
	\( \alpha(W) \) is an equivalence on each
	\( \FSigma \) open subset \( W \subset X_a \) since this
	is a basis stable under intersection.
	For each such \( W \), \( W \times \Cone(L) \) is again
	paracompact and
	we can thus reduce to show that \( \alpha(W) \) is an
	equivalence in the case where \( X = W \times \Cone(L) \).

	Now because \( X \) is paracompact,
	\( s_a^\ast \Sheaf H (W) \IsCanonicallyIsomorphicTo
	\varinjlim_{W \subset V} \Sheaf H(V) \)
	\cite[7.1.5.6]{doi:10.1515/9781400830558}.
	Because \( W \) is paracompact, the neighbourhoods
	\( V \) can be taken homeomorphic to \( W \times \Cone(L) \)
	as stratified spaces
	\UnskipRef{Argument: local structure of cones}.
	In such a case, by homotopy invariance
	\UnskipRef{theorem: homotopy invariance of hyperconstructibles},
	the restriction map
	\( \Sheaf H(V) \to
	\Hyperpullback {s_a} \Sheaf H(W) \) is an equivalence,
	from which we finally
	get that \( \alpha(W) \) is an equivalence.
\end{proof}

\subsection{\( \DD \)-spaces}

We have seen that on conically \( A \)-stratified spaces,
hyperconstructibility
coincides with constructibility for hypersheaves.
In order to add coincidence with constructible sheaves,
as in the stratum case, one needs
to add an assumption on the poset allowing induction on depth.
Namely, one needs to assume that the poset satisfy the ascending
chain condition.

Adding the ascending chain condition to the poset \( A \), we get
a type of spaces that shall become the building brick of the next
construction; we thus give it a name.

\begin{definition}[(\(\DD\)-space)]
	A \( \DD \)-space (for ‘good’ depth
	stratified space) is an \( A \)-stratified space \( X \)
	such that
	\begin{itemize}
		\item
			\( X \) is paracompact;
		\item
			the stratum \( X_a \subset X \)
			is locally of singular shape,
			for each \( a \in A \);
		\item
			\( X \) is conically \( A \)-stratified;
		\item
			\( A \) satisfies the ascending chain condition.
	\end{itemize}
\end{definition}

\begin{remark}
	Any \( \mathrm{C}^0 \)-stratified space in the sense of
	Ayala-Francis-Tannaka is a \( \DD \)\=/space
	\cite[2.1.15]{doi:10.1016/j.aim.2016.11.032}.
\end{remark}

\begin{proposition}[(Coincidence, \( \DD \)-space case)]
	\label{Proposition: D-spaces}
	Let \( X \to A \) be a \( \DD \)-space, then
	\[
		\HyperConstructibleHypersheaves A (X)
		\IsCanonicallyIsomorphicTo
		\ConstructibleHypersheaves A (X)
		\IsCanonicallyIsomorphicTo
		\ConstructibleSheaves A (X)
	\]
	\( A \)-constructible sheaves on \( X \) are hypersheaves
	and \( A \)-hyperconstructible hypersheaves are
	\( A \)-constructible.
\end{proposition}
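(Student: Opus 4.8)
The plan is to establish the two displayed equivalences separately. The first one, $\HyperConstructibleHypersheaves A (X) \IsCanonicallyIsomorphicTo \ConstructibleHypersheaves A (X)$, requires no new work: a $\DD$-space is by definition paracompact, conically $A$-stratified, and has every stratum locally of singular shape, so the coincidence in the conical case \UnskipRef{Corollary: hyperconstructible = constructible} applies directly. Everything else follows once I show that \emph{every $A$-constructible sheaf $\Sheaf F$ on $X$ is automatically a hypersheaf}: this gives $\ConstructibleSheaves A (X) \IsCanonicallyIsomorphicTo \ConstructibleHypersheaves A (X)$ (the inclusion $\ConstructibleHypersheaves A (X) \subset \ConstructibleSheaves A (X)$ being tautological) and, chained with the first equivalence, the whole statement. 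I want to stress that a dimension count is not available here: the strata are only assumed locally of singular shape and may be infinite-dimensional, so their $\infty$-topoi need not have finite homotopy dimension, and hypercompleteness has to be produced from the stratified geometry and the ascending chain condition.

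To prove that $\Sheaf F$ is hypercomplete I would argue locally and inductively. Being a hypersheaf is a local condition \cite[\S 6.5.2]{doi:10.1515/9781400830558}, so by the reduction arguments for paracompact conically stratified spaces \UnskipRef{Argument: local structure of cones} I may assume $X = Z \times \Cone(L)$, with $Z = X_a$ a single stratum locally of singular shape and $L$ a conically stratified space over $A_{a <}$. The ascending chain condition is exactly what turns this into a legitimate induction: it is equivalent to well-foundedness of the relation $>$ on $A$ — every non-empty subset of $A$ has a maximal element — so $A$ carries an ordinal-valued depth function and I may induct on the depth of $a$. When $a$ is maximal, $L$ is empty, $X = Z$ is a single stratum, $\Sheaf F$ is locally constant and therefore a hypersheaf by the stratum case \UnskipRef{proposition: coincidence}; this is the base case.

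For the inductive step I would use the recollement attached to the closed cone-point stratum $i \From Z \hookrightarrow X$ and its open complement $j \From W \hookrightarrow X$, where $W = Z \times (\Reals_+^\ast \times L)$, which writes $\Sheaf F$ as a pullback
\[
	\Sheaf F
	\IsCanonicallyIsomorphicTo
	i_\ast i^\ast \Sheaf F
	\times_{i_\ast i^\ast j_\ast j^\ast \Sheaf F}
	j_\ast j^\ast \Sheaf F .
\]
Since hypersheaves are closed under limits and each $f_\ast$ preserves hypersheaves, it suffices to see that the three corners are hypercomplete. The sheaf $i^\ast \Sheaf F$ is locally constant on $Z$, hence a hypersheaf by the stratum case, so $i_\ast i^\ast \Sheaf F$ is one; and $j^\ast \Sheaf F$ is $A$-constructible on $W$, which is again paracompact and conically stratified with strata locally of singular shape (each being open in a stratum of $X$), all of them lying strictly above $a$ and hence of strictly smaller depth, so $j^\ast \Sheaf F$ is a hypersheaf by the inductive hypothesis and $j_\ast j^\ast \Sheaf F$ is one as well.

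The hard part is the mixed corner $i^\ast j_\ast j^\ast \Sheaf F$, the link (nearby-cycles) sheaf along $Z$, since $i^\ast$ does not preserve hypercompleteness; I expect this to be the main obstacle. The plan is to prove that this sheaf is in fact locally constant on $Z$, after which it is a hypersheaf by the stratum case and the induction closes. Its sections over a small open $V_0 \subset Z$ compute $\Sheaf F$ on $V_0 \times \Reals_+^\ast \times L$; the inclusion $V_0 \times \{1\} \times L \hookrightarrow V_0 \times \Reals_+^\ast \times L$ is a stratified deformation retract, and once $j^\ast \Sheaf F$ is known to be a hypersheaf it is hyperconstructible on $W$, so the invariance of hyperconstructible hypersheaves under stratified homotopy equivalences \UnskipRef{theorem: homotopy invariance of hyperconstructibles}, together with the translation invariance in the $Z$-direction, forces these sections to depend locally constantly on $V_0$. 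This local constancy of the link is precisely where conicality is used, and is the only point that is not a formal consequence of the recollement, the stability of hypersheaves under limits and pushforward, and the inductive hypothesis.
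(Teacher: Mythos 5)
Your handling of the first equivalence is exactly the paper's: the conical-case coincidence\UnskipRef{Corollary: hyperconstructible = constructible} applies verbatim to a \( \DD \)-space. For the second equivalence the paper gives no argument of its own — it cites Lurie's depth induction \cite[A.5.9]{arXiv:0911.0018} — and your reconstruction of that induction (ascending chain condition gives a well-founded, ordinal-valued depth; hypercompleteness is local; reduction to \( X = Z \times \Cone(L) \)\UnskipRef{Argument: local structure of cones}; recollement along the cone-point stratum; stratum case\UnskipRef{proposition: coincidence} for \( i^\ast \Sheaf F \); inductive hypothesis for \( j^\ast \Sheaf F \)) follows the same strategy, so in outline you and the paper agree.

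There is, however, a genuine gap in the one step you single out as the crux, the link \( i^\ast j_\ast j^\ast \Sheaf F \). Your plan is to prove it locally constant by contracting or translating in the \( Z \)-direction, but the stratum \( Z \) is an arbitrary paracompact space which is only locally of singular shape: it carries no translations and need not admit a basis of contractible (or even shape-trivial) opens, so ``translation invariance in the \( Z \)-direction'' has nothing to act on. Moreover, the statements one can actually extract from homotopy invariance — equivalences of sections along a cofinal system of neighbourhoods, hence stalkwise constancy — only identify the link with a constant sheaf \emph{after hypercompletion}; upgrading local hyperconstancy to local constancy via the stratum coincidence\UnskipRef{proposition: coincidence} presupposes that the link is already a hypersheaf, which is the very thing being proved, so that route is circular. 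The fix is that local constancy of the link is not needed at all: writing \( \pi \From X = Z \times \Cone(L) \to Z \) for the projection, the natural map \( \pi_\ast j_\ast j^\ast \Sheaf F \to i^\ast j_\ast j^\ast \Sheaf F \) is an equivalence — this is checked on the basis of \( \FSigma \) opens \( V \subset Z \) using the paracompactness formula for \( i^\ast \) \cite[7.1.5.6]{doi:10.1515/9781400830558}, the cofinality of conical neighbourhoods \( V \times \Cone(L) \)\UnskipRef{Argument: local structure of cones}, and homotopy invariance\UnskipRef{theorem: homotopy invariance of hyperconstructibles} applied to the shrinking inclusions \( V \times (0,\varepsilon) \times L \subset V \times \Reals_+^\ast \times L \), which is legitimate because \( j^\ast \Sheaf F \) is a constructible hypersheaf by induction, hence hyperconstructible\UnskipRef{Lemma: constructible implies hyperconstructible}. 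The left-hand side is a pushforward of the hypersheaf \( j^\ast \Sheaf F \) and is therefore hypercomplete with no further argument; local constancy of the link then follows a posteriori rather than serving as input. With this replacement your induction closes and the rest of your argument stands.
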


\begin{proof}
	The first equality follows from the conical
	case
	\UnskipRef{Corollary: hyperconstructible = constructible}.
	The second can be proven by induction on the depth of
	\( A \) {\cite[A.5.9]{arXiv:0911.0018}}.
\end{proof}

\subsection{Colimit and conical \( \DDO \)-spaces}

We now turn to the main object of study: a class of stratified
spaces on which to extend the representation theorem.
The idea is to consider to those posets \( A \) which
do not satisfy the ascending chain condition but which
can be obtained as a countable union of closed subposets
satisfying the ascending chain condition.

\begin{definition}[(\(\DDO\)-space)]
	A \( \DDO \)-space is a stratified space \( X \to A \)
	such that
	\begin{itemize}
		\item
			\( X \) is paracompact;
		\item
			\( A \) is \( \OrdinalOmega \)-stratified;
		\item
			\( X_{\leq n} \to A_{\leq n} \) is
			a \( \DD \)-space
			for every \( n < \OrdinalOmega \).
	\end{itemize}
	We shall say that
	\( X \to A \to \OrdinalOmega \) is a
	\begin{description}
		\item[conical \( \DDO \)-space]
			if \( X \)
			is conically \( A \)-stratified;
		\item[colimit \( \DDO \)-space]
			if \( X \) coincides with
			the colimit
			\( X_{< \OrdinalOmega} \coloneqq
			\varinjlim_{n < \OrdinalOmega}
			X_{\leq n} \).
	\end{description}
\end{definition}

One may legitimately ask: why divide \( \DDO \)-spaces into
two categories?
This is because
the very topology of the cone is often incompatible with a colimit
topology.
For example, if \( L \) is an \( \OrdinalOmega \)-stratified, then
there is a continuous bijection
\[
	\textstyle
	\varinjlim_{n < \OrdinalOmega} \Cone(L_{\leq n})
	\longrightarrow \Cone(L)
\]
which is not a homeomorphism in the general case.
This leads to an impossibility theorem, where the two conditions
become mutually exclusive.

\begin{remark}
	\label{Remark: converging sequence in colimit topology}
	Let \( X_0 \hookrightarrow
	\cdots \hookrightarrow X_p \hookrightarrow \cdots \)
	be a sequence of closed embeddings
	between \( \TOne \) topological
	spaces and let \( X \) denote its colimit.
	Then every morphism \( K \to X \) with \( K \) compact
	factors through one \( X_p \subset X \)
	\cite[2.4.2]{isbn:9780821843611}.

	As a consequence, a sequence
	\( (x_n)_{n \in \Naturals} \) in \( X \),
	converges only if it is bounded.
\end{remark}

\begin{theorem}[(Impossibility)]
	\label{Theorem: impossibility}
	Assume \( X \) be an \( A \)-stratified space
	such that
	\begin{itemize}
		\item
			\( X \) be \( \TOne \)
			and not empty;
		\item
			\( X_{\leq a} \subset X \) have empty interior
			for each \( a \in A \);
		\item
			\( X \IsCanonicallyIsomorphicTo
			\varinjlim_{a \in A}
			X_{\leq a} \);
		\item
			\( A \) contain an ascending
			chain,
	\end{itemize}
	then \( X \) is not conically stratified.
\end{theorem}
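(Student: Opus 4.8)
The plan is to argue by contradiction: assuming $X$ is conically $A$-stratified, I will produce a convergent sequence that the colimit topology forbids. Everything rests on the description of closed sets in the colimit topology that underlies \UnskipRef{Remark: converging sequence in colimit topology}: a subset $S \subseteq X$ is closed in $X \IsCanonicallyIsomorphicTo \varinjlim_{a \in A} X_{\leq a}$ as soon as $S \cap X_{\leq a}$ is closed in $X_{\leq a}$ for every $a \in A$. Since $X$ is $\TOne$, every finite subset of each $X_{\leq a}$ is closed there, so any $S$ meeting each $X_{\leq a}$ in only finitely many points is automatically closed in $X$. Hence it suffices to build pairwise distinct points $z_0, z_1, \dots$ converging to some $z_\infty \notin \{z_n\}$ and such that, for every $a \in A$, only finitely many $z_n$ lie in $X_{\leq a}$: then $S = \{z_n\}$ is closed yet omits its limit $z_\infty$, a contradiction.

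The convergence of the sequence will come essentially for free from the radial cone coordinate. If $x \in X_a$ admits a conical neighbourhood $V \IsCanonicallyIsomorphicTo U \times \Cone(L)$ and I fix a base point $u \in U$, then for any $\ell_n \in L$ and any radii $\lambda_n \to 0$ the points $(u, (\lambda_n, \ell_n))$ converge to the cone vertex $(u, 0) \in X_a$, because every neighbourhood of $0$ in $\Cone(L)$ contains a slab $(0,\varepsilon) \times L$; this also handles convergence without first countability of $X$, since $V$ is open. Setting $z_n \coloneqq (u,(\lambda_n,\ell_n))$ and $z_\infty \coloneqq (u,0)$, the stratum index of $z_n$ is exactly the stratum index $c_n$ of $\ell_n$ in $L$, which lies in $A_{a<}$; in particular $z_\infty \notin \{z_n\}$ since $z_\infty \in X_a$ while each $z_n$ lives strictly above $a$. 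The problem thus reduces to choosing the $\ell_n$ so that their strata $c_n$ \emph{escape} every principal down-set, i.e. $\{n : c_n \leq \hat a\}$ is finite for every $\hat a \in A$.

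This escape is where the empty-interior hypothesis enters. Since $V$ is a nonempty open set and $X_{\leq \hat a}$ has empty interior, $V \not\subseteq X_{\leq \hat a}$ for every $\hat a$, so $V$, and therefore $L$, contains a point whose stratum is not $\leq \hat a$; that is, the set $C = \{c \in A : L_c \neq \emptyset\}$ of strata occurring in the link has no upper bound in $A$. To convert this unboundedness into an honestly escaping sequence I intend to iterate the construction: the link $L$ is again conically stratified over $A_{a<}$, so passing to a suitable conical neighbourhood inside $L$ reproduces the same configuration one stratum higher, and threading this recursion drives the radial parameters $\lambda_n \to 0$ while producing an infinite strictly ascending chain $a = b_0 < b_1 < b_2 < \cdots$ realised by the chosen points. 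For a strictly ascending chain the escape condition $\{n : c_n \leq \hat a\}$ finite is equivalent to the chain having no upper bound, so the goal is to organise the recursion so that $(b_n)$ is unbounded in $A$; this is exactly the step that uses the standing hypothesis that $A$ fails the ascending chain condition.

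The main obstacle is precisely this last point: extracting from the merely unbounded link-stratum set $C$ a strictly ascending chain that has \emph{no upper bound} in $A$, rather than one accumulating below some element (an unbounded subset of a poset need not contain an unbounded chain, so the recursion must be steered using the given infinite ascending chain together with the empty-interior condition at every stratum). A secondary technical point to verify is that each intermediate link inherits the properties needed to continue the recursion — conicality over $A_{a<}$ and empty-interior layers — and that the finally assembled sequence, which after the local reduction lives in a single chart, converges legitimately; the explicit cone-coordinate computation above takes care of the latter. Once the escaping convergent sequence is in hand, the $\TOne$ and colimit hypotheses do the rest, turning $\{z_n\}$ into a set that is closed in the colimit topology but fails to contain its limit.
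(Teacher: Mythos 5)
Your strategy is the same as the paper's: work inside a conical chart \( V \IsCanonicallyIsomorphicTo U \times \Cone(L) \), use the radial cone coordinate to manufacture a sequence converging to a point of the bottom stratum, and contradict the colimit topology by exhibiting the terms of the sequence as a closed set that misses its limit --- your closed-discrete-set argument is precisely what underlies the paper's use of the remark on converging sequences \UnskipRef{Remark: converging sequence in colimit topology}, and your reduction of the whole problem to finding points \( \ell_n \in L \) whose strata escape every principal down-set is correct. The gap is the step you yourself call ``the main obstacle'', and it is a genuine one, not a technicality: the empty-interior hypothesis only gives that the set \( C \) of link strata has no upper bound in \( A \), and an unbounded subset of a general poset need not contain any escaping sequence. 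Concretely, if \( C = C_1 \cup C_2 \) with each \( C_i \) bounded by some \( u_i \in A \) but \( u_1, u_2 \) admitting no common upper bound, then \( C \) is unbounded, yet every sequence in \( C \) has infinitely many terms below \( u_1 \) or below \( u_2 \), so no choice of the \( \ell_n \) can escape. Your proposed recursion into links does not remove this obstruction, and the hypothesis that \( A \) merely \emph{contains} an ascending chain gives you nothing to steer the recursion with: that chain may itself be bounded in \( A \), and may even lie over a region of \( A \) containing no points of \( X \). Your secondary worry (whether links inherit conicality) is avoidable --- one can always take conical charts at points of \( X \) itself inside \( V \) rather than recursing into the abstract link --- but that does not help with the main gap. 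As written, the proof does not close.

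What the paper does, and what your proposal never considers, is to dispose of this difficulty at the outset by reducing to \( A = \OrdinalOmega \): its proof begins ``one can assume that \( A = \OrdinalOmega \) and that \( X_0 \) is not empty'', i.e.\ it re-indexes the filtration so that the stages \( X_{\leq 0} \subset X_{\leq 1} \subset \cdots \) are totally ordered (this is also the only setting in which the theorem is applied, namely to \( \DDO \)-spaces, where the colimit is taken along \( \OrdinalOmega \)). Over \( \OrdinalOmega \) the extraction problem is trivial: since the index poset is a total order, a point whose stratum is \emph{not below} \( n \) has stratum \emph{above} \( n \), so the empty-interior hypothesis immediately produces points of \( V \) with strictly increasing levels, and a strictly increasing sequence in \( \OrdinalOmega \) automatically escapes every stage \( X_{\leq n} \); the remark on \( \OrdinalOmega \)-indexed colimits of closed \( \TOne \) embeddings then finishes the argument. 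In short, the obstruction you correctly identify is real for the literal general-poset reading you adopted, but the paper dissolves it rather than solves it, by passing to the \( \OrdinalOmega \)-indexed situation; to repair your write-up you should either carry out that reduction explicitly or restrict the statement to the \( \OrdinalOmega \)-filtered setting in which it is used.
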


\begin{proof}
	One can assume that \( A = \OrdinalOmega \)
	and that \( X_0 \) is not empty without loss
	of generality.
	Let \( x \in X_{\leq 0} \), if \( X \) is conically stratified,
	then there exists \( Z \) and \( Y \) such that
	\( Z \times \Cone(Y) \) is stratifiedly homeomorphic to
	an open neighbourhood \( U_x \) of \( x \).
	Since each \( X_{\leq n} \) has empty interior, \( U_x \)
	is not contained in any of them.
	One can thus find a sequence in \( U_x \)
	whose image in \( \OrdinalOmega \) is strictly increasing.
	Let \( (y_n) \) be the corresponding coordinate
	in \( Y \) of this sequence.
	Then for any \( z \in Z \), the sequence
	\( (z, (\lambda_n, y_n)) \) converges to \( (z,0) \) in
	\( Z \times \Cone(Y) \) for any sequence \( \lambda_n \to 0 \)
	but cannot converge
	in \( X \) since it is not bounded in the stratification
	\UnskipRef{Remark: converging sequence in colimit topology}.
\end{proof}

Nevertheless, conical and colimit \( \DDO \)-spaces are very close;
one can always change the global topology of a conical \( \DDO \)-space
to make it become a colimit \( \DDO \)-space.

\begin{lemma}
	\label{Lemma: colimit endofunctor}
	Let \( X \to A \to \OrdinalOmega \) be a \( \DDO \)-space.
	Then \( X_{< \OrdinalOmega} \to A \to \OrdinalOmega \)
	is a colimit \( \DDO \)-space.
	Moreover, if \( Y \to X \) is a continuous map
	making \( Y \to X \to A \to \OrdinalOmega \)
	into a \( \DDO \)-space, then one gets a continuous map
	\( Y_{< \OrdinalOmega} \to X_{\OrdinalOmega} \) over \( A \).
\end{lemma}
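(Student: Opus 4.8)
The plan is to prove both assertions by first establishing the single topological fact that makes everything work: passing to the colimit topology does not change the individual filtration pieces. Write $X_\infty := X_{< \OrdinalOmega} = \varinjlim_{n} X_{\leq n}$, formed along the embeddings $X_{\leq n} \hookrightarrow X_{\leq n+1}$; these are closed, because $A_{\leq n} \subset A$ is downward closed, hence closed in $A$, so $X_{\leq n}$ is closed in $X$ and a fortiori in $X_{\leq n+1}$. The structure maps assemble into a continuous $X_\infty \to A \to \OrdinalOmega$, and $(X_\infty)_{\leq n}$ denotes the preimage of $A_{\leq n}$; since $X_\infty$ and $X$ have the same underlying set this preimage is, set-theoretically, exactly $X_{\leq n}$. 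The whole content of the first statement is that the subspace topology it inherits from $X_\infty$ agrees with its original topology, i.e.\ that $(X_\infty)_{\leq n} \cong X_{\leq n}$ over $A_{\leq n}$.

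For this comparison, the subspace topology is always the coarser one: if $W = V \cap X_{\leq n}$ with $V$ open in $X_\infty$, then $V \cap X_{\leq n}$ is open in $X_{\leq n}$ by the very definition of the colimit topology. For the converse I would take $W \subset X_{\leq n}$ open and construct a coherent family of opens $V_m \subset X_{\leq m}$ with $V_{m+1} \cap X_{\leq m} = V_m$ and $V_m \cap X_{\leq n} = W \cap X_{\leq m}$, then set $V := \bigcup_m V_m$. For $m \leq n$ one takes $V_m := W \cap X_{\leq m}$, which is open since $X_{\leq m} \subset X_{\leq n}$ carries the subspace topology; for $m > n$ one uses that $X_{\leq m-1} \hookrightarrow X_{\leq m}$ is an embedding to extend $V_{m-1}$ to an open $V_m$ of $X_{\leq m}$ with $V_m \cap X_{\leq m-1} = V_{m-1}$, after which $V_m \cap X_{\leq n} = V_{m-1} \cap X_{\leq n} = W$ holds automatically. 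Since $V \cap X_{\leq m} = V_m$ for every $m$, the set $V$ is open in $X_\infty$ and $V \cap X_{\leq n} = W$. I want to stress that this uses only that the transition maps are embeddings; no normality or separation hypothesis enters.

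Granting the homeomorphisms $(X_\infty)_{\leq n} \cong X_{\leq n}$, each filtration piece of $X_\infty$ is literally a $\DD$-space because $X$ is a $\DDO$-space, and $A$ is unchanged, still $\OrdinalOmega$-stratified. The colimit condition is then immediate, $\varinjlim_n (X_\infty)_{\leq n} = \varinjlim_n X_{\leq n} = X_\infty$ with matching topologies, so $X_\infty$ is a colimit $\DDO$-space once we know it is paracompact. This last point is the one genuinely external input, and I expect it to be the main obstacle: it is the only step not reducible to the elementary colimit bookkeeping above, and I would deduce it from the standard fact that a sequential colimit of closed embeddings of paracompact spaces is again paracompact, after noting that the $X_{\leq n}$ are paracompact (being $\DD$-spaces) and $\TOne$, so that such colimit results apply.

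For the functoriality statement, a map $Y \to X$ over $A$ restricts, for each $n$, to a continuous map $Y_{\leq n} \to X_{\leq n}$ between the preimages of $A_{\leq n}$: continuity follows from restriction to the closed subspace $Y_{\leq n}$, and the image lands in $X_{\leq n}$ precisely because the square over $A$ commutes. These restrictions are compatible with the embeddings $Y_{\leq n} \hookrightarrow Y_{\leq n+1}$ and $X_{\leq n} \hookrightarrow X_{\leq n+1}$, so composing with the structure maps $X_{\leq n} \to X_\infty$ gives a cocone on the diagram defining $Y_{< \OrdinalOmega} = \varinjlim_n Y_{\leq n}$. The universal property of the colimit then yields the desired continuous map $Y_{< \OrdinalOmega} \to X_{< \OrdinalOmega}$, which is a map over $A$ since each $Y_{\leq n} \to X_{\leq n}$ is.
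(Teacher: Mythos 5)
Your proof is correct and takes essentially the same route as the paper: the paper's own proof consists of the single observation that paracompactness is the only non-trivial point, settled by the very same standard fact that a sequential colimit of closed embeddings of paracompact spaces is paracompact. The extra details you supply (that each \( X_{\leq n} \) keeps its original topology inside \( X_{< \OrdinalOmega} \), and the functoriality via the universal property of the colimit) are exactly the routine verifications the paper leaves implicit.
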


\begin{proof}
	The only non-trivial thing to check is the paracompactness.
	But since by hypothesis each \( X_{\leq n} \) is paracompact,
	the space \( X_{< \OrdinalOmega} \) is built as a sequential
	colimit of closed embeddings of paracompact spaces and
	is thus paracompact
	\cite[8.2]{doi:10.2307/1969615}.
\end{proof}

\begin{lemma}
	\label{Lemma: push of constructible hypersheaf}
	Let \( X \to A \to \OrdinalOmega \) be a \( \DDO \)-space.
	For each \( n < \OrdinalOmega \), let us denote by \( j(n) \)
	the closed stratified embedding \( X_{\leq n} \subset X \).
	Then for each \( A_{\leq n} \)-constructible sheaf
	\( \Sheaf F \) on \( X_{\leq n} \),
	\[
		\Sheaf F \in \ConstructibleSheaves {A_{\leq n}}
		(X_{\leq n})
		\implies j(n)_\ast \Sheaf F
		\in 
		\ConstructibleHypersheaves A (X)
	\]
	its pushforward \( j(n)_\ast \Sheaf F \)
	is an \( A \)-constructible hypersheaf on \( X \).
\end{lemma}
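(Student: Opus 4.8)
The plan is to verify separately the two properties characterising membership in \( \ConstructibleHypersheaves A(X) \): that \( j(n)_\ast \Sheaf F \) is a hypersheaf, and that it is \( A \)-constructible as a sheaf. For the hypersheaf property, I would exploit that by definition of a \( \DDO \)-space the truncation \( X_{\leq n} \to A_{\leq n} \) is a \( \DD \)-space, so the coincidence theorem in the \( \DD \)-space case \UnskipRef{Proposition: D-spaces} applies and every \( A_{\leq n} \)-constructible sheaf on \( X_{\leq n} \) is automatically a hypersheaf; in particular \( \Sheaf F \in \Hyp(X_{\leq n}) \). Since the direct image functor along any continuous map preserves hypersheaves, it follows at once that \( j(n)_\ast \Sheaf F \in \Hyp(X) \).

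Next I would check that \( s_a^\ast j(n)_\ast \Sheaf F \) is locally constant on each stratum \( X_a \), splitting into two cases according to the image of \( a \) under the composite \( A \to \OrdinalOmega \). If the level of \( a \) is strictly larger than \( n \), then \( X_a \) is disjoint from the closed subspace \( X_{\leq n} \), hence contained in the open complement \( X \setminus X_{\leq n} \); for any \( x \in X_a \) one may choose an open \( V \ni x \) with \( V \cap X_{\leq n} = \emptyset \), and then \( (j(n)_\ast \Sheaf F)(V) = \Sheaf F(\emptyset) \) is terminal, so the restriction of \( j(n)_\ast \Sheaf F \) to \( X_a \) is the constant (terminal) sheaf. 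If instead the level of \( a \) is at most \( n \), then \( X_a \subset X_{\leq n} \) and the stratum inclusion factors as \( s_a = j(n) \circ s'_a \), where \( s'_a \From X_a \hookrightarrow X_{\leq n} \) is the stratum embedding inside \( X_{\leq n} \). Thus \( s_a^\ast j(n)_\ast \Sheaf F \simeq (s'_a)^\ast j(n)^\ast j(n)_\ast \Sheaf F \), and I would use that the counit \( j(n)^\ast j(n)_\ast \Sheaf F \to \Sheaf F \) is an equivalence (because \( j(n) \) is a closed embedding, so \( j(n)_\ast \) is fully faithful) to conclude \( s_a^\ast j(n)_\ast \Sheaf F \simeq (s'_a)^\ast \Sheaf F \), which is locally constant since \( \Sheaf F \) is \( A_{\leq n} \)-constructible. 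Combining the two cases, \( j(n)_\ast \Sheaf F \) restricts to a locally constant sheaf on every stratum, hence is \( A \)-constructible, and being a hypersheaf it lies in \( \ConstructibleHypersheaves A(X) \).

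The main obstacle is precisely the identification \( j(n)^\ast j(n)_\ast \Sheaf F \simeq \Sheaf F \) used for the strata of level at most \( n \). One cannot verify this counit on stalks, since stalks do not detect equivalences of (non-hyper)sheaves — exactly the phenomenon that forces the passage to hypersheaves in the first place. Instead I would invoke the fully faithfulness of pushforward along a closed immersion of ∞-topoi, i.e. the recollement property of the open/closed decomposition of \( X \) along \( X_{\leq n} \) \cite{doi:10.1515/9781400830558}. Everything else reduces to the functoriality of \( (-)^\ast \), the elementary computation of \( j(n)_\ast \) on the open complement, and the coincidence theorem for \( \DD \)-spaces.
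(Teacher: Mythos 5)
Your proof is correct, and its skeleton matches the paper's: the hypersheaf property is obtained exactly as in the paper (since \( X_{\leq n} \to A_{\leq n} \) is a \( \DD \)-space, \( \Sheaf F \) is a hypersheaf by the coincidence proposition\UnskipRef{Proposition: D-spaces}, and pushforward preserves hypersheaves), and constructibility is checked stratum by stratum through the same dichotomy \( X_a \cap X_{\leq n} = \emptyset \) versus \( X_a \subset X_{\leq n} \). Where you diverge is in how the two stratum computations are justified: the paper disposes of both cases with a single appeal to proper base change for closed embeddings \cite[7.3.2.13]{doi:10.1515/9781400830558}, whereas you handle the disjoint case by the elementary observation that \( j(n)_\ast \Sheaf F \) has terminal sections on opens missing \( X_{\leq n} \), and the contained case by factoring \( s_a = j(n) \circ s'_a \) and invoking full faithfulness of \( j(n)_\ast \) (the recollement property of a closed immersion of ∞-topoi). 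Both of these facts live in the same section of Lurie's book as the base-change statement, so the difference is mild; your version is more self-contained, and your worry that the counit \( j(n)^\ast j(n)_\ast \Sheaf F \to \Sheaf F \) cannot be verified on stalks is well placed — full faithfulness of the closed pushforward is indeed the correct substitute. One detail worth flagging: on strata disjoint from \( X_{\leq n} \) you identify the restriction as the \emph{terminal} constant sheaf, while the paper calls it the \emph{initial} sheaf; your computation \( (j(n)_\ast \Sheaf F)(V) = \Sheaf F(\emptyset) \simeq \ast \) is the accurate one (sections of a pushforward from a closed subspace, over opens missing it, are a point, not empty), though the discrepancy is immaterial for the argument since either sheaf is constant, hence locally constant.
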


\begin{proof}
	Let \( a \in A \).
	Either \( X_a \cap X_{\leq n} = \emptyset \), in
	which case
	\( s_a^\ast j(n)_\ast \Sheaf F \) is the initial sheaf
	by proper base change
	\cite[7.3.2.13]{doi:10.1515/9781400830558},
	or \( X_a \subset X_{\leq n} \), in which case one has
	\( s_a^\ast j(n)_\ast \Sheaf F \IsCanonicallyIsomorphicTo
	s_a^\ast \Sheaf F \) again by proper base change and is
	locally constant by hypothesis.

	Since by assumption \( X_{\leq n} \) is a \( \DD \)-space,
	\( \Sheaf F \) is a hypersheaf
	\UnskipRef{Proposition: D-spaces}
	and as pushforwards preserve
	hypersheaves, \( j(n)_\ast \Sheaf F \) is also a hypersheaf.
\end{proof}

\begin{theorem}[(Dévissage, colimit case)]
	\label{Theorem: dévissage colimit}
	Let \( X \to A \to \OrdinalOmega \)
	be a colimit \( \DDO \)-space
	The inclusion maps \( j(n) \From X_{\leq n} \subset X \)
	induce an adjunction
	\[
		\begin{tikzcd}[column sep = huge]
			\Sh(X) \arrow[r, shift left=2,"j^\ast"]
			&
			\varprojlim_{n < \OrdinalOmega}
			\Sh(X_{\leq n})
			\arrow[l, shift left=2, "j_\ast"]
		\end{tikzcd}
	\]
	which is an equivalence of ∞-categories and which
	reduces to an equivalence
	\[
		\textstyle
		\ConstructibleSheaves A (X)
		\IsCanonicallyIsomorphicTo 
		\varprojlim_{n < \OrdinalOmega}
		\ConstructibleSheaves {A_{\leq n}} (X_{\leq n})
	\]
	between the ∞-category of \( A \)-constructible sheaves
	on \( X \) and the inverse limit of the ∞-categories
	of constructible sheaves on each \( X_{\leq n} \).
	Moreover,
	\[
		\ConstructibleHypersheaves A (X)
		=
		\ConstructibleSheaves A (X)
	\]
	\( A \)-constructible sheaves on \( X \) are hypersheaves.
\end{theorem}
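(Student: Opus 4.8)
The plan is to first establish the sheaf-level equivalence and then cut it down to constructible sheaves, deducing the hypersheaf statement almost for free. I would begin with the elementary observation that, because $X$ carries the colimit topology, an open $U \subset X$ is precisely a compatible family $(U_n)_{n < \OrdinalOmega}$ with $U_n \subset X_{\leq n}$ open and $U_n = U_{n+1} \cap X_{\leq n}$; hence the frame $\Opens X \IsCanonicallyIsomorphicTo \varprojlim_{n < \OrdinalOmega} \Opens{X_{\leq n}}$ is the limit along the closed-restriction maps. The candidate right adjoint is $j_\ast(\Sheaf F_n)_n \coloneqq \varprojlim_{n < \OrdinalOmega} j(n)_\ast \Sheaf F_n$, and the adjunction $j^\ast \dashv j_\ast$ follows formally from the level-wise adjunctions $j(n)^\ast \dashv j(n)_\ast$ together with the description of mapping spaces in a limit of $\infty$-categories.

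The heart of the matter is that both the unit and the counit of this adjunction are equivalences, and I would show that both reduce to a single statement: that the closed-restriction functor $j(m)^\ast$ commutes with the tower limits $\varprojlim_n j(n)_\ast(-)$. For the counit this is transparent: using $j(n)^\ast j(n)_\ast \IsCanonicallyIsomorphicTo \id$ for a closed embedding and the factorisations $j(m) = j(n) \circ j(m,n)$ for $m \leq n$, where $j(m,n) \colon X_{\leq m} \hookrightarrow X_{\leq n}$, one computes $j(m)^\ast j(n)_\ast \Sheaf F_n \IsCanonicallyIsomorphicTo j(m,n)^\ast \Sheaf F_n \IsCanonicallyIsomorphicTo \Sheaf F_m$ for $n \geq m$ via the compatibility data, so the restricted tower is eventually constant equal to $\Sheaf F_m$; the terms with $n < m$ are cofinally irrelevant, and if $j(m)^\ast$ passes inside the limit one lands on $\Sheaf F_m$. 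The unit $\Sheaf G \to \varprojlim_n j(n)_\ast j(n)^\ast \Sheaf G$ is handled symmetrically.

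The main obstacle is precisely this interchange of $j(m)^\ast$ with a sequential limit, since $j(m)^\ast$ is a left adjoint and need not preserve infinite limits. This is where paracompactness and the colimit topology must enter. I would evaluate on an open $W \subset X_{\leq m}$ and use that, $X$ being paracompact, the closed-restriction is computed as a filtered colimit over neighbourhoods, $(j(m)^\ast \Sheaf G)(W) \IsCanonicallyIsomorphicTo \varinjlim_{W \subset V} \Sheaf G(V)$ \cite[7.1.5.6]{doi:10.1515/9781400830558}; the problem then becomes interchanging this filtered colimit of neighbourhoods with the sequential limit over $n$. The key leverage is that compact subsets of $X$ are bounded in the filtration \UnskipRef{Remark: converging sequence in colimit topology}, so that finite data only sees finitely many strata; combined with an $\FSigma$ exhaustion one can arrange a cofinal system of neighbourhoods compatible with the filtration for which the two operations commute. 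Establishing this interchange cleanly is the technical crux of the theorem.

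Granting the equivalence $\Sh(X) \IsCanonicallyIsomorphicTo \varprojlim_n \Sh(X_{\leq n})$, the constructible refinement is routine. The pullbacks $j(n)^\ast$ preserve constructibility, giving the forward functor; conversely, if each $j(n)^\ast \Sheaf G$ is $A_{\leq n}$-constructible then for $a \in A_{\leq n}$ the stratum inclusion factors as $X_a \subset X_{\leq n} \subset X$, so $s_a^\ast \Sheaf G$ is a restriction of $j(n)^\ast \Sheaf G$ to a stratum and hence locally constant; thus $\Sheaf G$ is $A$-constructible and the equivalence restricts. Finally, for any $A$-constructible $\Sheaf G$ one has $\Sheaf G \IsCanonicallyIsomorphicTo \varprojlim_n j(n)_\ast j(n)^\ast \Sheaf G$; each $j(n)^\ast \Sheaf G$ is $A_{\leq n}$-constructible on the $\DD$-space $X_{\leq n}$, hence a hypersheaf \UnskipRef{Proposition: D-spaces}, so each $j(n)_\ast j(n)^\ast \Sheaf G$ is a hypersheaf \UnskipRef{Lemma: push of constructible hypersheaf}, and since hypersheaves are closed under limits in $\Sh(X)$ the sheaf $\Sheaf G$ is itself a hypersheaf. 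This yields $\ConstructibleHypersheaves A (X) = \ConstructibleSheaves A (X)$.
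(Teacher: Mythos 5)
Your endgame is fine: cutting the equivalence down to the constructible subcategories (every stratum $X_a$ lies in some $X_{\leq n}$, so constructibility can be tested levelwise), and deducing the hypersheaf statement from $\Sheaf G \IsCanonicallyIsomorphicTo \varprojlim_{n<\OrdinalOmega} j(n)_\ast j(n)^\ast \Sheaf G$ together with \UnskipRef{Proposition: D-spaces}, \UnskipRef{Lemma: push of constructible hypersheaf} and closure of hypersheaves under limits --- this is exactly what the paper does. The genuine gap is upstream: you never prove the sheaf-level equivalence $\Sh(X) \IsCanonicallyIsomorphicTo \varprojlim_{n<\OrdinalOmega} \Sh(X_{\leq n})$ on which all of that rests. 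You correctly reduce both unit and counit to the assertion that $j(m)^\ast$ commutes with the limits $\varprojlim_{n} j(n)_\ast(-)$, but at precisely that point the argument stops: you call it ``the technical crux'' and defer it. Since that interchange \emph{is} the theorem, what you have written is an outline with its one nontrivial step replaced by a placeholder. Moreover, the levers you name do not obviously supply the missing step: filtered colimits do not commute with sequential limits in $\Spaces$; the boundedness of compacta in the filtration \UnskipRef{Remark: converging sequence in colimit topology} controls maps out of compact spaces, not sections of a sheaf over an open set; and an open neighbourhood in $X$ of a closed $W \subset X_{\leq m}$ is, by definition of the colimit topology, a compatible family of opens in \emph{all} the $X_{\leq n}$, so producing a cofinal system of such neighbourhoods already requires extending data through infinitely many strata --- which is the very difficulty the interchange was supposed to dissolve.

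The paper closes this gap with a citation rather than an argument: the defining hypotheses of a colimit $\DDO$-space (a paracompact $X$ which is the colimit of a sequence of closed embeddings of paracompact spaces) are exactly the hypotheses of \cite[7.1.5.8]{doi:10.1515/9781400830558}, which asserts precisely the equivalence $\Sh(X) \IsCanonicallyIsomorphicTo \varprojlim_{n<\OrdinalOmega}\Sh(X_{\leq n})$ induced by $j^\ast \IsAdjointTo j_\ast$. So the repair is easy --- quote that result and your remaining two paragraphs complete the proof --- but as it stands your proposal asks the reader to accept a re-derivation of Lurie's theorem whose central step is acknowledged to be missing, and whose surrounding sketch (a formal colimit/limit interchange keyed to compactness) would need to be replaced by the genuine neighbourhood-extension arguments for closed subspaces of paracompact spaces that Lurie's proof relies on.
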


\begin{proof}
	The fact that \( j^\ast \IsAdjointTo j_\ast \) is an
	equivalence of ∞-categories follows from the fact that
	\( X \) is the colimit of a sequence of closed embeddings
	of paracompact spaces
	\cite[7.1.5.8]{doi:10.1515/9781400830558}.
	Then the restriction to the subcategories of contructible
	sheaves follow directly.
	Finally, since
	since \( X_{\leq n} \) is a \( \DD \)-space for every
	\( n < \OrdinalOmega \), all \( A_{\leq n} \)-constructible
	sheaves on \( X_{\leq n} \) are hypersheaves
	\UnskipRef{Proposition: D-spaces} and as
	limits of hypersheaves are again hypersheaves, we get that
	all \( A \)-constructible sheaves on \( X \)
	are hypersheaves.
\end{proof}

This theorem for colimit \( \DDO \)-spaces,
together with the coincidence proposition for conically stratified
spaces shall let us see that constructible hypersheaves (which are
a priori not functorial) inherit the functoriality of
hyperconstructible hypersheaves.
In particular it shall also inherit its homotopy invariance.

\begin{corollary}[(Functoriality of constructible hypersheaves)]
	Let \( X \to A \to \OrdinalOmega \) be a colimit or a conical
	\( \DDO \)-space
	and let \( Y \to B \) be a stratified space.
	Let \( f \From X \to Y \) be a stratified map.
	Then for any \( B \)-constructible hypersheaf
	\( \Sheaf H \)
	\[
		\Sheaf H \in \ConstructibleHypersheaves B (Y)
		\implies \Hyperpullback{f} \Sheaf H
		\in 
		\ConstructibleHypersheaves A (X)
	\]
	its hyperpullback \( \Hyperpullback{f} \Sheaf H \) is
	\( A \)-constructible.

	In particular, if \( f \) is a stratified homotopy
	equivalence between two conical or colimit \( \DDO \)-spaces,
	then \( \Hyperpullback{f} \) induces an equivalence
	between the ∞-categories of constructible hypersheaves.
\end{corollary}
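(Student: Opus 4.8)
The plan is to handle both cases with a common first move and then to split according to the two coincidence results. Since \( \Sheaf H \) is \( B \)-constructible it is \emph{a fortiori} \( B \)-hyperconstructible \UnskipRef{Lemma: constructible implies hyperconstructible}, and since hyperpullback along a stratified map preserves hyperconstructibility, the hypersheaf \( \Hyperpullback f \Sheaf H \) is automatically \( A \)-hyperconstructible. The entire content of the statement is thus to promote \emph{hyperconstructible} to \emph{constructible}. In the conical case this is immediate: a conical \( \DDO \)-space is paracompact and conically \( A \)-stratified by definition, and each stratum \( X_a \) occurs as a stratum of the \( \DD \)-space \( X_{\leq n} \) for any \( n \) with \( a \in A_{\leq n} \), so it is locally of singular shape. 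The conical coincidence proposition \UnskipRef{Corollary: hyperconstructible = constructible} then gives \( \HyperConstructibleHypersheaves A (X) = \ConstructibleHypersheaves A (X) \), whence \( \Hyperpullback f \Sheaf H \) is \( A \)-constructible.

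The colimit case is the substantial one and proceeds by reconstructing \( \Hyperpullback f \Sheaf H \) from the closed pieces \( X_{\leq n} \). Write \( g_n \From X_{\leq n} \to Y \) for \( f \circ j(n) \). As above, \( \Hyperpullback {g_n} \Sheaf H \) is \( A_{\leq n} \)-hyperconstructible, and since \( X_{\leq n} \) is a \( \DD \)-space it is in fact \( A_{\leq n} \)-constructible \UnskipRef{Proposition: D-spaces}. I would then observe that, along the closed embeddings \( j(m,n) \From X_{\leq m} \subset X_{\leq n} \), ordinary pullback and hyperpullback agree on constructible sheaves: pullback of a constructible sheaf is constructible, hence a hypersheaf \UnskipRef{Proposition: D-spaces}, hence already hypercomplete. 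By functoriality of hyperpullback \UnskipRef{Proposition: hyperpullback} the family \( (\Hyperpullback {g_n} \Sheaf H)_n \) is coherently compatible, and by the previous observation its transition maps are exactly the \( j(m,n)^\ast \) appearing in the dévissage theorem; it therefore defines an object of \( \varprojlim_n \ConstructibleSheaves {A_{\leq n}} (X_{\leq n}) \), corresponding under the dévissage equivalence \UnskipRef{Theorem: dévissage colimit} to an \( A \)-constructible sheaf \( \Sheaf G \) on \( X \), which is moreover a hypersheaf.

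It remains to identify \( \Hyperpullback f \Sheaf H \) with \( \Sheaf G \). Here lies the main obstacle: the ordinary pullback \( j(n)^\ast \) need not preserve hypersheaves, so one cannot simply restrict \( \Hyperpullback f \Sheaf H \) level by level and read off constructibility; instead I would argue on stalks. Under the equivalence \( \Sh(X) \IsCanonicallyIsomorphicTo \varprojlim_n \Sh(X_{\leq n}) \), the hypercompletion units \( j(n)^\ast \Hyperpullback f \Sheaf H \to \Hyperpullback {g_n} \Sheaf H \cong j(n)^\ast \Sheaf G \) assemble into a canonical map \( \Hyperpullback f \Sheaf H \to \Sheaf G \). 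At every point \( x \in X \) both stalks equal \( \Sheaf H_{f(x)} \), because hypercompletion preserves stalks; the map is thus a stalkwise equivalence of hypersheaves, hence an equivalence. Therefore \( \Hyperpullback f \Sheaf H \cong \Sheaf G \) is \( A \)-constructible.

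For the final assertion, let \( f \) be a stratified homotopy equivalence with stratified inverse \( g \From Y \to X \), with \( X \) and \( Y \) conical or colimit \( \DDO \)-spaces. Haine's invariance theorem \UnskipRef{theorem: homotopy invariance of hyperconstructibles} makes \( \Hyperpullback f \) and \( \Hyperpullback g \) mutually inverse equivalences between \( \HyperConstructibleHypersheaves A (Y) \) and \( \HyperConstructibleHypersheaves A (X) \). By the first part both functors carry constructible hypersheaves to constructible hypersheaves, so the unit and counit equivalences restrict to the full subcategories \( \ConstructibleHypersheaves A (-) \), and \( \Hyperpullback f \) induces the desired equivalence \( \ConstructibleHypersheaves A (Y) \IsCanonicallyIsomorphicTo \ConstructibleHypersheaves A (X) \).
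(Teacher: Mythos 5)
Your proof is correct, and two of its three parts — the conical case and the closing homotopy-invariance argument — are essentially the paper's own (constructible hypersheaves become a subfunctor of hyperconstructible hypersheaves on these spaces, so Haine's theorem\UnskipRef{theorem: homotopy invariance of hyperconstructibles} restricts). Where you genuinely diverge is the colimit case, and there the paper has a much shorter route: the \emph{ordinary} pullback \( f^\ast \Sheaf H \) is \( A \)-constructible, since ordinary pullback along a stratified map always preserves constructibility, and the colimit dévissage theorem\UnskipRef{Theorem: dévissage colimit} says that on a colimit \( \DDO \)-space every \( A \)-constructible sheaf is already a hypersheaf; hence the hypercompletion unit \( f^\ast \Sheaf H \to \Hyperpullback f \Sheaf H \) is an equivalence and there is nothing left to prove. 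You instead reconstruct \( \Hyperpullback f \Sheaf H \) from the \( \DD \)-pieces: restrict along \( g_n = f \circ j(n) \), apply the \( \DD \)-space coincidence\UnskipRef{Proposition: D-spaces} levelwise, reassemble through the dévissage equivalence, and identify the result with \( \Hyperpullback f \Sheaf H \) on stalks. That route does work — a stalkwise equivalence between hypersheaves is an equivalence, and hypercompletion preserves stalks — but it is considerably heavier and hides real costs that your write-up asserts rather than supplies: an honest object of \( \varprojlim_{n} \ConstructibleSheaves {A_{\leq n}} (X_{\leq n}) \), and the comparison map out of \( \Hyperpullback f \Sheaf H \), require coherence data beyond the binary compatibilities furnished by the functoriality remark\UnskipRef{Proposition: hyperpullback}. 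The paper's observation that one never needs to touch the hyperpullback at all in the colimit case — because the plain pullback is automatically hypercomplete there — is precisely what makes those coherence questions evaporate; your detour, in effect, re-proves a piece of the dévissage machinery that the theorem already packages for you.
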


\begin{proof}
	When \( X \) is a conical \( \DDO \)-space: then
	since \( \Sheaf H \) is a \( B \)-constructible
	hypersheaf, it is \( B \)-hyperconstructible
	\UnskipRef{Lemma: constructible implies hyperconstructible}.
	The hyperpullback \( \Hyperpullback{f} \Sheaf H \)
	is then \( A \)-hyperconstructible.
	But since \( X \) is conically stratified,
	it is also \( A \)\=/constructible by coincidence
	\UnskipRef{Corollary: hyperconstructible = constructible}.

	When \( X \) is a colimit \( \DDO \)-space, then
	\( f^\ast \Sheaf H \) is \( A \)-constructible because
	\( \Sheaf H \) is \( B \)-constructible.
	By the previous theorem \( f^\ast \Sheaf H \) is then
	a hypersheaf and so, the canonical map
	\( f^\ast \Sheaf H \to \Hyperpullback{f} \Sheaf H \)
	is an equivalence of sheaves on \( X \).

	As a consequence, when restricted to colimit and
	conical \( \DDO \)-spaces, constructible hypersheaves form
	a subfunctor of the functor of hyperconstructible hypersheaves
	\UnskipRef{Corollary: hyperconstructible = constructible}.
	It is thus invariant under stratified homotopy equivalences
	\UnskipRef{theorem: homotopy invariance of hyperconstructibles}.
\end{proof}

\begin{theorem}[(Dévissage, conical case)]
	\label{Theorem: dévissage conical}
	Let \( X \to A \to \OrdinalOmega \) be a \( \DDO \)-space
	and assume that \( X \) is either a conical
	or a colimit \( \DDO \)-space.
	The inclusion maps \( j(n) \From X_{\leq n} \subset X \)
	induce an adjunction
	\[
		\begin{tikzcd}[column sep = huge]
			\Sh(X) \arrow[r, shift left=2,"j^\ast"]
			&
			\varprojlim_{n < \OrdinalOmega}
			\Sh(X_{\leq n})
			\arrow[l, shift left=2, "j_\ast"]
		\end{tikzcd}
	\]
	which reduces to an equivalence
	\[
		\textstyle
		\ConstructibleHypersheaves A (X)
		\IsCanonicallyIsomorphicTo 
		\varprojlim_{n < \OrdinalOmega}
		\ConstructibleSheaves {A_{\leq n}} (X_{\leq n})
	\]
	between the ∞-category of \( A \)-constructible hypersheaves
	on \( X \) and the inverse limit of the ∞-categories
	of constructible sheaves on each \( X_{\leq n} \).
\end{theorem}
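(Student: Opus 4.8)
The plan is to treat the two cases separately, the colimit case being essentially formal and the conical case carrying all the content. For the colimit case there is nothing new to do: the colimit dévissage theorem \UnskipRef{Theorem: dévissage colimit} already supplies the equivalence \( \ConstructibleSheaves A (X) \IsCanonicallyIsomorphicTo \varprojlim_{n < \OrdinalOmega} \ConstructibleSheaves {A_{\leq n}} (X_{\leq n}) \) together with the identification \( \ConstructibleHypersheaves A (X) = \ConstructibleSheaves A (X) \); substituting the latter into the former yields the statement. So from now on I assume \( X \) to be a conical \( \DDO \)-space.

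In the conical case I would work with the adjunction itself, whose left adjoint is \( j^\ast = (j(n)^\ast)_n \) and whose right adjoint is the termwise pushforward followed by the limit, \( j_\ast (\Sheaf F_n)_n = \varprojlim_n j(n)_\ast \Sheaf F_n \). The first, routine, checks are that this adjunction carries the constructible subcategories into one another. That \( j^\ast \) preserves constructibility is immediate: for \( a \in A_{\leq n} \) one has \( s_a = j(n) \circ s'_a \) with \( s'_a \From X_a \subset X_{\leq n} \), whence \( (s'_a)^\ast j(n)^\ast \Sheaf F \IsCanonicallyIsomorphicTo s_a^\ast \Sheaf F \) is locally constant; in particular \( j(n)^\ast \Sheaf F \) is \( A_{\leq n} \)-constructible, hence a hypersheaf \UnskipRef{Proposition: D-spaces}, and the canonical map \( j(n)^\ast \Sheaf F \to \Hyperpullback{j(n)} \Sheaf F \) is an equivalence. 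That \( j_\ast \) lands in constructible hypersheaves uses \UnskipRef{Lemma: push of constructible hypersheaf}: each \( j(n)_\ast \Sheaf F_n \) is already \( A \)-constructible, and the constructibility of the limit reduces, stratum by stratum, to the eventual constancy of the system \( (s_a^\ast j(n)_\ast \Sheaf F_n)_n \) — equal to \( \Sheaf F_N|_{X_a} \) once \( a \in A_{\leq N} \) — together with the local cone computation below.

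It then remains to verify that the unit \( \Sheaf F \to \varprojlim_n j(n)_\ast j(n)^\ast \Sheaf F \) and the counit are equivalences after restriction to the constructible subcategories. This is the heart of the matter and the only place the conical topology is used essentially. I would check the unit locally, reducing by the conical reduction argument \UnskipRef{Argument: local structure of cones} to a chart \( V \IsCanonicallyIsomorphicTo U \times \Cone(L) \), where the assertion becomes that \( \Sheaf F(V) \) agrees with the inverse limit \( \varprojlim_n \Sheaf F(V \cap X_{\leq n}) \) of the values over the finite-depth pieces \( V \cap X_{\leq n} \IsCanonicallyIsomorphicTo U \times \Cone(L_{\leq n}) \); in other words, an \( A \)-constructible hypersheaf cannot tell the cone \( \Cone(L) \) apart from the colimit \( \varinjlim_n \Cone(L_{\leq n}) \) of its truncations. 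Concretely I would combine the hypercompleteness of \( \Sheaf F \) with the stratified homotopy invariance of hyperconstructible hypersheaves \UnskipRef{theorem: homotopy invariance of hyperconstructibles} and the coincidence on conical charts \UnskipRef{Corollary: hyperconstructible = constructible}, exactly as in the proof of the latter, to compute \( \Sheaf F(V) \) from the link \( L \) and the cone-point stratum. An efficient packaging runs this through the colimit endofunctor \UnskipRef{Lemma: colimit endofunctor}: the continuous bijection \( \iota \From X_{< \OrdinalOmega} \to X \) satisfies \( \Hyperpullback{j'(n)} \Hyperpullback{\iota} \IsCanonicallyIsomorphicTo \Hyperpullback{j(n)} \) \UnskipRef{Proposition: hyperpullback}, so the conical dévissage map is the colimit dévissage equivalence for \( X_{< \OrdinalOmega} \) \UnskipRef{Theorem: dévissage colimit} precomposed with \( \Hyperpullback{\iota} \); since the filtration pieces \( X_{\leq n} \) are unchanged the target is literally the same, and proving the conical dévissage is an equivalence is the same as proving \( \Hyperpullback{\iota} \) is an equivalence on \( A \)-constructible hypersheaves.

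The hard part will be precisely this local cone computation: showing that \( \Sheaf F(U \times \Cone(L)) \to \varprojlim_n \Sheaf F(U \times \Cone(L_{\leq n})) \) is an equivalence for every constructible hypersheaf \( \Sheaf F \). The colimit case is easy because there the whole adjunction \( j^\ast \IsAdjointTo j_\ast \) is already an equivalence of sheaf categories \UnskipRef{Theorem: dévissage colimit}, so no constructibility is needed; here the adjunction is genuinely not an equivalence on all of \( \Sh(X) \), and one must exploit the hyperconstructible structure, through homotopy invariance, to recover it after restriction. Everything else — preservation of constructibility, the eventual constancy on strata, and the bookkeeping relating the two adjunctions — is routine once this local statement is in hand.
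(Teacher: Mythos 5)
Your overall frame is the same as the paper's: both reduce, via the colimit dévissage theorem \UnskipRef{Theorem: dévissage colimit}, to the single map \( j \From X_{< \OrdinalOmega} \to X \), and your treatment of the unit on conical charts (each restriction \( \Sheaf H(U \times \Cone(L)) \to \Sheaf H(U \times \Cone(L_{\leq n})) \) is an equivalence because, by homotopy invariance \UnskipRef{theorem: homotopy invariance of hyperconstructibles} and the conical coincidence \UnskipRef{Corollary: hyperconstructible = constructible}, both sides compute the value on the stratum; conclude by two-out-of-three) is exactly the paper's argument for the unit. But you have inverted where the difficulty sits, and the half you declare routine is precisely where the paper has to work: showing that \( j_\ast \) of a constructible system \( (\Sheaf F_n)_n \) is \( A \)-constructible, equivalently that the counit is an equivalence (essential surjectivity of the dévissage functor). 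Your reduction of this to ``eventual constancy of the system \( (s_a^\ast j(n)_\ast \Sheaf F_n)_n \)'' implicitly commutes \( s_a^\ast \) past the limit \( \varprojlim_{n} \) defining \( j_\ast \). That step fails in general: \( s_a^\ast \) is a left adjoint, computed on the closed stratum as a filtered colimit of sections over neighbourhoods, and filtered colimits of spaces do not commute with sequential inverse limits; termwise eventual constancy says nothing about \( s_a^\ast \varprojlim_n j(n)_\ast \Sheaf F_n \). Nor can your ``local cone computation'' fill this hole, since it quantifies over constructible hypersheaves already given on \( X \) — it is the unit, i.e.\ full-faithfulness, half — whereas here one starts from a bare compatible system, which is exactly what essential surjectivity is about; neither half formally implies the other.

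What the paper actually does at this point is an induction on connectivity. After the conical reduction \UnskipRef{Argument: local structure of cones}, it proves that \( \psi_{\Sheaf F} \From s_a^\ast j_\ast \Sheaf F \to \Sheaf F_a \) is \( k \)-connective for all \( k \): the case \( k = 0 \) uses the surjection \( \PiZero \varprojlim_n \to \varprojlim_n \PiZero \) for towers together with homotopy invariance applied to each pushforward \( j(n)_\ast \Sheaf F_{\leq n} \) \UnskipRef{Lemma: push of constructible hypersheaf}; the inductive step extends a section \( \eta \) of \( s_a^\ast j_\ast \Sheaf F \) over \( X_a \) to a conical neighbourhood using paracompactness \cite[7.1.5.5]{doi:10.1515/9781400830558}, forms \( \Sheaf G \coloneqq \FinalSheaf \times_{\Sheaf F} \FinalSheaf \), which is again \( A \)-constructible because constructible sheaves are stable under finite limits \UnskipRef{Corollary: finite limits of constructibles}, and applies the induction hypothesis to \( \Sheaf G \). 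This connectivity bootstrap (or an equally careful map-of-towers argument supplying the coherences your sketch takes for granted) is the missing content of your proposal; without it you obtain that \( j^\ast \) is fully faithful on \( A \)-constructible hypersheaves, but not that the dévissage map hits every compatible system, so the claimed equivalence is not established.
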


\begin{proof}
	Using the dévissage theorem for the colimit case,
	we shall
	identify the right hand side of the equivalence with the
	∞-category of \( A \)-constructible sheaves
	on \( X_{< \OrdinalOmega} \)
	and see the adjunction \( j^\ast \IsAdjointTo j_\ast \)
	as steming from the canonical map \( j \From X_{<\OrdinalOmega}
	\to X \).
	For a sheaf \( \Sheaf F \) on \( X_{< \OrdinalOmega} \), we
	shall denote by \( \Sheaf F_{\leq n} \) its restriction to
	\( X_{\leq n} \) for each \( n < \OrdinalOmega \)
	and by \( \Sheaf F_a \) its restriction to \( X_a \) for
	each \( a \in A \).

	We shall start by showing that for every \( A \)-constructible
	sheaf \( \Sheaf F \) on \( X_{< \OrdinalOmega} \) the map
	\( \psi_{\Sheaf F} \From
	s_a^\ast j_\ast \Sheaf F \to \Sheaf F_a \)
	is an equivalence for each \( a \in A \).
	This is of local nature, thus
	by the reduction arguments
	\UnskipRef{Argument: local structure of cones}, one can
	assume that \( X = X_a \times \Cone(L) \).
	Continuing the reduction, it shall then suffice to show that
	\( \psi(W) \) is an equivalence for every
	\( \FSigma \) open subset \( W \subset X_a \),
	since this is a basis stable under intersection.
	For each such \( W \), \( W \times \Cone(L) \) is again
	paracompact and we thus reduce to show that
	\( \psi(X_a) \) is an equivalence.

	We shall prove by induction on \( k < \OrdinalOmega \) that
	for every \( \Sheaf F \)
	the map \( \psi_{\Sheaf F}(X_a) \) is \( k \)-connective.
	For the case \( k = 0 \), the canonical map
	\[
		\textstyle
		\PiZero j_\ast \Sheaf F(X)
		\IsCanonicallyIsomorphicTo
		\PiZero\left(\varprojlim_{n < \OrdinalOmega}
		j(n)_\ast \Sheaf F_{\leq n}(X)\right)
		\longrightarrow
		\varprojlim_{n < \OrdinalOmega}
		\PiZero j(n)_\ast \Sheaf F_{\leq n}(X)
	\]
	is surjective.
	As each \( j(n)_\ast \Sheaf F_{\leq n} \) is a constructible
	hypersheaf
	\UnskipRef{Lemma: push of constructible hypersheaf},
	by homotopy invariance
	\UnskipRef{theorem: homotopy invariance of hyperconstructibles}
	the stratified deformation
	retract \( X_a \times \Cone(L) \to X_a \) gives us
	\( j(n)_\ast \Sheaf F_{\leq n}(X) \IsCanonicallyIsomorphicTo
	\Sheaf F_a(X_a) \).
	We deduce that \( \PiZero j_\ast \Sheaf F(X) \to
	\PiZero \Sheaf F_a (X_a) \) is surjective, and thus that
	\( \PiZero \psi_{\Sheaf F}(X_a) \),
	through which the surjection factors, is surjective.

	Assume that we have shown that \( \psi_{\Sheaf F}(X_a) \)
	is \( k \)-connective
	for every \( A \)\=/constructible
	\( \Sheaf F \), for some \( k < \OrdinalOmega \).
	To show that \( \psi_{\Sheaf F}(X_a) \)
	is \( (k+1) \)-connective, it is
	equivalent to show that for every section
	\( \eta \in s_a^\ast j_\ast \Sheaf F(X_a) \), the
	induced map
	\[
		\begin{tikzcd}
			\ast
			\times_{s_a^\ast j_\ast \Sheaf F(X_a)}
			\ast
			\rar["\psi'"]
				& \ast \times_{\Sheaf F_a(X_a)}
				\ast
		\end{tikzcd}
	\]
	is \( k \)-connective.
	Because \( X \) is paracompact and \( X_a \subset X \)
	is a closed embedding, there exists an open neighbourhood
	\( V \) of \( X_a \) in \( X \) on which \( \eta \) can
	be extend to a section \( \overline \eta \)
	\cite[7.1.5.5]{doi:10.1515/9781400830558}.
	Since \( X_a \) is paracompact, shrinking \( V \) if
	necessary, one can assume that \( V \) is homeomorphic to
	\( X_a \times \Cone(L) \) as a stratified space
	\UnskipRef{Argument: local structure of cones}.
	We may thus reduce to the case where \( \overline \eta \)
	is a global section of \( j_\ast \Sheaf F \) on \( X \).
	The induced map \( \FinalSheaf_{X_{< \OrdinalOmega}} \to
	\Sheaf F \) allows us to define
	\( \Sheaf G \coloneqq \FinalSheaf \times_{\Sheaf F}
	\FinalSheaf \).
	The sheaf \( \Sheaf G \) is again \( A \)-constructible
	since it is a finite limit of \( A \)-constructible sheaves
	\UnskipRef{Corollary: finite limits of constructibles}.
	By left exactness,
	one has \( \psi' = \psi_{\Sheaf G}(X_a) \) which is then
	\( k \)-connective by the induction hypothesis.

	We deduce that
	\( j_\ast \Sheaf F \) is \( A \)-constructible.
	Since hypersheaves are stable under pushforwards and
	all \( A \)-constructible sheaves on \( X_{<\OrdinalOmega} \)
	are hypersheaves, \( j_\ast \Sheaf F \)
	is an \( A \)-constructible hypersheaf.

	So, \( j^\ast j_\ast \Sheaf F \) is also \( A \)-constructible 
	and thus, a hypersheaf.
	We also deduce that for every point
	\( x \in X_{<\OrdinalOmega} \),
	the counit map \( (j^\ast j_\ast \Sheaf F)_x \to \Sheaf F_x \)
	is an equivalence and thus that
	\( j^\ast j_\ast \Sheaf F \to \Sheaf F \) is an equivalence.

	We now show that for every \( A \)-constructible
	hypersheaf \( \Sheaf H \) on \( X \), the
	unit map
	\( \upsilon \From \Sheaf H \to j_\ast j^\ast \Sheaf H \) is
	an equivalence.
	Since \( \Sheaf H \) is a hypersheaf, one needs to show
	that
	\( \upsilon_x \From \Sheaf H_x
	\to (j_\ast j^\ast \Sheaf H)_x \) is an equivalence
	for every \( x \in X \).
	For this it is enough to show that
	\( \upsilon(V_x) \From \Sheaf H(V_x)
	\to j_\ast j^\ast \Sheaf H (V_x) \)
	is an equivalence for a basis of open subsets
	\( x \in V_x \subset X \).
	By the conical nature of \( X \), it is possible to
	select \( V_x \IsIsomorphicTo U_x \times \Cone(L) \)
	\UnskipRef{Argument: local structure of cones}.
	We then reduce to showing that
	\( \upsilon(X) \) is an equivalence
	in the case where \( X = X_a \times \Cone(L) \).

	Let \( p \) be the unique natural number such that
	\( X_a \subset X_p \).
	Since
	\[
		j_\ast j^\ast \Sheaf H (X_a \times \Cone(L))
		\IsCanonicallyIsomorphicTo
		\textstyle
		\varprojlim_{p \leq n < \OrdinalOmega} \Sheaf H_{\leq n}
		(X_a \times \Cone(L_{\leq n}))
	\]
	it shall be enough to see that
	\( \Sheaf H (X_a \times \Cone(L)) \to
	\Sheaf H_{\leq n}(X_a \times \Cone(L_{\leq n})) \)
	is an equivalence for every \( n \geq p \).
	One has a commutative diagram of restriction maps
	\[
		\begin{tikzcd}
			\Sheaf H (X_a \times \Cone(L))
			\ar[rr]
			\ar[rd]
			&& \Sheaf
			H_{\leq n}(X_a \times \Cone(L_{\leq n}))
			\ar[ld]
			\\
			&
			s_a^\ast\Sheaf H(X_a)
			\IsCanonicallyIsomorphicTo
			\Hyperpullback{s_a}\Sheaf H(X_a)
			\UnskipRef{Corollary:
			hyperconstructible = constructible}
			&
		\end{tikzcd}
	\]
	for which both of the vertical arrows are equivalences by
	homotopy invariance
	\UnskipRef{theorem: homotopy invariance of hyperconstructibles},
	We conclude using the two-out-of-three property of equivalences.
\end{proof}

\begin{remark}
	Combining the two dévissage theorems with the coincidence
	theorem in the conical case
	\UnskipRef{Corollary: hyperconstructible = constructible},
	we see that
	for a conically \( A \)-stratified \( \DDO \)\=/space,
	one has the following coincidences
	\[
		\HyperConstructibleHypersheaves A (X)
		\IsCanonicallyIsomorphicTo
		\ConstructibleHypersheaves A (X)
		\IsCanonicallyIsomorphicTo
		\ConstructibleHypersheaves A (X_{< \OrdinalOmega})
		\IsCanonicallyIsomorphicTo
		\ConstructibleSheaves A (X_{<\OrdinalOmega})
	\]
	which can be extended with
	\[
		\HyperConstructibleHypersheaves A (X_{< \OrdinalOmega})
		\IsCanonicallyIsomorphicTo
		\ConstructibleHypersheaves A (X_{< \OrdinalOmega})
	\]
	a coincidence between
	\( A \)\=/hyperconstructible hypersheaves
	on \( X_{< \OrdinalOmega} \)
	and \( A \)\=/constructible hypersheaves on \( X \).
	Contrarily to the conical case, this last equality might
	not happen systematically for colimit \( \DDO \)-spaces.
	However, one can show that this is the case whenever
	the colimit \( \DDO \)-space arises from a
	conical \( \DDO \)-space.
\end{remark}

\section{Representation via exit paths}

In this section, we extend the representation theorem
for constructible sheaves on \( \DD \)-spaces given by Lurie, to
a representation theorem for constructible hypersheaves on
colimit or conical \( \DDO \)-spaces.

\subsection{Exit paths ∞-category}

\begin{definition}
	Let \( p < \OrdinalOmega \), we shall view the topological
	simplex
	\[
		 \GeometricRealisation {\Simplex p}
		 = \left\{ (t_0, \dots, t_p) \in [0,1]^{p+1}
		 : t_0+ \dots + t_p = 1 \right\}
	\]
	as a stratified space over the poset
	\( \{0 < \dots < p\} \) with
	\( \GeometricRealisation {\Simplex p}_{\leq i} \)
	being the set of tuples \( (t_0, \dots, t_i, 0, \dots, 0) \)
	for every \( i \leq p \).

	Given a stratified space \( X \to A \), we let
	\( \Exit A (X) \) denote
	the simplicial set whose \( p \)-simplicies
	are the stratified maps \( \GeometricRealisation {\Simplex p}
	\to X \), for every \( p < \OrdinalOmega \).
\end{definition}

\begin{theorem}[{\cite[A.6.4]{arXiv:0911.0018}}]
	Let X be a conically \( A \)-stratified space.
	Then \( \Exit A (X) \) is an ∞\=/category.
\end{theorem}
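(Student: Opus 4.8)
The plan is to verify the weak Kan (inner horn extension) property directly, reducing it to a purely topological extension problem governed by the conical structure. First I would fix $0 < i < n$ and a map $\sigma_0 \From \Lambda^n_i \to \Exit A (X)$; by definition this is the same datum as a continuous map $f \From \GeometricRealisation{\Lambda^n_i} \to X$ whose restriction to each face is a stratified map, and since all vertices of $\Simplex n$ belong to an inner horn, $f$ is stratified over a single fixed poset map $\phi \From \{0 < \cdots < n\} \to A$. An inner filler is then exactly an extension of $f$ to a stratified map $\bar f \From \GeometricRealisation{\Simplex n} \to X$ over $\phi$.

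The key combinatorial observation, and the place where the inner-horn hypothesis is used, is that for $0 < i < n$ the complement $\GeometricRealisation{\Simplex n} \setminus \GeometricRealisation{\Lambda^n_i}$ lies entirely in the top stratum $\{t_n > 0\}$ of the simplex: every point missing from the horn has $t_k > 0$ for all $k \neq i$, and since $i \neq n$ this forces $t_n > 0$. Consequently the filler $\bar f$ must send the whole region to be filled into the single stratum $X_{\phi(n)}$, while the boundary values prescribed by $f$ along the closure of that region may descend into deeper strata $X_{\phi(j)}$ with $j < n$. The entire difficulty is thus concentrated in extending a map into one stratum across an interface along which the given data enters arbitrarily deep strata; note in particular that a naive stratum-preserving retraction $\GeometricRealisation{\Simplex n} \to \GeometricRealisation{\Lambda^n_i}$ cannot exist, so conicality is genuinely needed.

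I would carry out the extension by induction on $n$, using the local conical charts to control the behaviour near the deep faces. Near a point whose image under $f$ lies in a deep stratum, the reduction arguments \UnskipRef{Argument: local structure of cones} provide a chart $V \IsIsomorphicTo U \times \Cone(L)$; in such a chart an exit path approaching the deep stratum is precisely a path whose radial ($\Cone$) coordinate tends to $0$, and since the cone is contractible towards its cone point such radial behaviour may be prescribed and extended freely. Using these charts together with paracompactness I would extend $f$ over a collar of the deep faces of $\GeometricRealisation{\Simplex n}$ by interpolating the radial coordinate from $0$ along those faces to a positive value in the interior, so that on the region to be filled the map lands in the open stratum $X_{\phi(n)}$ while remaining continuous where it meets $f$. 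The restriction of the problem to the remaining region then takes place inside the single stratum $X_{\phi(n)}$, where, because $\Lambda^n_i \hookrightarrow \Simplex n$ is a topological deformation retract and the region is a cell with prescribed boundary, an ordinary (unstratified) extension inside that stratum completes the construction.

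The main obstacle will be the gluing at the interface: one must choose the radial coordinate in the collar so that the extended map is simultaneously continuous across the strata of $X$ and genuinely stratified over $\phi$ (monotone through the flag of strata), matching $f$ where the data dives into $X_{\phi(j)}$ and the top-stratum filling where it does not, and the conical charts must be patched coherently by a partition of unity. The inner-horn condition is exactly what makes this feasible, since it guarantees that the region being filled stays in the top stratum and hence that the interface analysis only ever moves the radial coordinate away from $0$; for an outer horn the filler would be forced to enter or leave a deep stratum in the interior of the simplex, which the conical local structure does not permit. Carrying out this collar construction uniformly over the deep faces, and verifying that the resulting $\bar f$ is stratified, is the technical heart of the argument.
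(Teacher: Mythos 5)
Your reduction of the filling problem to a stratified extension problem is correctly set up, but the argument has a fatal flaw: nowhere does it actually use the hypothesis \( i > 0 \); every step invokes only \( i \neq n \). The ``key combinatorial observation'' --- that \( \GeometricRealisation{\Simplex n} \setminus \GeometricRealisation{\Lambda^n_i} \) lies in the top stratum \( \{t_n > 0\} \) --- holds just as well for the outer horn \( \Lambda^n_0 \), since it needs only \( i \neq n \); likewise \( \Lambda^n_0 \hookrightarrow \Simplex n \) is also a topological deformation retract, and the collar construction never distinguishes \( i = 0 \) from \( 0 < i \). But the statement is false for \( \Lambda^n_0 \): take \( X = \Cone(Y) \) with \( Y = \{a\} \amalg \{b\} \) two points, stratified over \( \{0 < 1\} \). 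This space is conically stratified, yet the \( \Lambda^2_0 \)-horn given by two exit paths from the cone point into the two different branches has no filler, because a filler would have to map the connected set \( \GeometricRealisation{\Simplex 2} \setminus \{v_0\} \) continuously into the disconnected top stratum \( Y \times (0,\infty) \) while agreeing with both prescribed edges. So any argument that works uniformly in \( i \neq n \) proves too much and cannot be completed. In particular your final step --- an ``ordinary (unstratified) extension inside the stratum \( X_{\phi(n)} \) by a deformation retraction'' --- is exactly where the proof breaks: the obstruction is a genuine homotopy-theoretic one (here \( \PiZero \) of the link, in general the homotopy type of the link), not a gluing subtlety that a more careful collar or partition of unity will repair.

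What \( i > 0 \) really buys, and what the cited proof \cite[A.6.4]{arXiv:0911.0018} actually uses, is that the face \( d_0 \Simplex n \) opposite the initial vertex --- the ``link face'', which records where the simplex exits to --- is contained in \( \Lambda^n_i \) precisely when \( i \neq 0 \), while the missing face \( d_i \Simplex n \) then contains \( v_0 \) and dives into the deep stratum, where the conical structure gives complete freedom by radial rescaling. Lurie's argument proceeds, roughly, by first contracting the simplex linearly towards \( v_0 \) (these contractions are stratified and preserve the horn) and using compactness of \( \GeometricRealisation{\Lambda^n_i} \) to reduce to a single conical chart \( Z \times \Cone(L) \) around the image of \( v_0 \) --- no patching of charts by partitions of unity occurs, and indeed conical charts cannot be patched that way since the cone coordinate is not canonical --- and then constructs the filler explicitly from the join structure \( \Simplex n = \{v_0\} \star \Simplex{\{1,\dots,n\}} \) and radial scaling in the cone coordinate, anchored on the prescribed link face. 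To salvage your write-up you would need to replace the collar interpolation by this mechanism and make explicit where \( i > 0 \) enters; as written, the proposal would equally ``prove'' the false filling statement for outer horns at \( 0 \).
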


\begin{proposition}
	\label{Proposition: Exit of DDO-space}
	Let \( X \to A \) be a stratified space.
	Assume \( A \) be also
	stratified over a filtered poset
	\( \Lambda \) and assume that
	\( X_{\leq \lambda} \) be
	conically \( A_{\leq \lambda} \)-stratified space
	for each \( \lambda \in \Lambda \).
	Then, the simplicial set \( \Exit A (X) \) is an ∞-category and
	the canonical isomorphism of simplicial sets
	\[
		\textstyle
		\varinjlim_{\lambda \in \Lambda}
		\Exit {A_{\leq \lambda}} (X_{\leq \lambda})
		\IsCanonicallyIsomorphicTo
		\Exit A (X)
	\]
	exhibits \( \Exit A (X) \) as a colimit of the diagram
	\( \{ \Exit {A_{\leq \lambda}}
	(X_{\leq \lambda})\}_{\lambda \in \Lambda} \)
	in the ∞-category of ∞-categories.
\end{proposition}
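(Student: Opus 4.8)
The plan is to exploit that every simplex of $\Exit A (X)$ meets only finitely many strata, so that the filtered hypothesis on $\Lambda$ forces each simplex to descend to a bounded stage. First I would analyse the canonical map in each simplicial degree. A $p$-simplex of $\Exit A (X)$ is a stratified map $\GeometricRealisation{\Simplex p} \to X$, and by definition of a stratified morphism such a map covers a poset map $\{0 < \cdots < p\} \to A$ whose image is a finite chain $a_0 \leq \cdots \leq a_p$ in $A$. Composing with the structure map $A \to \Lambda$ produces a finite subset of $\Lambda$, which by filteredness admits an upper bound $\lambda^\ast$; hence every point of $\GeometricRealisation{\Simplex p}$ lands in some stratum $X_{a_i} \subset X_{\leq \lambda^\ast}$, and the simplex factors as an $A_{\leq \lambda^\ast}$-stratified map into $X_{\leq \lambda^\ast}$, that is, it arises from $\Exit{A_{\leq \lambda^\ast}}(X_{\leq \lambda^\ast})$. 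This yields surjectivity in degree $p$, while injectivity is automatic since each $X_{\leq \lambda} \hookrightarrow X$ is a subspace inclusion, so the transition maps $\Exit{A_{\leq \lambda}}(X_{\leq \lambda}) \to \Exit{A_{\leq \mu}}(X_{\leq \mu})$ are injective. As these bijections are compatible with faces and degeneracies, the canonical map
\[
	\varinjlim_{\lambda \in \Lambda} \Exit{A_{\leq \lambda}}(X_{\leq \lambda})
	\longrightarrow \Exit A (X)
\]
is a degreewise bijection, hence an isomorphism of simplicial sets.

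Granting this presentation, I would next verify that $\Exit A (X)$ is an ∞-category. Each $X_{\leq \lambda}$ is conically $A_{\leq \lambda}$-stratified by hypothesis, so $\Exit{A_{\leq \lambda}}(X_{\leq \lambda})$ is an ∞-category \cite[A.6.4]{arXiv:0911.0018}. Given an inner horn $\Lambda^n_k \to \Exit A (X)$ (with $0 < k < n$), its source is a finite, hence compact, object of $\SSets$; because $\Exit A (X)$ is a filtered colimit, the horn factors through some $\Exit{A_{\leq \lambda}}(X_{\leq \lambda})$, where a filler exists, and the image of that filler solves the original lifting problem. Thus all inner horns fill and $\Exit A (X)$ is an ∞-category.

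The hard part is to upgrade the isomorphism of simplicial sets to the statement that $\Exit A (X)$ is the colimit of the diagram $\{\Exit{A_{\leq \lambda}}(X_{\leq \lambda})\}_{\lambda \in \Lambda}$ in $\mathsf{Cat}_\infty$ and not merely in $\SSets$, since a strict colimit of fibrant objects need not compute the ∞-categorical colimit. Here I would invoke the standard fact that filtered colimits in $\SSets$ with the Joyal model structure are homotopy colimits: categorical equivalences are detected by finite data and are therefore stable under filtered colimits, so the colimit functor over the filtered poset $\Lambda$ preserves Joyal weak equivalences. Consequently the filtered colimit of the ∞-categories $\Exit{A_{\leq \lambda}}(X_{\leq \lambda})$ — which the first step identifies with $\Exit A (X)$ — also models their colimit in $\mathsf{Cat}_\infty$. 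Concretely, for any ∞-category $\Category D$ the cartesian closedness of $\SSets$ gives a natural identification $\Fun(\Exit A (X), \Category D) \cong \varprojlim_{\lambda} \Fun(\Exit{A_{\leq \lambda}}(X_{\leq \lambda}), \Category D)$, and the cofibrations $\Exit{A_{\leq \lambda}}(X_{\leq \lambda}) \hookrightarrow \Exit{A_{\leq \mu}}(X_{\leq \mu})$ make the transition maps categorical fibrations, so that passing to cores exhibits the mapping space out of $\Exit A (X)$ as the homotopy limit of the mapping spaces out of the stages — exactly the universal property of the colimit.
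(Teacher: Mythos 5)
Your first two steps are essentially the paper's own argument: the degreewise factorisation of each simplex through some $X_{\leq \lambda^\ast}$ via filteredness of $\Lambda$, and inner-horn filling by compactness of horns together with Lurie's theorem applied to the conically stratified stages. Where you diverge is the upgrade from the strict isomorphism of simplicial sets to the colimit statement in $\mathsf{Cat}_\infty$. The paper transfers this question to \emph{marked} simplicial sets precisely because that model category is simplicial (HTT 3.1.4.4), so that homotopy colimits there are known to compute $\infty$-categorical colimits (HTT 3.1.4.1 and 4.2.4.1), and then quotes Dugger's criterion (generating cofibrations with $\OrdinalOmega$-small domains and codomains) to identify the filtered colimit with a homotopy colimit. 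You instead stay in the Joyal model structure and argue that categorical equivalences are stable under filtered colimits; that fact is true, and since every object is cofibrant it does make the strict filtered colimit a homotopy colimit in the Joyal model structure --- but it deserves an actual citation (it is proved, for instance, in Cisinski's book on higher categories) rather than the slogan that equivalences are ``detected by finite data'', which is not by itself an argument.

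The genuine gap is in your closing verification. The Joyal model structure is \emph{not} a simplicial model category --- $\Fun(K, \Category D)$ is a quasicategory, not a Kan complex --- and this is exactly why the paper routes through marked simplicial sets, whose simplicial mapping spaces are the cores $\Fun(K, \Category D)^\simeq$ you need. Consequently, ``homotopy colimit in the Joyal model structure implies colimit in $\mathsf{Cat}_\infty$'' cannot be extracted from the simplicial-model-category results in HTT; it requires the general comparison between homotopy colimits in a combinatorial model category and colimits in its underlying $\infty$-category (Lurie's Higher Algebra, \S 1.3.4), which you never invoke. Your concrete substitute --- ``passing to cores exhibits the mapping space as the homotopy limit of the stages'' --- does not hold for a general filtered poset $\Lambda$: the transition maps of $\{\Fun(\Exit {A_{\leq \lambda}}(X_{\leq \lambda}), \Category D)^\simeq\}_{\lambda}$ are indeed Kan fibrations, but a strict inverse limit of Kan fibrations over an arbitrary cofiltered poset is not automatically a homotopy limit; levelwise fibrations yield Reedy fibrancy only when the index is a tower, and over larger index posets higher derived limits obstruct the identification. (The commutation of cores with the strict limit itself is fine, since natural equivalences are detected pointwise and every vertex of $\Exit A (X)$ lies in some stage.) As written, then, your argument establishes the proposition for $\Lambda = \OrdinalOmega$ --- the only case the paper later uses --- but not for general filtered $\Lambda$ as stated. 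To close the gap, either invoke the underlying-$\infty$-category comparison and delete the concrete verification, or follow the paper into marked simplicial sets, where Dugger's criterion and the simplicial-model-category machinery apply verbatim.
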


\begin{proof}
	We notice that
	\( \Exit {A_{\leq \lambda}} (X_{\leq \lambda})
	\to \Exit A (X) \)
	is a monomorphism for every \( \lambda \in \Lambda \),
	thus the
	canonical map \( \varinjlim_{\lambda \in \Lambda}
	\Exit {A_{\leq \lambda}} (X_{\leq \lambda}) \to \Exit A (X) \)
	is also a monomorphism.
	It is an isomorphism because, as \( \Lambda \) is filtered,
	any map of posets 
	\( \{0 < \dots < n\} \to A \) must factor
	trough \( A_{\leq \lambda} \) for some \( \lambda \in \Lambda \)
	and by definition,
	an \( n \)\=/simplex of \( \Exit A (X) \) is
	a stratified map
	\( \GeometricRealisation {\Simplex n} \to X \), which must
	then factor through \( X_{\leq \lambda} \) for some
	\( \lambda \in \Lambda \).

	To show that \( \Exit A (X) \) is an ∞-category, we need
	to check that it has the right lifting property
	\[
		\begin{tikzcd}[ampersand replacement=\&,
			column sep = huge]
			\Lambdaup^n_i
			\arrow[r, ""]
			\arrow[d, hook]
			\& \Exit A (X) \IsCanonicallyIsomorphicTo
			\varinjlim_{\lambda \in \Lambda}
			\Exit {A_{\leq \lambda}} (X_{\leq \lambda})
			\arrow[d, ""] \\
			\Simplex n
			\ar[ur, dashed]
			\arrow[r, "", swap]
			\& \ast
		\end{tikzcd}
	\]
	with respect to inner horns inclusions.
	Because inner horns \( \Lambdaup^n_i \)
	are finite simplicial set
	for every \( n < \OrdinalOmega \) and every \( 0 < i < n \),
	and \( \Lambda \) is filtered,
	any map \( \Lambdaup^n_i \to \Exit A (X) \) must factor
	through \( \Exit {A_{\leq \lambda}} (X_{\leq \lambda}) \)
	for some \( \lambda \in \Lambda \).
	By assumption \( X_{\leq \lambda} \)
	is conically \( A_{\leq \lambda} \)-stratified and a lift to
	a map
	\( \Simplex n \to \Exit {A_{\leq \lambda}} (X_{\leq \lambda})
	\subset \Exit A (X) \) exists by the previous theorem.

	Since marked simplicial sets
	form a simplicial model category
	\cite[3.1.4.4]{doi:10.1515/9781400830558},
	in order to show that \( \Exit A (X) \) is a colimit of
	the diagram 
	\( \{ \Exit {A_{\leq n}}
	(X_{\leq \lambda})\}_{\lambda \in \Lambda} \)
	in the ∞-category of ∞-categories, it will be enough to
	show that \( \Exit A (X)^\natural \) is a homotopy colimit
	of the diagram
	\( \{ \Exit {A_{\leq \lambda}}
	(X_{\leq \lambda})^\natural\}_{\lambda \in \Lambda} \)
	\cite[3.1.4.1 \& 4.2.4.1]{doi:10.1515/9781400830558}.
	This is the case since
	the model category of marked simplicial sets
	admits cofibrant generators with \( \OrdinalOmega \)-small
	domains and codomains (the marked simplicial maps, with
	underlying simplicial map
	\( \partial \Simplex n \subset \Simplex n \)),
	and filtered colimits in such
	cases coincide with homotopy colimits
	\cite[7.3]{doi:10.1006/aima.2001.2015}.
\end{proof}

\begin{corollary}
	\label{Corollary: Exit paths = Exit paths of the colimit}
	For any \( \DDO \)-space \( X \), one has an equality
	\[
		 \Exit A(X) \IsCanonicallyIsomorphicTo
		 \Exit A(X_{< \OrdinalOmega}) 
	\]
	between the exit paths ∞-categories of \( X \) and
	\( X_{< \OrdinalOmega} \).
\end{corollary}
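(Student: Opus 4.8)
The plan is to recognise both $\Exit A(X)$ and $\Exit A(X_{<\OrdinalOmega})$ as one and the same filtered colimit of the exit paths ∞-categories of the truncations $X_{\leq n}$, and then to conclude by the uniqueness of colimits. The workhorse is Proposition~\ref{Proposition: Exit of DDO-space}, applied with the filtered indexing poset $\Lambda = \OrdinalOmega$.

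First I would verify the hypotheses of that proposition for $X$ itself. The poset $\OrdinalOmega$ is totally ordered, hence filtered, and $A$ is $\OrdinalOmega$-stratified by the definition of a $\DDO$-space. For each $n < \OrdinalOmega$ the truncation $X_{\leq n} \to A_{\leq n}$ is a $\DD$-space, and in particular it is conically $A_{\leq n}$-stratified. The proposition therefore applies and exhibits
\[
	\Exit A(X) \IsCanonicallyIsomorphicTo \varinjlim_{n < \OrdinalOmega} \Exit{A_{\leq n}}(X_{\leq n})
\]
as a colimit in the ∞-category of ∞-categories.

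I would then run the identical argument for the colimit space. By Lemma~\ref{Lemma: colimit endofunctor}, $X_{<\OrdinalOmega} \to A \to \OrdinalOmega$ is again a $\DDO$-space, so the very same proposition yields
\[
	\Exit A(X_{<\OrdinalOmega}) \IsCanonicallyIsomorphicTo \varinjlim_{n < \OrdinalOmega} \Exit{A_{\leq n}}\bigl((X_{<\OrdinalOmega})_{\leq n}\bigr).
\]
Comparing the two presentations, the corollary reduces entirely to the identification $(X_{<\OrdinalOmega})_{\leq n} = X_{\leq n}$ of $A_{\leq n}$-stratified spaces: once the two indexing diagrams agree term by term, their colimits are canonically isomorphic and the two exit paths ∞-categories coincide.

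The single delicate point, which I expect to be the \emph{main obstacle}, is precisely this identification of truncations, since it is here that the change of global topology between $X$ and $X_{<\OrdinalOmega}$ must be shown to be invisible. I would argue it in two interchangeable ways. Topologically, the preimage of $A_{\leq n}$ stabilises the defining tower: for $m \geq n$ one has $(X_{\leq m})_{\leq n} = X_{\leq n}$ with its ambient topology, so $(X_{<\OrdinalOmega})_{\leq n} = \varinjlim_m (X_{\leq m})_{\leq n}$ is a colimit of an eventually constant sequence and hence returns $X_{\leq n}$ unchanged. Simplicially, a stratified map $\GeometricRealisation{\Simplex p} \to X_{<\OrdinalOmega}$ has compact source and so factors through some stage $X_{\leq n}$ by Remark~\ref{Remark: converging sequence in colimit topology}, while a stratified map into the original $X$ factors through some $X_{\leq n}$ because its composite to the finite poset $\{0 < \cdots < p\}$ has bounded image in $\OrdinalOmega$; in both cases the restricted map lands in the same topological space $X_{\leq n}$, so the two simplicial sets have literally identical sets of simplices. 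Either way the identification holds and the proof concludes.
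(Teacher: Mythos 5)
Your proof is correct and follows essentially the same route as the paper, which states this as an immediate consequence of Proposition~\ref{Proposition: Exit of DDO-space}: both exit-path simplicial sets are exhibited as the same colimit $\varinjlim_{n<\OrdinalOmega}\Exit{A_{\leq n}}(X_{\leq n})$, using that $X$ and $X_{<\OrdinalOmega}$ share the same truncations. Your explicit verification that $(X_{<\OrdinalOmega})_{\leq n} = X_{\leq n}$ as stratified spaces is exactly the point the paper leaves implicit (it is also what underlies Lemma~\ref{Lemma: colimit endofunctor}), and both of your arguments for it are sound.
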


In other words,
the exit paths ∞-category does not depend on the global
topology of the space.

\subsection{Representation theorem}

Let us recall how the ∞-category
\( \Fun(\Exit A (X), \Spaces) \) represents
\( A \)-constructible sheaves in the case where \( X \) is a
\( \DD \)-space \cite[A.10]{arXiv:0911.0018}.
First the ∞-category of functors
\( \Fun(\Exit A (X), \Spaces) \) can be replaced
by \( \Nerve (\SliceExit_X^\circ) \), the ∞-category associated
to the simplicially enriched category of fibrant-cofibrant objects
of the category
\( \Slice {\SSets} {\Exit A (X)} \) endowed with the covariant
model structure, via
\[
	\begin{tikzcd}
		\Fun(\Exit A (X), \Spaces) 	
		&
		\lar
		\Nerve\left(
		\left(
			\SSets^{\CC[\Exit A (X)]}
		\right)^\circ\right)
		\rar
		&
		\Nerve\left(\SliceExit_X^\circ\right)
	\end{tikzcd}
\]
a chain of equivalences
\cite[2.2.1.2 \& 4.2.4.4]{doi:10.1515/9781400830558},
where the left functor is a forgetful functor and the right
functor is the unstraightening functor.
One can then define a functor
\( \Opens X \Op \times \SliceExit_X^\circ
\to \SSets^\circ \)
\[
	(U,Y) \longmapsto
	\Fun_{\Exit A (X)}(\Exit A (U), Y)
\]
which will induce a functor
\[
	\begin{tikzcd}
		\Nerve\left(\SliceExit_X^\circ\right)
		\rar["\ConstructibleRealisation_X"]
			& \Presheaves(X)
	\end{tikzcd}
\]
with values in the ∞-category of presheaves on \( X \).

\begin{theorem}[{\cite[A.10.5, A.10.10 \& A.10.3]{arXiv:0911.0018}}]
	\label{Theorem: representation for D-spaces}
	Let \( X \to A \) be a \( \DD \)-space.
	The functor
	\( \ConstructibleRealisation_X
	\From \Nerve(\SliceExit_X^\circ) \to \Presheaves(X) \)
	is fully faithful and its image is
	equivalent to the subcategory of
	\( A \)-constructible sheaves on \( X \).
\end{theorem}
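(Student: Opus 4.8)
The plan is to follow Lurie's inductive argument; the hypotheses defining a \( \DD \)-space are precisely what make each step go through. There are three things to establish: that \( \ConstructibleRealisation_X \) factors through \( \ConstructibleSheaves A (X) \subset \Presheaves(X) \) (in particular that its values are sheaves, and that they are constructible), that \( \ConstructibleRealisation_X \) is fully faithful, and that its essential image is all of \( \ConstructibleSheaves A (X) \). Each of these is local on \( X \): by construction \( \ConstructibleRealisation_X \) is compatible with restriction along open inclusions, while constructible sheaves are detected and glued along open covers, so it suffices to verify the three claims on a basis of opens. The reduction arguments for conically stratified spaces \UnskipRef{Argument: local structure of cones} supply such a basis consisting of conical charts \( U \times \Cone(L) \), stratified over \( A_{a \leq} \cong A_{a<}^\triangleleft \), so all subsequent work takes place on one such chart.

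Next I would set up the induction on the depth of the stratification, which is defined precisely because \( A \) satisfies the ascending chain condition; this is the sole role of that hypothesis. The base case is a single stratum, where the poset is trivial and \( X \) is merely locally of singular shape. Here every singular simplex is automatically stratified, so \( \Exit A (X) = \Sing(X) \), and \( \ConstructibleRealisation_X \) recovers the functor \( \LocallyConstantRealisation_X \From \Slice \Spaces {\Sing(X)} \to \Sh(X) \), which is already known to be fully faithful with essential image the locally constant sheaves \cite[A.2.15]{arXiv:0911.0018}. Combined with the coincidence \UnskipRef{proposition: coincidence}, this settles the base case outright.

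The inductive step over the chart \( U \times \Cone(L) \) is where the real work lies. On the sheaf side, restriction to the open complement of the cone-point stratum and to that closed stratum exhibits \( \ConstructibleSheaves {A_{a \leq}} (U \times \Cone(L)) \) as a recollement: a gluing of constructible sheaves on \( U \times (\Reals_+^\ast \times L) \) and locally constant sheaves on the stratum \( U \), along a specialization functor. On the exit-path side, the cone point contributes a new initial stratum, so \( \Exit {A_{a \leq}} (U \times \Cone(L)) \) is built from \( \Sing(U) \) and \( \Exit {A_{a<}} (U \times (\Reals_+^\ast \times L)) \) by a corresponding gluing; since \( L \) is stratified with strictly smaller depth, the inductive hypothesis applies to its open part. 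The crux is to match these two decompositions: one must check that unstraightening over \( \Exit {A_{a \leq}} (U \times \Cone(L)) \) carries the combinatorial gluing of the functor categories to the sheaf-theoretic recollement, compatibly with the specialization and monodromy maps. I expect this identification of gluing data — rather than the descent bookkeeping of the first step or the base case of the second — to be the main obstacle, because it requires tracking the homotopy coherence of the unstraightening equivalence across the open stratum and the closed stratum simultaneously. Once it is in place, full faithfulness and essential surjectivity on the chart follow by two-out-of-three from the inductive hypothesis on the open part and the base case on the closed stratum, and the local statements glue to the global one by the descent reduction carried out at the start.
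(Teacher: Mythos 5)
You should know at the outset that the paper contains no proof of this statement: the bracketed citation \emph{is} the proof. The theorem is Lurie's, imported as a black box from \cite{arXiv:0911.0018}, and everything the paper itself establishes begins downstream of it, in the extension to \( \DDO \)-spaces. So the only meaningful comparison is with Lurie's own argument, and in outline your sketch does track it: reduction to a basis of conical charts, an induction made available by the ascending chain condition, a base case consisting of a single stratum --- where \( \ConstructibleRealisation_X \) reduces to \( \LocallyConstantRealisation_X \) and the statement is \cite[A.2.15]{arXiv:0911.0018} combined with the coincidence result\UnskipRef{proposition: coincidence} --- and an inductive step on a chart \( U \times \Cone(L) \) comparing the open/closed recollement of constructible sheaves with a corresponding decomposition of \( \Fun(\Exit A (U \times \Cone(L)), \Spaces) \).

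Two elisions keep your proposal at the level of a roadmap rather than a proof. First, the opening claim that all three assertions are local ``by construction'' is too quick: compatibility of \( \ConstructibleRealisation_X \) with restriction and descent for sheaves handle only one side of the comparison. To check full faithfulness and essential surjectivity on a basis you also need descent for the functor categories, i.e.\ that \( \Exit A (X) \) is the colimit, in the \( \infty \)-category of \( \infty \)-categories, of \( \Exit A (U) \) over a covering sieve. This is Lurie's Seifert--van Kampen theorem for exit paths, a substantial separate ingredient that your sketch never names; without it the reduction to conical charts does not get off the ground. Second, the step you yourself single out as the crux --- that unstraightening carries the combinatorial gluing over the cone point to the sheaf-theoretic recollement, compatibly with the specialization functor --- is precisely where the bulk of Lurie's section A.10 (together with his recollement formalism in the preceding section of that appendix) is spent; you locate it correctly but defer it. Neither point affects the paper, which uses the theorem as a citation, but as a blind reconstruction your text is an accurate outline of the cited argument with its two hardest inputs left open.
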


We shall now extend this theorem to the case where \( X \) is a
\( \DDO \)-space.

\begin{lemma}
	\label{Lemma: Psi is right adjoint}
	Let \( X \) be an \( A \)-stratified space, then
	\( \Nerve(\SliceExit_X^\circ) \) is a presentable
	∞\=/category and the functor
	\( \ConstructibleRealisation_X \) is a right adjoint.
\end{lemma}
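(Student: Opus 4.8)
The plan is to reduce presentability to a statement about a functor $\infty$-category, and then to recognise $\ConstructibleRealisation_X$ as the right adjoint in a nerve--realisation adjunction, so that no explicit construction of the left adjoint is needed beyond invoking the universal property of presheaves.

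For presentability, I would transport the question along the chain of equivalences recalled above, which identifies $\Nerve(\SliceExit_X^\circ)$ with $\Fun(\Exit A (X), \Spaces)$. The simplicial set $\Exit A (X)$ is small, being a sub-simplicial set of the singular complex of $X$, and $\Spaces$ is presentable; hence $\Fun(\Exit A (X), \Spaces)$ is presentable \cite[5.5.3.6]{doi:10.1515/9781400830558}. In particular $\Nerve(\SliceExit_X^\circ)$ admits all small colimits, which is the only feature of it needed in the second half.

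To show that $\ConstructibleRealisation_X$ is a right adjoint, I would exhibit it as the singular functor of a functor out of $\Opens X$. Here $\Presheaves(X) = \Fun(\Opens X \Op, \Spaces)$ is the $\infty$-category of presheaves on the small poset $\Opens X$ of opens of $X$. The assignment $U \mapsto \Exit A (U)$, together with the inclusions $\Exit A (U) \subset \Exit A (V)$ induced by $U \subset V$, defines a functor
\[
	f \colon \Opens X \longrightarrow \Nerve(\SliceExit_X^\circ),
	\qquad U \longmapsto \Exit A (U),
\]
where each $\Exit A (U)$, regarded as an object of $\Slice \SSets {\Exit A (X)}$ over $\Exit A (X)$, is replaced by a fibrant model; since every object of the covariant model structure is cofibrant, this causes no difficulty. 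The key observation is then that for a fibrant--cofibrant $Y$ and the cofibrant object $\Exit A (U)$, the simplicial mapping space $\Fun_{\Exit A (X)}(\Exit A (U), Y)$ already computes the mapping space of the $\infty$-category $\Nerve(\SliceExit_X^\circ)$, so that
\[
	\ConstructibleRealisation_X(Y)(U)
	\simeq
	\Map_{\Nerve(\SliceExit_X^\circ)}(f(U), Y).
\]

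With this identification in hand, $\ConstructibleRealisation_X$ is precisely the functor $Y \mapsto \Map_{\Nerve(\SliceExit_X^\circ)}(f(-), Y)$ attached to $f$ by the nerve--realisation paradigm. Since $\Opens X$ is a small $\infty$-category and $\Nerve(\SliceExit_X^\circ)$ admits small colimits, this functor admits a left adjoint, namely the unique colimit-preserving extension of $f$ along the Yoneda embedding \cite[5.1.5.6]{doi:10.1515/9781400830558}; consequently $\ConstructibleRealisation_X$ is a right adjoint. The main obstacle is the middle identification of mapping spaces: one must verify both that $\Fun_{\Exit A (X)}(\Exit A (U), Y)$ is the derived mapping space, which holds because the source is cofibrant and the target fibrant, and that this identification is natural in $U$, so that $\ConstructibleRealisation_X(Y)$ agrees as a presheaf with the singular functor $\Map_{\Nerve(\SliceExit_X^\circ)}(f(-), Y)$.
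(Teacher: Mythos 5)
Your proposal is correct in substance but takes a genuinely different route from the paper's, in both halves. For presentability, the paper does not pass through \( \Fun(\Exit A (X), \Spaces) \): it observes that \( \SliceExit_X \) is a combinatorial simplicial model category \cite[2.1.4.6]{doi:10.1515/9781400830558}, so that its associated ∞-category \( \Nerve(\SliceExit_X^\circ) \) is presentable \cite[A.3.7.6]{doi:10.1515/9781400830558}; your transport along the straightening/unstraightening chain is equally valid, since that chain of equivalences holds over an arbitrary base simplicial set (no conicality of \( X \) is needed, which matters because the lemma assumes none). For the adjoint, the paper applies the adjoint functor theorem \cite[5.5.2.9]{doi:10.1515/9781400830558} directly to \( \ConstructibleRealisation_X \): continuity because each \( Y \mapsto \Fun_{\Exit A (X)}(\Exit A (U), Y) \) preserves homotopy limits (\( \Exit A (U) \) cofibrant, \( Y \) fibrant) and limits of presheaves are computed pointwise, and accessibility by choosing a cardinal \( \kappa \) for which \( \kappa \)-filtered colimits in \( \SliceExit_X \) are homotopy colimits and every \( \Exit A (U) \) is \( \kappa \)-small. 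Your nerve--realisation argument replaces this cardinality bookkeeping with the identification of \( \ConstructibleRealisation_X \) as the nerve functor of \( U \mapsto \Exit A (U) \), and it buys something the paper's proof does not give: an explicit description of the left adjoint as the colimit-preserving extension of \( U \mapsto \Exit A (U) \) along the Yoneda embedding, rather than a pure existence statement.

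Two deferred steps in your sketch deserve flagging. First, the naturality in \( U \) of the equivalence \( \Fun_{\Exit A (X)}(\Exit A (U), Y) \IsEquivalentTo \Map_{\Nerve(\SliceExit_X^\circ)}(f(U), Y) \), which you rightly single out, is where the real work lies: one needs a functorial fibrant replacement (available since \( \SliceExit_X \) is combinatorial) and the compatibility of simplicial mapping spaces with the ∞-categorical Yoneda embedding; this is standard but not free, and it is precisely the fuss the paper's argument avoids by never leaving the simplicial construction of \( \ConstructibleRealisation_X \). Second, \cite[5.1.5.6]{doi:10.1515/9781400830558} only produces the colimit-preserving extension \( F \) of \( f \); the assertion that \( F \) is left adjoint to the nerve functor \( Y \mapsto \Map(f(-), Y) \) is a further (standard) fact not contained in that citation. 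The quickest repair, given that you have already established presentability of both sides, is to apply the adjoint functor theorem to \( F \) --- whose colimit-preservation is automatic --- and identify its right adjoint with your nerve functor by evaluating on representables. So your route does not actually avoid the adjoint functor theorem; it shifts its application to a functor whose hypotheses hold for free, trading the paper's accessibility estimate for the naturality verification above.
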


\begin{proof}
	The simplicial model category \( \SliceExit_X \) is
	combinatorial \cite[2.1.4.6]{doi:10.1515/9781400830558},
	so its associated
	∞-category is presentable
	\cite[A.3.7.6]{doi:10.1515/9781400830558}.

	In view of the adjoint functor theorem
	\cite[5.5.2.9]{doi:10.1515/9781400830558}, it shall
	be enough to show that \( \ConstructibleRealisation_X \)
	is both continuous and accessible.

	For every open subset \( U \subset X \),
	the functor
	\( Y \mapsto \Fun_{\Exit A (X)}(\Exit A (U), Y) \)
	from \( \SliceExit_X^\circ \) to \( \SSets^\circ \)
	preserves homotopy limits since \( \SliceExit_X \) is
	a simplicial model category
	and \( \Exit A (U) \) is cofibrant (as all objects are).
	As a consequence the functor
	\( Y \mapsto \ConstructibleRealisation_X(Y)(U) \) preserves
	all small limits
	\cite[4.2.4.1 \& 4.2.3.14]{doi:10.1515/9781400830558}.
	It follows that \( \ConstructibleRealisation_X \) preserves
	all small limits because limits in presheaves ∞-categories
	are computed pointwise
	\cite[5.1.2.3]{doi:10.1515/9781400830558}.

	Using the same arguments, to prove that
	\( \ConstructibleRealisation_X \) is accessible, it
	shall be enough to find a cardinal \( \kappa \)
	such that the functors
	\( Y \mapsto \ConstructibleRealisation_X(Y)(U) \)
	commute with \( \kappa \)\=/filtered homotopy colimits
	for every \( U \subset X \).
	Since \( \SliceExit_X \) is combinatorial,
	there exists a cardinal \( \kappa \) such that
	all \( \kappa \)-filtered colimits be
	homotopy colimits
	\cite[7.3]{doi:10.1006/aima.2001.2015}.
	Enlarging \( \kappa \) if necessary, we can
	also demand that
	\( \Exit A (U) \) be \( \kappa \)-small
	for every open subset \( U \subset X \).
	As a consequence,
	\( Y \mapsto \ConstructibleRealisation_X(Y)(U) \)
	commutes with all (homotopy)
	\( \kappa \)-filtered colimits
	for every open \( U \subset X \).
\end{proof}

\begin{lemma}
	\label{Lemma: j(n)^* j(n)_* adjunction}
	Let \( X \) be a \( \DDO \)-space.
	For each \( n < \OrdinalOmega \), denote by
	\( j(n) \) the inclusion of \( X_{\leq n} \)
	into \( X \).
	One has a bireflexive simplicial model localisation
	\[
		\begin{tikzcd}[ampersand replacement=\&, column sep = huge]
			\SliceExit_X
			\arrow[r,"j(n)^\ast" description]
			\&
			\SliceExit_{X_{\leq n}}
			\arrow[l, shift right=3, "j(n)_!" swap, hook]
			\arrow[l, shift left=3, "j(n)_\ast", hook']
		\end{tikzcd}
	\]
	where \( j(n)_! \) is the forgetful functor, where
	\[
		j(n)^\ast(Y) \coloneqq
		\Exit {A_{\leq n}} (X_{\leq n})
		\times_{\Exit A (X)} Y
	\]
	and where both categories are endowed with the covariant model
	structure.
\end{lemma}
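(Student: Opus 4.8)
The whole statement is governed by the map of exit paths ∞-categories
\( \iota_n \From \Exit{A_{\leq n}} (X_{\leq n}) \to \Exit A (X) \)
induced by \( j(n) \), so the plan is to reduce every assertion to a property of \( \iota_n \). First I would record what kind of map \( \iota_n \) is. By the proof of \UnskipRef{Proposition: Exit of DDO-space} it is a monomorphism; more precisely, since \( A_{\leq n} \subset A \) is downwards closed, a stratified simplex \( \GeometricRealisation{\Simplex p} \to X \) lands in \( X_{\leq n} \) as soon as its final vertex does, so \( \Exit{A_{\leq n}}(X_{\leq n}) \) is the full simplicial subset of \( \Exit A (X) \) spanned by the vertices of level \( \leq n \), and it is moreover a sieve (closed under sources, as a morphism into a stratum of level \( \leq n \) can only issue from a stratum of level \( \leq n \)). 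The three functors in the statement are then exactly the base-change triple of \( \iota_n \): the forgetful functor \( j(n)_! = (\iota_n)_! \), the pullback \( j(n)^\ast = \iota_n^\ast \), and its right adjoint \( j(n)_\ast = (\iota_n)_\ast \), which exists because \( \SSets \) is a topos and hence its slices are locally cartesian closed. At the \( 1 \)-categorical level one thus already has \( (\iota_n)_! \IsAdjointTo \iota_n^\ast \IsAdjointTo (\iota_n)_\ast \).

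Next I would upgrade these to simplicial Quillen adjunctions for the covariant model structures, which is where the only real work lies. The pair \( \big((\iota_n)_!, \iota_n^\ast\big) \) is a simplicial Quillen adjunction by \cite[2.1.4.10]{doi:10.1515/9781400830558}, so \( \iota_n^\ast \) is right Quillen. The main obstacle is to show that \( \iota_n^\ast \) is \emph{also} left Quillen, i.e.\ that \( \big(\iota_n^\ast, (\iota_n)_\ast\big) \) is a Quillen adjunction. Preservation of cofibrations is automatic, since pullback preserves monomorphisms; the genuine point is preservation of covariant equivalences. For this I would exhibit \( \iota_n \) as a base change of a smooth map: the sieve \( \Exit{A_{\leq n}}(X_{\leq n}) \) is classified by the functor
\[
	\chi_n \From \Exit A (X) \longrightarrow \Nerve(A)
	\longrightarrow \Nerve(\OrdinalOmega) \longrightarrow \Simplex 1
\]
induced by the poset map \( \OrdinalOmega \to \{0 < 1\} \) sending \( k \) to \( 0 \) if \( k \leq n \) and to \( 1 \) otherwise, so that \( \Exit{A_{\leq n}}(X_{\leq n}) = \Exit A (X) \times_{\Simplex 1} \{0\} \). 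Since the inclusion of the initial vertex \( \{0\} \subset \Simplex 1 \) is smooth and smooth maps are stable under base change \cite[\S\,4.1.2]{doi:10.1515/9781400830558}, the map \( \iota_n \) is smooth, whence \( \iota_n^\ast \) preserves covariant equivalences. As the functors are induced by a map of simplicial sets and by (co)base change in \( \SSets \), hence \( \SSets \)-enriched, this makes \( \big(\iota_n^\ast, (\iota_n)_\ast\big) \) a \emph{simplicial} Quillen adjunction.

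It then remains to check bireflexivity. Because \( \iota_n \) is a monomorphism, base change along \( \iota_n \) of a map factoring through \( \iota_n \) is an isomorphism, so the unit \( \id \to \iota_n^\ast (\iota_n)_! \) and the counit \( \iota_n^\ast (\iota_n)_\ast \to \id \) are isomorphisms on the nose; thus both \( (\iota_n)_! \) and \( (\iota_n)_\ast \) are fully faithful. Since these strict identities are preserved upon deriving, the left and right derived adjoints \( \mathbf L (\iota_n)_! \) and \( \mathbf R (\iota_n)_\ast \) of \( \iota_n^\ast \) are again fully faithful, which is precisely the assertion that \( \iota_n^\ast \) is a bireflexive simplicial model localisation. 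The one step I expect to require care is the left-Quillen property of \( \iota_n^\ast \); everything else is formal once \( \iota_n \) has been identified as a sieve inclusion and hence as a smooth monomorphism.
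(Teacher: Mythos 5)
Your overall strategy is the same as the paper's: set up the adjoint triple \( j(n)_! \dashv j(n)^\ast \dashv j(n)_\ast \), get the Quillen pair \( (j(n)_!, j(n)^\ast) \) from \cite[2.1.4.10]{doi:10.1515/9781400830558}, isolate a fibration-type property of the inclusion \( \iota_n \From \Exit {A_{\leq n}} (X_{\leq n}) \to \Exit A (X) \) making \( \iota_n^\ast \) also left Quillen, and deduce bireflexivity formally from \( \iota_n \) being a full monomorphism. Your identification of the image of \( \iota_n \) as a sieve, classified by a map to \( \Simplex 1 \) with \( \Exit {A_{\leq n}} (X_{\leq n}) = \Exit A (X) \times_{\Simplex 1} \{0\} \), is correct and is a faithful repackaging of the paper's observation that an exit path ending in \( X_{\leq n} \) must lie entirely in \( X_{\leq n} \).

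There is, however, a duality error at exactly the step you yourself flag as the crux. In the terminology of the reference you cite, \( \{0\} \subset \Simplex 1 \) is \emph{not} smooth: it is a \emph{right} fibration, hence \emph{proper}, whereas smoothness is the dual property enjoyed by left fibrations such as the cosieve \( \{1\} \subset \Simplex 1 \) \cite[\S\,4.1.2]{doi:10.1515/9781400830558}. Moreover, smoothness is the wrong hypothesis for your conclusion: pullback along smooth maps preserves \emph{contravariant} equivalences, while preservation of \emph{covariant} equivalences is what properness delivers. This is not a pedantic point, because the asymmetry is the entire content of the step: pullback along the smooth inclusion \( \{1\} \subset \Simplex 1 \) does \emph{not} preserve covariant equivalences (the left anodyne map \( \{0\} \subset \Simplex 1 \), viewed over \( \Simplex 1 \), pulls back to \( \emptyset \to \{1\} \)), so if the exit-path inclusion were a cosieve rather than a sieve the lemma would be false; matching sieve/right fibration/proper with the covariant structure is precisely how the geometric fact that exit paths may leave \( X_{\leq n} \) but never enter it gets used. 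Once ``smooth'' is replaced by ``proper'' --- equivalently, once you note that \( \{0\} \subset \Simplex 1 \) is a right fibration and right fibrations are stable under pullback, so \( \iota_n \) is a right fibration --- your argument is correct and coincides with the paper's, which verifies directly that \( \iota_n \) is a right fibration and invokes \cite[11.2]{Joyal} for the Quillen pair \( (j(n)^\ast, j(n)_\ast) \). A lesser gap: you assert the simplicial enrichment of both adjunctions rather than prove it; the paper justifies it via the natural isomorphisms \( j(n)_!(\Simplex p \times Y) \IsCanonicallyIsomorphicTo \Simplex p \times j(n)_!(Y) \) and \( j(n)^\ast(\Simplex p \times Y) \IsCanonicallyIsomorphicTo \Simplex p \times j(n)^\ast(Y) \), citing \cite[3.7.10]{doi:10.1017/cbo9781107261457}.
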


\begin{proof}
	The existence of the right adjoint to \( j(n)_\ast \) can
	be obtained by the adjoint functor theorem,
	using the presentability
	of \( \SliceExit_X \) and the universality of colimits
	in \( \SliceExit_{X_{\leq n}} \).
	The obvious fully faithfulness of the left adjoint \( j(n)_! \)
	implies the fully faithfulness of the right adjoint
	\( j(n)_\ast \).

	The forgetful functor \( j(n)_! \) has an obvious
	simplicial enrichment.
	Moreover, one has
	evident isomorphisms
	\( j(n)_!(\Simplex p \times Y) \IsCanonicallyIsomorphicTo
	\Simplex p \times j(n)_!(Y) \) natural in \( Y \in
	\SliceExit_{X_{\leq n}} \) for every \( p < \OrdinalOmega \).
	This is enough to endow \( j(n)^\ast \)
	and the adjunction
	\( j(n)_! \IsAdjointTo j(n)^\ast \)
	with a simplicial enrichment
	\cite[3.7.10]{doi:10.1017/cbo9781107261457}.
	Likewise
	one has isomorphisms \( j(n)^\ast (\Simplex p \times Y)
	\IsCanonicallyIsomorphicTo \Simplex p \times j(n)^\ast (Y) \)
	natural in \( Y \in \SliceExit_X \)
	for every \( p < \OrdinalOmega \), giving \( j(n)_\ast \)
	and the adjunction \( j(n)^\ast \IsAdjointTo j(n)_\ast \)
	a simplicial enrichment.

	In this kind of setup, the pair
	\( j(n)_! \IsAdjointTo j(n)^\ast \) is always a model
	adjunction
	\cite[2.1.4.10]{doi:10.1515/9781400830558}.
	The embedding \( \Exit {A_{\leq n}} (X_{\leq n}) \subset
	\Exit A (X) \) is a right fibration.
	This follows from the fact that if a stratified path
	ends in \( X_{\leq n} \), then all the path must be in
	\( X_{\leq n} \).
	As a consequence, the adjunction
	\( j(n)^\ast \IsAdjointTo j(n)_\ast \) is a model
	adjunction \cite[11.2]{Joyal}.
\end{proof}

As a consequence, both functors
\( j(n)^\ast \) and \( j(n)_\ast \) preserve fibrant objects
by the previous lemma and induce a reflexive localisation
\[
	\begin{tikzcd}[ampersand replacement=\&, column sep = huge]
		\Nerve\left(\SliceExit_X^\circ\right)
		\arrow[r, shift left=2,"j(n)^\ast"]
		\&
		\arrow[l, shift left=2, "j(n)_\ast", hook']
		\Nerve\left(\SliceExit_{X_{\leq n}}^\circ\right)
	\end{tikzcd}
\]
between the associated ∞-categories
\cite[5.2.4.5]{doi:10.1515/9781400830558}.
These adjunctions induce an adjunction
\[
	\begin{tikzcd}[ampersand replacement=\&, column sep = huge]
		\Nerve\left(\SliceExit_X^\circ\right)
		\arrow[r, shift left=2,"j^\ast"]
		\&
		\arrow[l, shift left=2, "j_\ast"]
		\varprojlim_{n < \OrdinalOmega}
		\Nerve\left(\SliceExit_{X_{\leq n}}^\circ\right)
	\end{tikzcd}
\]
where \( j^\ast \) is given by the family of the functors
\( j(n)^\ast \) and where
\[
	\textstyle
	j_\ast(\{Y_{\leq n}\})
	\coloneqq \varprojlim_{n < \OrdinalOmega} j(n)_\ast Y_{\leq n}
\]
for any \( \{Y_{\leq n}\}
\in \varprojlim_{n < \OrdinalOmega}
\Nerve(\SliceExit_{X_{\leq n}}^\circ) \).

\begin{lemma}
	\label{Lemma: comparison}
	Let \( \Category C \) be an ∞-category and consider two
	towers of monomorphisms
	\( X_0 \hookrightarrow X_1 \hookrightarrow \dots
	\hookrightarrow X_n \hookrightarrow \dots \hookrightarrow X \)
	and 
	\( Y_0 \hookrightarrow Y_1 \hookrightarrow \dots
	\hookrightarrow Y_n \hookrightarrow \dots \hookrightarrow Y \)
	in \( \Category C \).
	Assume that we have also given
	a morphism \( X \to Y \)
	and equivalences \( X_n \IsEquivalentTo Y_n \) together with
	natural transformations making the square
	\[
		\begin{tikzcd}[ampersand replacement=\&]
			X_n
			\arrow[r, "\IsEquivalentTo"]
			\arrow[d, hook]
			\& Y_n
			\arrow[d, hook] \\
			X
			\arrow[r]
			\& Y
		\end{tikzcd}
	\]
	commute, for every \( n < \OrdinalOmega \).
	This data is enough to induce a commutative square
	\[
		\begin{tikzcd}[column sep = large]
			\varinjlim_{n < \OrdinalOmega} X_n
			\arrow[r, "\IsEquivalentTo"]
			\arrow[d]
			& \varinjlim_{n < \OrdinalOmega} Y_n
			\arrow[d] \\
			X
			\arrow[r]
			& Y
		\end{tikzcd}
	\]
	whose top map is an equivalence and whose
	vertical maps are the canonical projections.
\end{lemma}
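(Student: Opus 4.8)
The plan is to recognise the given data as a single natural transformation between two cone-shaped diagrams and then to invoke the functoriality of colimits. Write \( \OrdinalOmega^\triangleright \) for the poset \( \OrdinalOmega \) with a terminal element \( \infty \) adjoined, and let \( F, G \From \OrdinalOmega^\triangleright \to \Category C \) be the functors determined by the two towers, with \( F(n) = X_n \), \( F(\infty) = X \), \( G(n) = Y_n \) and \( G(\infty) = Y \). The transition monomorphisms together with the maps \( X_n \hookrightarrow X \) and \( Y_n \hookrightarrow Y \) constitute the two diagrams, the given morphism \( X \to Y \) is the value at \( \infty \), and the equivalences \( X_n \IsEquivalentTo Y_n \) together with the commuting squares assemble into a natural transformation \( \alpha \From F \to G \) whose restriction \( \alpha|_{\OrdinalOmega} \) to \( \OrdinalOmega \subset \OrdinalOmega^\triangleright \) is a levelwise equivalence.

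Next I would introduce the two functors \( \Fun(\OrdinalOmega^\triangleright, \Category C) \to \Category C \) given by \( H \mapsto \varinjlim_{n < \OrdinalOmega} H(n) \) and by \( H \mapsto H(\infty) \), together with the canonical natural transformation \( \theta \) between them: for every \( H \), the object \( H(\infty) \) equipped with the structure maps \( H(n) \to H(\infty) \) is a cocone under \( H|_{\OrdinalOmega} \), and the universal property of the colimit produces the comparison map \( \theta_H \From \varinjlim_{n < \OrdinalOmega} H(n) \to H(\infty) \), naturally in \( H \). Applying the naturality of \( \theta \) to the morphism \( \alpha \From F \to G \) then yields precisely the asserted commutative square: the vertical arrows are \( \theta_F \) and \( \theta_G \), which are the canonical projections \( \varinjlim X_n \to X \) and \( \varinjlim Y_n \to Y \); the bottom arrow is \( \alpha_\infty \), namely the given map \( X \to Y \); and the top arrow is \( \varinjlim_{n < \OrdinalOmega} \alpha|_{\OrdinalOmega} \). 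The latter is an equivalence because \( \alpha|_{\OrdinalOmega} \) is an equivalence in \( \Fun(\OrdinalOmega, \Category C) \), being a levelwise equivalence, and every functor — in particular \( \varinjlim_{n < \OrdinalOmega} \) — preserves equivalences.

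The only delicate point is the coherence hidden in the first step: producing a genuine natural transformation \( \alpha \) in the ∞-categorical sense requires more than the pointwise equivalences and the individual commuting squares, namely the compatibility of the equivalences \( X_n \IsEquivalentTo Y_n \) with the transition maps of the two towers. Since this compatibility is exactly what the hypothesis — the natural transformations making each square commute, for every \( n < \OrdinalOmega \) — supplies, the diagram map \( \alpha \) is furnished directly by the data and no further construction is needed. Everything downstream is then the standard functoriality of \( \OrdinalOmega \)-indexed colimits and the naturality of the cocone comparison \( \theta \), so I expect the write-up to be short once the reindexing over \( \OrdinalOmega^\triangleright \) is in place.
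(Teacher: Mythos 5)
There is a genuine gap, and it sits precisely at the point you call ``delicate'' and then dismiss. The hypothesis of the lemma does \emph{not} furnish a map of diagrams \( \alpha \From F \to G \) in \( \Fun(\OrdinalOmega^\triangleright, \Category C) \). What is given are equivalences \( X_n \IsEquivalentTo Y_n \) and, for each \( n \), a homotopy filling the square with vertices \( X_n \), \( Y_n \), \( X \), \( Y \) --- that is, data over the arrows \( n \to \infty \) only. There is no hypothesis at all relating \( X_n \IsEquivalentTo Y_n \) to \( X_{n+1} \IsEquivalentTo Y_{n+1} \) across the transition monomorphisms (no squares over the arrows \( n \to n+1 \)), and even if such squares were added, objectwise maps together with one commuting square per arrow are still strictly less than a morphism of \( \OrdinalOmega^\triangleright \)-diagrams in an ∞-category: one also needs the homotopies over composite arrows to agree coherently with the pastings, and so on in all higher dimensions. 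So the sentence ``the diagram map \( \alpha \) is furnished directly by the data and no further construction is needed'' is exactly where the proof breaks.

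That this gap cannot be closed without further input is shown by an example in which your argument would apply verbatim: in \( \Category C = \Spaces \) take \( X = Y = \ast \), \( X_n = Y_n = \Circle \) with identity equivalences, identity transition maps in the first tower and degree-\( 2 \) self-maps in the second. Every square to the point commutes, yet \( \varinjlim X_n \IsEquivalentTo \Circle \) while \( \varinjlim Y_n \) is the telescope of multiplication by \( 2 \), whose fundamental group is the group of dyadic rationals; hence no map of diagrams \( F \to G \) restricting to equivalences over \( \OrdinalOmega \) can exist. The only hypothesis this example violates is the one your proof never uses: that the towers consist of \emph{monomorphisms}. That hypothesis is what the paper's proof exploits: after composing the cocone \( X_n \hookrightarrow X \) with \( X \to Y \), all objects in sight become subobjects of \( Y \), and the ∞-category of subobjects of an object is equivalent to a \( 0 \)-category; in a poset all of the coherence you were missing is automatic, the two towers become equal diagrams, and the comparison of colimits --- your \( \theta \)-naturality step, which is fine as far as it goes --- is then immediate. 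In short, your second and third steps are sound, but they can only be run after the monomorphism hypothesis has been used to produce the diagram map, and producing it is the actual content of the lemma.
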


\begin{proof}
	By composition with \( X \to Y \), we get a commutative
	triangle
	\[
		\begin{tikzcd}[column sep = large]
			\varinjlim_{n < \OrdinalOmega} X_n
			\arrow[d]
			\ar[dr]
			& \\
			X
			\arrow[r]
			& Y
		\end{tikzcd}
	\]
	letting us reduce to the case where \( X = Y \).
	
	Up to an equivalence, we can assume that the ∞-category
	of subobjects of \( Y \) is a
	\( 0 \)-category
	\cite[2.3.4.18]{doi:10.1515/9781400830558}.
	We thus reduce to the case where
	the maps \( X_n \hookrightarrow Y \) and
	\( Y_n \hookrightarrow Y \) are equal for
	every \( n < \OrdinalOmega \) and the result is trivial.
\end{proof}

\begin{remark}
	\label{Remark: monomorphisms}
	We shall use this lemma in the special case where
	\( \Category C \) is the ∞\=/category of presentable
	∞-categories and right adjoints.
	In this case fully faithful right adjoint functors are
	monomorphisms
	\cite[5.7]{doi:10.1016/j.aim.2018.05.031}.
	As a consequence, reflexive localisation functors
	are epimorphisms in the ∞-category of presentable
	∞-categories and left adjoint functors
	\cite[5.5.3.4]{doi:10.1515/9781400830558}.
\end{remark}

\begin{proposition}[(Dévissage)]
	\label{Proposition: dévissage of A}
	Let \( X \) be a \( \DDO \)-space.
	The adjunction
	\[
		\begin{tikzcd}[ampersand replacement=\&, column sep = huge]
			\Nerve\left(\SliceExit_X^\circ\right)
			\arrow[r, shift left=2,"j^\ast"]
			\&
			\arrow[l, shift left=2, "j_\ast"]
			\varprojlim_{n < \OrdinalOmega}
			\Nerve\left(\SliceExit_{X_{\leq n}}^\circ\right)
		\end{tikzcd}
	\]
	induced by the pairs \( j(n)^\ast \IsAdjointTo j(n)_\ast \)
	is an equivalence of ∞-categories.
\end{proposition}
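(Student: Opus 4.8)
The plan is to carry everything across the unstraightening equivalences \( \Nerve(\SliceExit_X^\circ) \simeq \Fun(\Exit A (X), \Spaces) \) and \( \Nerve(\SliceExit_{X_{\leq n}}^\circ) \simeq \Fun(\Exit {A_{\leq n}} (X_{\leq n}), \Spaces) \) recalled just before \UnskipRef{Theorem: representation for D-spaces}, under which \( j(n)^\ast \) becomes restriction along the inclusion \( i_n \From \Exit {A_{\leq n}} (X_{\leq n}) \hookrightarrow \Exit A (X) \), its right adjoint \( j(n)_\ast \) becomes right Kan extension along \( i_n \), and its left adjoint \( j(n)_! \) becomes left Kan extension. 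The guiding observation is that \( \Exit A (X) \simeq \varinjlim_{n < \OrdinalOmega} \Exit {A_{\leq n}} (X_{\leq n}) \) in the ∞-category of ∞-categories \UnskipRef{Proposition: Exit of DDO-space}, so that \( \Fun(-, \Spaces) \), which carries colimits of ∞-categories to limits, turns this into \( \Fun(\Exit A (X), \Spaces) \simeq \varprojlim_n \Fun(\Exit {A_{\leq n}} (X_{\leq n}), \Spaces) \) along the restriction functors. Since those restrictions are precisely the \( j(n)^\ast \), this already singles out \( j^\ast \) as the expected equivalence; the remaining task is to check that the concretely defined adjunction \( j^\ast \IsAdjointTo j_\ast \) realises it.

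First I would record the base-change identity \( j(m)^\ast j(n)_\ast \simeq j(m,n)^\ast \) for \( m \leq n \), where \( j(m,n)^\ast \) denotes restriction along \( i_{mn} \From \Exit {A_{\leq m}} (X_{\leq m}) \hookrightarrow \Exit {A_{\leq n}} (X_{\leq n}) \). This holds because \( i_n \) is fully faithful — in fact a sieve, the inclusion being a right fibration \UnskipRef{Lemma: j(n)^* j(n)_* adjunction} — so that the comma category computing the right Kan extension \( j(n)_\ast \) at an object lying in the image of \( i_n \) has an initial object and the extension returns the value there. Taking \( m = n \) gives \( j(n)^\ast j(n)_\ast \simeq \id \), recovering the full faithfulness of \( j(n)_\ast \).

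Then I would check that the unit and counit of \( j^\ast \IsAdjointTo j_\ast \) are equivalences, working objectwise over \( \Exit A (X) \) since both restriction and the limit defining \( j_\ast \) are computed pointwise. For the counit, given a compatible family \( \{Y_{\leq n}\} \) one has \( j(k)^\ast j_\ast \{Y_{\leq n}\} \simeq \varprojlim_n j(k)^\ast j(n)_\ast Y_{\leq n} \); by the base-change identity the terms with \( n \geq k \) are canonically \( Y_{\leq k} \) with invertible transition maps, and an inverse limit of a tower depends only on its tail, so this returns \( Y_{\leq k} \). For the unit, fix a functor \( Y \) and an object \( x \) of \( \Exit A (X) \); since every simplex of \( \Exit A (X) \) visits only finitely many strata \UnskipRef{Proposition: Exit of DDO-space}, there is an \( N \) with \( x \) in the image of \( i_N \), and then for every \( n \geq N \) the value \( (j(n)_\ast j(n)^\ast Y)(x) \) is again \( Y(x) \); the tail of the tower defining \( (j_\ast j^\ast Y)(x) \) is therefore constant on \( Y(x) \), so the unit is an equivalence at \( x \).

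The main obstacle is exactly this stabilisation-plus-base-change step: it is the only place where the geometry enters, through the finiteness of the strata visited by an exit path, and it must be combined with the full faithfulness of \( i_n \); everything else is formal. These formal parts can alternatively be packaged through the tools assembled above: the \( j(n)_\ast \) are fully faithful right adjoints, hence monomorphisms in the ∞-category of presentable ∞-categories and right adjoints \UnskipRef{Remark: monomorphisms}, and the relation \( j(n)_\ast \simeq j(n+1)_\ast \, j(n,n+1)_\ast \) assembles them into an increasing tower of monomorphisms. Identifying the colimit of this tower with \( \varprojlim_n \Nerve(\SliceExit_{X_{\leq n}}^\circ) \) through the duality between limits along right adjoints and colimits along their left adjoints, the Comparison Lemma \UnskipRef{Lemma: comparison} then upgrades the objectwise stabilisation into the assertion that the tower exhausts \( \Nerve(\SliceExit_X^\circ) \), so that the comparison functor — whose right adjoint is \( j^\ast \) — is an equivalence.
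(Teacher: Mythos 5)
You follow the same skeleton as the paper's own proof: transfer the problem across the straightening/unstraightening equivalences to the functor ∞\=/categories \( \Fun(\Exit A (X), \Spaces) \) and \( \Fun(\Exit {A_{\leq n}} (X_{\leq n}), \Spaces) \), feed the colimit decomposition of \( \Exit A (X) \) \UnskipRef{Proposition: Exit of DDO-space} into the fact that \( \Fun(-, \Spaces) \) turns colimits of ∞\=/categories into limits, and transfer the conclusion back to the concretely defined adjunction \( j^\ast \IsAdjointTo j_\ast \) via the comparison lemma \DoubleUnskipRef{Lemma: comparison}{Remark: monomorphisms}. Your supplementary verification that the unit and counit are equivalences — the base change \( j(m)^\ast j(n)_\ast \simeq j(m,n)^\ast \) along the sieve inclusions, plus stabilisation of the towers because every exit path meets only finitely many strata — is correct and is a more hands-on substitute for the paper's purely formal final step; it buys an explicit objectwise description of \( j_\ast \), at the cost of tracking the coherence of the identifications in the towers, which the formal argument avoids.

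The one genuine gap is the opening sentence in which you carry \( j(n)^\ast \), \( j(n)_\ast \) and \( j(n)_! \) across unstraightening and declare them to be restriction, right Kan extension and left Kan extension along \( i_n \). That identification is \emph{not} part of the background recalled before the representation theorem [\ref{Theorem: representation for D-spaces}]; it is the technical heart of the paper's proof and occupies most of it. Concretely, one must check that the square relating unstraightening to the two pullback functors commutes as a square of \emph{simplicially enriched} functors — the enrichment of the unstraightening functor is constructed by hand, and its compatibility with \( j(n)^\ast \) is a computation, not a formality — and that \( j(n)^\ast \) preserves cofibrant objects for the covariant model structures, which the paper deduces from the fact that \( \CC[-] \) sends the monomorphism \( \Exit {A_{\leq n}} (X_{\leq n}) \subset \Exit A (X) \) to a cofibration of simplicial categories. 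Only after these points are settled can one conclude that the induced functors on the associated ∞\=/categories agree with the restriction functors between functor ∞\=/categories. The statement you assume is true, so nothing downstream in your argument fails; but as written the proof is incomplete precisely at the step the paper regards as the real work.
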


\begin{proof}
	Let \( \SSets^{\CC[\Exit A (X)]} \) denote the category
	of simplicially enriched functors from
	\( \CC[\Exit A (X)] \) to \( \SSets \) endowed with the
	projective model structure.
	The diagram of right model adjoints
	\[
		\begin{tikzcd}[ampersand replacement=\&,
			column sep = 90]
			\SSets^{\CC[\Exit A (X)]} 
			\arrow[r, "\text{unstraightening}"]
			\arrow[d, "j(n)^\ast", swap]
			\& 
			\SliceExit_X
			\arrow[d, "j(n)^\ast"] \\
			\SSets^{\CC[\Exit A (X_{\leq n})]} 
			\arrow[r, "\text{unstraightening}"]
			\& 
			\SliceExit_{X_{\leq n}}
		\end{tikzcd}
	\]
	is commutative
	\cite[2.2.1.1]{doi:10.1515/9781400830558}.

	Moreover, all functors are simplicially enriched
	\cite[2.2.2.12]{doi:10.1515/9781400830558}
	and the commutation holds at the enriched level.
	Let us see why.
	The enrichment for the unstraightening functor
	stems from the following sequence:
	for any two simplicial functors
	\( F \) and \( G \) and any simplicial set \( K \),
	any natural transformation
	\( F \boxtimes K \to G \)
	induces a natural transformation
	\( \alpha(F, G, K) \)
	\[
		\Un(F) \times K \longrightarrow
		\Un(F) \times \Sing_{\mathrm{Q}^\bullet}(K)
		\IsCanonicallyIsomorphicTo
		\Un(F \boxtimes K) \longrightarrow \Un(G)
	\]
	while the enrichment for both pullback functors comes
	straight from commutation with tensoring with simplicial
	sets.
	Using this fact, one gets that
	\( j(n)^\ast \alpha(F, G, K) \) is
	isomorphic
	to \( \alpha(j(n)^\ast F, j(n)^\ast G, K) \).

	Because
	\( \Exit {A_{\leq n}} (X_{\leq n}) \subset \Exit A (X) \)
	is a monomorphism, the associated simplicial functor
	\( \CC[\Exit {A_{\leq n}} (X_{\leq n})]
	\to \CC[\Exit A (X)] \) is a cofibration
	\cite[2.2.5.1]{doi:10.1515/9781400830558},
	as a consequence the
	associated pullback functor \( j(n)^\ast \) preserves
	cofibrant objects \cite[A.3.3.9]{doi:10.1515/9781400830558}.
	Then, all functors in this square preserve fibrant-cofibrant
	objects and one gets
	\[
		\begin{tikzcd}
			\Nerve\left(\left(
			\SSets^{\CC[\Exit A (X)]}\right)^\circ\right)
			\arrow[r, equal]
			\arrow[d, "j(n)^\ast", swap]
			& 
			\Nerve\left(\SliceExit_X^\circ\right)
			\arrow[d, "j(n)^\ast"] \\
			\Nerve\left(\left(\SSets^{
			\CC[\Exit A (X_{\leq n})]}\right)^\circ\right)
			\arrow[r, equal]
			& 
			\Nerve\left(\SliceExit_{X_{\leq n}}^\circ\right)
		\end{tikzcd}
	\]
	an equivalence between the two induced pullback functors
	at the ∞-category level
	\cite[2.2.3.11]{doi:10.1515/9781400830558}.
	One also has a commutative diagram
	\[
		\begin{tikzcd}
			\Nerve\left(\left(
			\SSets^{\CC[\Exit A (X)]}\right)^\circ\right)
			\arrow[r, equal]
			\arrow[d, "j(n)^\ast", swap]
			& 
			\Fun(\Exit A (X), \Spaces)
			\arrow[d, "j(n)^\ast"] \\
			\Nerve\left(\left(\SSets^{
			\CC[\Exit A (X_{\leq n})]}\right)^\circ\right)
			\arrow[r, equal]
			& 
			\Fun\left(
			\Exit {A_{\leq n}} (X_{\leq n}), \Spaces\right)
		\end{tikzcd}
	\]
	letting us identify \( j(n)^\ast \) with the pullback
	at the level of ∞-categories of functors
	\cite[4.2.4.4]{doi:10.1515/9781400830558}.

	We then reduce to show that the canonical map
	\[
		\begin{tikzcd}
			\Fun(\Exit A (X), \Spaces)
			\rar["j^\ast"]
			&
			\varprojlim_{n < \OrdinalOmega}
			\Fun\left(
			\Exit {A_{\leq n}} (X_{\leq n}), \Spaces\right)
		\end{tikzcd}
	\]
	is an equivalence
	\DoubleUnskipRef{Lemma: comparison}{Remark: monomorphisms},
	which follows from the fact that
	\( \Exit A (X) \) is a colimit
	of the diagram
	\(\{\Exit {A_{\leq n}} (X_{\leq n})\}_{n < \OrdinalOmega} \)
	in the ∞-category of ∞-categories
	\UnskipRef{Proposition: Exit of DDO-space}.
\end{proof}

\begin{theorem}
	Let \( X \to A \to \OrdinalOmega \) be a
	\( \DDO \)-space and
	assume that \( X \) be either a conical or a colimit
	\( \DDO \)-space.
	
	The functor \( \ConstructibleRealisation_X \) is fully
	faithful and induces
	an equivalence of ∞-categories
	\[
		\Nerve\left(\SliceExit_X^\circ\right) 
		\IsCanonicallyIsomorphicTo
		\ConstructibleHypersheaves A (X)
	\]
	between \( \Nerve(\SliceExit_X^\circ) \) and the
	∞-category of \( A \)-constructible hypersheaves on \( X \).
\end{theorem}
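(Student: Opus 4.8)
The plan is to obtain the statement from Lurie's representation theorem for \( \DD \)-spaces \UnskipRef{Theorem: representation for D-spaces} by dévissage along the filtration of \( X \) by its closed layers \( X_{\leq n} \), each of which is a \( \DD \)-space. Both halves of the reduction are already available. On the exit-path side, the dévissage of \( \SliceExit \) \UnskipRef{Proposition: dévissage of A} yields
\[
	\Nerve\left(\SliceExit_X^\circ\right)
	\IsCanonicallyIsomorphicTo
	\varprojlim_{n < \OrdinalOmega}
	\Nerve\left(\SliceExit_{X_{\leq n}}^\circ\right),
\]
while on the sheaf side the conical dévissage \UnskipRef{Theorem: dévissage conical}, valid for both conical and colimit \( \DDO \)-spaces, yields
\[
	\ConstructibleHypersheaves A (X)
	\IsCanonicallyIsomorphicTo
	\varprojlim_{n < \OrdinalOmega}
	\ConstructibleSheaves {A_{\leq n}} (X_{\leq n}).
\]
For each \( n < \OrdinalOmega \) the functor \( \ConstructibleRealisation_{X_{\leq n}} \) is an equivalence onto \( \ConstructibleSheaves{A_{\leq n}}(X_{\leq n}) \) by the \( \DD \)-space case. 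The whole task is thus to glue these levelwise equivalences into \( \ConstructibleRealisation_X \), compatibly with the two dévissage equivalences.

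The decisive point I would establish is that \( \ConstructibleRealisation_X \) commutes with restriction to the layers, i.e.\ that for every \( n < \OrdinalOmega \) the square
\[
	\begin{tikzcd}[ampersand replacement=\&]
		\Nerve\left(\SliceExit_X^\circ\right)
		\arrow[r, "\ConstructibleRealisation_X"]
		\arrow[d, "j(n)^\ast", swap]
		\& \Sh(X)
		\arrow[d, "j(n)^\ast"] \\
		\Nerve\left(\SliceExit_{X_{\leq n}}^\circ\right)
		\arrow[r, "\ConstructibleRealisation_{X_{\leq n}}"]
		\& \Sh(X_{\leq n})
	\end{tikzcd}
\]
commutes up to natural equivalence, where the left vertical map is the pullback along the monomorphism \( \Exit{A_{\leq n}}(X_{\leq n}) \subset \Exit A (X) \) \UnskipRef{Lemma: j(n)^* j(n)_* adjunction} and the right vertical map is sheaf restriction along the closed embedding \( j(n) \From X_{\leq n} \subset X \); a priori \( \ConstructibleRealisation_X \) takes values in presheaves, and part of the point is that the chart computation below exhibits its image inside \( \Sh(X) \). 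Granting the square, the family \( \{j(n)^\ast\}_{n} \) intertwines \( \ConstructibleRealisation_X \) with the tower \( \{\ConstructibleRealisation_{X_{\leq n}}\}_n \), so that under the two dévissage equivalences \( \ConstructibleRealisation_X \) is identified with \( \varprojlim_{n} \ConstructibleRealisation_{X_{\leq n}} \).

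To prove the compatibility I would work locally on \( X_{\leq n} \) and reduce, via the conical structure \UnskipRef{Argument: local structure of cones}, to the conical charts \( X_a \times \Cone(L) \), where a constructible sheaf is determined by its values on the \( \FSigma \) open basis stable under intersection. The right-hand composite \( \ConstructibleRealisation_{X_{\leq n}}(j(n)^\ast Y) \) is \( A_{\leq n} \)-constructible by the \( \DD \)-space theorem, and on such a chart its value is computed by homotopy invariance \UnskipRef{theorem: homotopy invariance of hyperconstructibles} as the fibre of \( j(n)^\ast Y \IsCanonicallyIsomorphicTo \Exit{A_{\leq n}}(X_{\leq n}) \times_{\Exit A (X)} Y \) over the exit-path object attached to \( X_a \), which coincides with the fibre of \( Y \) itself. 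A parallel computation for \( \ConstructibleRealisation_X(Y) \) on a conical chart returns the same fibre, so the two composites agree and, in passing, \( \ConstructibleRealisation_X(Y) \) is exhibited as layerwise constructible. I expect this to be the main obstacle: the pullback \( j(n)^\ast \) along the closed embedding \( X_{\leq n} \subset X \) is not a naive restriction but a filtered colimit over open neighbourhoods of \( X_{\leq n} \), so the naturality of \( \ConstructibleRealisation \) has to be verified against this colimit rather than read off from its definition, and the chart computation must be arranged exactly as in the proof of the conical dévissage \UnskipRef{Theorem: dévissage conical}.

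Finally I would conclude. Each \( \ConstructibleRealisation_{X_{\leq n}} \) being an equivalence, the induced map of limits
\[
	\varprojlim_{n < \OrdinalOmega}
	\Nerve\left(\SliceExit_{X_{\leq n}}^\circ\right)
	\IsCanonicallyIsomorphicTo
	\varprojlim_{n < \OrdinalOmega}
	\ConstructibleSheaves {A_{\leq n}} (X_{\leq n})
\]
is an equivalence. Composing with the two dévissage equivalences \DoubleUnskipRef{Proposition: dévissage of A}{Theorem: dévissage conical} and using the compatibility square to identify the composite with \( \ConstructibleRealisation_X \), we obtain that
\[
	\ConstructibleRealisation_X \From
	\Nerve\left(\SliceExit_X^\circ\right)
	\longrightarrow \ConstructibleHypersheaves A (X)
\]
is an equivalence of ∞-categories; in particular it is fully faithful with essential image the \( A \)-constructible hypersheaves on \( X \). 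In the colimit case one may moreover compose with \UnskipRef{Theorem: dévissage colimit} to identify this target with \( \ConstructibleSheaves A (X) \).
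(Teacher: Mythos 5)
Your skeleton --- the levelwise representation theorem for \( \DD \)-spaces \UnskipRef{Theorem: representation for D-spaces}, the exit-path dévissage \UnskipRef{Proposition: dévissage of A} and the sheaf-side dévissage \UnskipRef{Theorem: dévissage conical} --- is the same as the paper's, but your gluing step, along the pullbacks \( j(n)^\ast \), contains a genuine circularity. First, to even write your square you need \( \ConstructibleRealisation_X(Y) \) to be a sheaf, so that the sheaf-theoretic \( j(n)^\ast \) applies to it; for a \( \DDO \)-space this is not known in advance (for \( \DD \)-spaces it is part of Lurie's theorem, proved by induction on depth, unavailable here), and computing values on conical charts does not establish descent. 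Second, and more seriously, even granting the square, identifying the restrictions \( j(n)^\ast \ConstructibleRealisation_X(Y) \) for all \( n \) does not identify \( \ConstructibleRealisation_X(Y) \) itself. Writing \( E \coloneqq j_\ast \circ \varprojlim_{n} \ConstructibleRealisation_{X_{\leq n}} \circ j^\ast \) for the composite equivalence onto \( \ConstructibleHypersheaves A (X) \), your square only gives \( j^\ast \ConstructibleRealisation_X \IsCanonicallyIsomorphicTo j^\ast E \); to cancel \( j^\ast \) you must know that the unit \( \ConstructibleRealisation_X(Y) \to j_\ast j^\ast \ConstructibleRealisation_X(Y) \) is invertible, which is precisely the assertion that \( \ConstructibleRealisation_X(Y) \) is an \( A \)-constructible hypersheaf --- the statement being proven. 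The dévissage theorem \UnskipRef{Theorem: dévissage conical} makes \( j^\ast \) an equivalence only after restriction to \( \ConstructibleHypersheaves A (X) \); in the conical case (the genuinely new case, e.g.\ the metric exponential) nothing makes \( j^\ast \) conservative on the larger category of sheaves where \( \ConstructibleRealisation_X(Y) \) a priori lives --- hypercompleteness is exactly the condition allowing a constructible sheaf to be recovered from its layers. So your argument pins \( \ConstructibleRealisation_X(Y) \) down only up to hypercompletion. (Only in the colimit case, where \UnskipRef{Theorem: dévissage colimit} holds at the level of full sheaf categories, could the cancellation be repaired, and even then sheafiness remains to be proven.)

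The paper's proof avoids both problems by gluing along the pushforwards \( j(n)_\ast \) instead, assembling everything as colimits in the ∞-category of presentable ∞-categories and right adjoints \DoubleUnskipRef{Lemma: comparison}{Remark: monomorphisms}. With that choice the compatibility square is formal: since \( j(n)^\ast \Exit A (U) = \Exit {A_{\leq n}} (U \cap X_{\leq n}) \) and \( j(n)^\ast \IsAdjointTo j(n)_\ast \) is a simplicially enriched adjunction \UnskipRef{Lemma: j(n)^* j(n)_* adjunction}, one has \( \ConstructibleRealisation_X(j(n)_\ast Y)(U) \IsCanonicallyIsomorphicTo \ConstructibleRealisation_{X_{\leq n}}(Y)(U \cap X_{\leq n}) \), with no local analysis and with \( \ConstructibleRealisation_X(Y) \) allowed to be a mere presheaf. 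The essential-image question is then settled for free: \( j(n)_\ast \) carries constructible sheaves on the \( \DD \)-space \( X_{\leq n} \) to \( A \)-constructible hypersheaves on \( X \) \UnskipRef{Lemma: push of constructible hypersheaf}, and the dévissage theorems identify the image of the assembled right-hand vertical \( j_\ast \) with \( \ConstructibleHypersheaves A (X) \), so that sheafiness, hypercompleteness and constructibility of \( \ConstructibleRealisation_X(Y) \) come out as consequences rather than being needed as inputs. To salvage your pullback route you would have to prove independently that \( \ConstructibleRealisation_X \) takes values in hypersheaves satisfying descent, essentially redoing the hard part of Lurie's argument without the ascending chain condition; the paper's choice of right adjoints is therefore not cosmetic but the point of the proof.
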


\begin{proof}
	For each \( n < \OrdinalOmega \) and each open subset
	\( U \subset X \)
	one has \( j(n)^\ast \Exit A (U)
	= \Exit {A_{\leq n}} (U \cap X_{\leq n}) \), implying that
	for each left fibration
	\( Y \) over \( \Exit {A_{\leq n}} (X_{\leq n}) \), 
	\( \thetaup(U, j(n)_\ast Y)
	\IsCanonicallyIsomorphicTo
	\thetaup(U \cap X_{\leq n}, Y) \) since
	\( j(n)^\ast \IsAdjointTo j(n)_\ast \) is a simplicially
	enriched adjunction
	\UnskipRef{Lemma: j(n)^* j(n)_* adjunction}.
	As a consequence, the square
	\[
		\begin{tikzcd}[column sep = huge]
			\Nerve\left(\SliceExit_X\right)
			\rar["\ConstructibleRealisation_X"]
			&
			\Presheaves(X)
			\\
			\Nerve\left(\SliceExit_{X_{\leq n}}\right)
			\ar[u, "j(n)_\ast", hook']
			\rar["\ConstructibleRealisation_{X_{\leq n}}"]
			&
			\ar[u, "j(n)_\ast"', hook]
			\Presheaves(X_{\leq n})
		\end{tikzcd}
	\]
	commutes up to a natural equivalence.
	In fact, because \( X_{\leq n} \) is a \( \DD \)-space
	and \( j(n)_\ast \) preserves sheaves among presheaves and
	constructible sheaves among sheaves
	\UnskipRef{Lemma: push of constructible hypersheaf},
	we have an induced
	\[
		\begin{tikzcd}[column sep = huge]
			\Nerve\left(\SliceExit_X\right)
			\rar["\ConstructibleRealisation_X"]
			&
			\Presheaves(X)
			\\
			\Nerve\left(\SliceExit_{X_{\leq n}}\right)
			\ar[u, "j(n)_\ast", hook']
			\rar["\ConstructibleRealisation_{X_{\leq n}}"]
			&
			\ar[u, "j(n)_\ast"', hook]
			\ConstructibleSheaves {A_{\leq n}} (X_{\leq n})
		\end{tikzcd}
	\]
	commutative square of right adjoints
	\UnskipRef{Lemma: Psi is right adjoint}
	by the representation theorem for \( \DD \)-spaces
	\UnskipRef{Theorem: representation for D-spaces}.

	One thus gets a commutative diagram
	\[
		\begin{tikzcd}[column sep = 90]
			\Nerve\left(\SliceExit_X\right)
			\rar["\ConstructibleRealisation_X"]
			&
			\Presheaves(X)
			\\
			\varinjlim_{n < \OrdinalOmega}
			\Nerve\left(\SliceExit_{X_{\leq n}}\right)
			\ar[u, "j_\ast"]
			\rar["\varinjlim_{n < \OrdinalOmega}
			\ConstructibleRealisation_{X_{\leq n}}"]
			&
			\ar[u, "j_\ast"']
			\varinjlim_{n < \OrdinalOmega}
			\ConstructibleSheaves {A_{\leq n}} (X_{\leq n})
		\end{tikzcd}
	\]
	where the colimits are taken in the ∞-category of
	presentable ∞-categories and right adjoints
	\DoubleUnskipRef{Lemma: comparison}{Remark: monomorphisms}.
	By the representation theorem for \( \DD \)-spaces
	\UnskipRef{Theorem: representation for D-spaces},
	the bottom map is an equivalence of ∞-categories.
	By dévissage
	\UnskipRef{Proposition: dévissage of A},
	the left arrow is an equivalence of ∞\=/categories.
	Indeed, colimits in the ∞-category of presentable ∞-categories
	and right adjoints are canonically equivalent to
	the associated limit
	in the ∞-category of presentable ∞-categories
	and left adjoints
	\cite[5.5.3.4]{doi:10.1515/9781400830558}.
	Using the same argument, one deduces that
	the right arrow is fully faithful and its
	image is the subcategory of
	\( A \)-constructible hypersheaves again by dévissage
	\DoubleUnskipRef{Theorem: dévissage colimit}{Theorem:
	dévissage conical}.
\end{proof}

\begin{corollary}[(Representation theorem, conical case)]
	\label{Representation, conical case}
	Let \( X \to A \to \OrdinalOmega \) be a conical
	\( \DDO \)-space.
	Then the ∞-category of exit paths
	\( \Exit A (X) \)
	represents
	\[
		\Fun(\Exit A (X), \Spaces)
		\IsCanonicallyIsomorphicTo
		\ConstructibleHypersheaves A (X)
	\]
	the
	∞-category of \( A \)\=/constructible hypersheaves on \( X \).
\end{corollary}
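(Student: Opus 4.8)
The plan is to read off this corollary as a direct specialisation of the theorem just proved, so that essentially no new argument is required beyond unwinding the identification of the functor \(\infty\)-category with the model-categorical slice. First I would recall the chain of equivalences recorded at the beginning of this subsection, valid for any \( A \)-stratified space \( X \),
\[
	\Fun(\Exit A (X), \Spaces)
	\IsCanonicallyIsomorphicTo
	\Nerve\left(\SliceExit_X^\circ\right),
\]
which passes through \( \Nerve\left(\left(\SSets^{\CC[\Exit A (X)]}\right)^\circ\right) \) via the forgetful functor on one side and the unstraightening functor on the other \cite[2.2.1.2 \& 4.2.4.4]{doi:10.1515/9781400830558}. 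Next I would observe that a conical \( \DDO \)-space is in particular a \( \DDO \)-space satisfying the \emph{conical or colimit} alternative of the preceding theorem; that theorem therefore applies and produces an equivalence
\[
	\Nerve\left(\SliceExit_X^\circ\right)
	\IsCanonicallyIsomorphicTo
	\ConstructibleHypersheaves A (X)
\]
realised by the fully faithful functor \( \ConstructibleRealisation_X \), whose essential image is the subcategory of \( A \)-constructible hypersheaves. Composing the two displays then gives the representation \( \Fun(\Exit A (X), \Spaces) \IsCanonicallyIsomorphicTo \ConstructibleHypersheaves A (X) \) asserted in the statement.

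I do not expect any genuine obstacle at this stage, since all the substance has already been absorbed into the dévissage \UnskipRef{Proposition: dévissage of A} and coincidence \UnskipRef{Corollary: hyperconstructible = constructible} results feeding the previous theorem. The only point I would check with some care is that the straightening–unstraightening comparison of the first display is natural in \( X \), so that the equivalence it provides genuinely intertwines \( \ConstructibleRealisation_X \) with the induced functor out of \( \Fun(\Exit A (X), \Spaces) \); but this naturality is exactly the content of \cite[4.2.4.4]{doi:10.1515/9781400830558} and has already been invoked in establishing the theorem, so the corollary is a purely formal consequence.
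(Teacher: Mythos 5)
Your proposal is correct and follows exactly the route the paper intends: the corollary is stated without a separate proof precisely because it is the immediate specialisation of the preceding theorem (a conical \( \DDO \)-space satisfies its ``conical or colimit'' hypothesis), composed with the chain of equivalences \( \Fun(\Exit A (X), \Spaces) \IsCanonicallyIsomorphicTo \Nerve\left(\SliceExit_X^\circ\right) \) recalled at the start of the subsection. Your extra care about the naturality of the straightening--unstraightening comparison is reasonable but, as you note, already absorbed into the proof of the theorem itself.
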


\begin{corollary}[(Representation theorem, colimit case)]
	\label{Representation, colimit case}
	Let \( X \to A \to \OrdinalOmega \)
	be a colimit \( \DDO \)-space.
	The ∞-category of exit paths
	\( \Exit A (X) \)
	\[
		\Fun(\Exit A (X), \Spaces)
		\IsCanonicallyIsomorphicTo
		\ConstructibleHypersheaves A (X)
		\IsCanonicallyIsomorphicTo
		\ConstructibleSheaves A (X)
	\]
	represents the ∞-category of \( A \)-contructible sheaves.
\end{corollary}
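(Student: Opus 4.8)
The plan is to deduce this statement formally from the representation theorem established just above together with the colimit dévissage theorem \UnskipRef{Theorem: dévissage colimit}; the corollary itself contributes no new content, since every ingredient is already in place. First I would recall the chain of equivalences
\[
	\Fun(\Exit A (X), \Spaces)
	\IsCanonicallyIsomorphicTo
	\Nerve\left(\SliceExit_X^\circ\right)
\]
set up at the opening of this subsection via the forgetful and unstraightening functors. This identifies the functor ∞-category out of the exit paths with the presentable ∞-category presented by the slice model category \( \SliceExit_X \), so that it suffices to work with \( \Nerve(\SliceExit_X^\circ) \) and the comparison functor \( \ConstructibleRealisation_X \).

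Next, since a colimit \( \DDO \)-space is one of the two cases to which the preceding theorem applies, I would invoke that theorem to conclude that \( \ConstructibleRealisation_X \) is fully faithful and that it induces an equivalence \( \Nerve(\SliceExit_X^\circ) \IsCanonicallyIsomorphicTo \ConstructibleHypersheaves A (X) \). Composing with the identification of the first paragraph yields the first claimed equivalence \( \Fun(\Exit A (X), \Spaces) \IsCanonicallyIsomorphicTo \ConstructibleHypersheaves A (X) \), by exactly the same reasoning used for the conical corollary \UnskipRef{Representation, conical case}.

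The only genuinely colimit-specific input is the second equivalence. For this I would appeal to the final clause of the colimit dévissage theorem \UnskipRef{Theorem: dévissage colimit}, which asserts precisely that for a colimit \( \DDO \)-space every \( A \)-constructible sheaf on \( X \) is already a hypersheaf, so that the two full subcategories of \( \Sh(X) \) coincide and \( \ConstructibleHypersheaves A (X) = \ConstructibleSheaves A (X) \). Concatenating the two equivalences then produces the full chain \( \Fun(\Exit A (X), \Spaces) \IsCanonicallyIsomorphicTo \ConstructibleHypersheaves A (X) \IsCanonicallyIsomorphicTo \ConstructibleSheaves A (X) \). At this level there is no real obstacle: all the difficulty is concentrated in the dévissage propositions and in verifying that each \( X_{\leq n} \) is a \( \DD \)-space so that its constructible sheaves are hypersheaves and that an inverse limit of hypersheaves remains a hypersheaf. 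The one point deserving a moment's care is the compatibility of the two equivalences, namely that the equality \( \ConstructibleHypersheaves A (X) = \ConstructibleSheaves A (X) \) furnished by dévissage matches the image of \( \ConstructibleRealisation_X \) produced by the representation theorem, so that the stated chain really composes without any reindexing of the functor.
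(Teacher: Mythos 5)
Your proposal is correct and coincides with the paper's own (implicit) argument: the corollary is obtained exactly as you describe, by composing the identification \( \Fun(\Exit A (X), \Spaces) \IsCanonicallyIsomorphicTo \Nerve(\SliceExit_X^\circ) \) and the preceding representation theorem with the final clause of the colimit dévissage theorem~[\ref{Theorem: dévissage colimit}], which states that every \( A \)-constructible sheaf on a colimit \( \DDO \)-space is a hypersheaf. Your closing concern about compatibility is moot, since \( \ConstructibleHypersheaves A (X) = \ConstructibleSheaves A (X) \) is an equality of full subcategories of \( \Sh(X) \), so no reindexing of the composite equivalence can arise.
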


\begin{remark}
	\label{Remark: functoriality of representation theorem}
	It is natural to conjecture that the equivalences
	\( \ConstructibleRealisation_X \) are part of a natural
	equivalence of functors.
	In particular, we should have commutative squares
	\[
		\begin{tikzcd}[column sep = huge]
			\Fun(\Exit A (Y), \Spaces)
			\arrow[d, "\rotatebox{270}{\( \IsEquivalentTo \)}"]
			\arrow[r, "\Exit A (f)^\ast"]
			& 
			\Fun(\Exit A (X), \Spaces)
			\arrow[d, "\rotatebox{270}{\( \IsEquivalentTo \)}"] \\
			\ConstructibleHypersheaves A (Y)
			\arrow[r, "\Hyperpullback f"]
			& 
			\ConstructibleHypersheaves A (X)
		\end{tikzcd}
	\]
	whenever given a conical or colimit
	\( \DDO \)-space \( Y \to A \to \OrdinalOmega \)
	and a continuous map \( f \From X \to Y \) letting
	\( X \to Y \to A \to \OrdinalOmega \) be a
	conical or colimit \( \DDO \)-space.

	The existence of these commutative squares has been
	shown by Lurie in the case where \( X \) and \( Y \)
	are \( \DD \)-spaces
	\cite[A.10.16]{arXiv:0911.0018}.
	Using a similar proof, one can show their existence
	in the case of conical or colimit \( \DDO \)-spaces.
\end{remark}

\subsection{Homotopy invariance of exit paths}

Ayala, Francis and Rozenblyum have shown that the exit paths
∞-category functor is fully faithful between the ∞-category of
smooth stratified spaces and the ∞-category of ∞-categories
\cite{doi:10.4171/jems/856}.
We prove here a modest extension to \( \DDO \)-spaces, showing
that the exit paths functor is homotopy invariant.
This homotopy invariance result invites us to think that the
stratified homotopy hypothesis for smooth stratified spaces could be
extended to stratified spaces whose poset of strata does not
satisfy the ascending chain condition.

\begin{lemma}
	Let \( f \From \Category C \to \Category D \)
	be a functor between two small
	idempotent complete ∞-categories.
	If the pullback functor
	\[
		\begin{tikzcd}
			\Fun(\Category D, \Spaces)
			\rar["f^\ast"]
				& \Fun(\Category C, \Spaces)
		\end{tikzcd}
	\]
	is an equivalence of ∞-categories, then
	so is \( f \).
\end{lemma}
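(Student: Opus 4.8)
The plan is to recover $\Category C$ and $\Category D$ from their $\infty$-categories of copresheaves by isolating, in a purely categorical way, the essential image of the Yoneda embedding. First I would identify $\Fun(\Category C, \Spaces)$ with the presheaf $\infty$-category $\Presheaves(\Category C\Op) = \Fun((\Category C\Op)\Op, \Spaces)$, the free cocompletion of $\Category C\Op$, and likewise for $\Category D$. Under this identification $f^\ast$ is restriction along $f\Op \From \Category C\Op \to \Category D\Op$; it preserves all small limits and colimits, computed pointwise, hence admits a left adjoint $f_!$, the left Kan extension. Since $f^\ast$ is assumed to be an equivalence, its left adjoint is a quasi-inverse, so $f_!$ is itself an equivalence. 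The single input I import from the theory of Kan extensions is the standard compatibility of $f_!$ with the Yoneda embeddings,
\[
	f_! \circ \Yoneda_{\Category C\Op}
	\IsEquivalentTo
	\Yoneda_{\Category D\Op} \circ f\Op,
\]
i.e. $f_!$ carries the corepresentable $\Map_{\Category C}(c, -)$ to $\Map_{\Category D}(f(c), -)$.

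Full faithfulness of $f$ then comes for free. Because $\Yoneda_{\Category C\Op}$, $\Yoneda_{\Category D\Op}$ and $f_!$ are all fully faithful, the displayed compatibility yields
\[
	\Map_{\Category D\Op}(f\Op c, f\Op c')
	\IsEquivalentTo
	\Map(f_! \Yoneda_{\Category C\Op} c,
	f_! \Yoneda_{\Category C\Op} c')
	\IsEquivalentTo
	\Map_{\Category C\Op}(c, c'),
\]
so that $f\Op$, and hence $f$, is fully faithful.

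The real work is essential surjectivity, and this is where idempotent completeness enters. Here I would invoke the characterisation of \emph{completely compact} objects of a presheaf $\infty$-category --- those $X$ for which $\Map(X, -)$ preserves all small colimits --- as precisely the retracts of representable objects \cite[\S\,5.1.6]{doi:10.1515/9781400830558}. Since $\Category C$, and therefore $\Category C\Op$, is idempotent complete, every retract of a representable is again representable, so the completely compact objects of $\Fun(\Category C, \Spaces)$ are exactly the essential image of $\Yoneda_{\Category C\Op}$, a copy of $\Category C\Op$; the same holds for $\Category D$. Complete compactness is a purely categorical property, preserved and reflected by any equivalence (an equivalence preserves small colimits and mapping spaces), so the equivalence $f_!$ restricts to an equivalence between the full subcategories of completely compact objects.

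Finally I would chase an object through. Given $d \in \Category D$, the corepresentable $\Yoneda_{\Category D\Op}(d)$ is completely compact, hence lies in the essential image of $f_!$ restricted to completely compact objects; thus $\Yoneda_{\Category D\Op}(d) \IsEquivalentTo f_!(F)$ with $F$ completely compact, so $F \IsEquivalentTo \Yoneda_{\Category C\Op}(c)$ for some $c \in \Category C$. The compatibility above gives $\Yoneda_{\Category D\Op}(d) \IsEquivalentTo \Yoneda_{\Category D\Op}(f(c))$, and full faithfulness of $\Yoneda_{\Category D\Op}$ forces $d \IsEquivalentTo f(c)$. Hence $f$ is essentially surjective, and together with the full faithfulness already obtained, $f$ is an equivalence. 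The main obstacle is exactly this last step: one must argue both that complete compactness is reflected by $f_!$ and that idempotent completeness upgrades ``retract of a representable'' to ``representable'', without which $\Fun(-, \Spaces)$ would only recover the idempotent completion rather than $\Category C$ and $\Category D$ themselves.
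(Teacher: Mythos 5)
Your proposal is correct and follows essentially the same route as the paper: pass to the left adjoint \( f_! \), use its compatibility with the Yoneda embeddings, and identify the completely compact objects of \( \Fun(\Category C, \Spaces) \) and \( \Fun(\Category D, \Spaces) \) with the representables via idempotent completeness \cite[5.1.6.8]{doi:10.1515/9781400830558}. The only difference is expository: you unwind the conclusion into separate full-faithfulness and essential-surjectivity arguments, whereas the paper directly observes that \( f_! \) restricts to an equivalence between the subcategories of completely compact objects and hence that \( f\Op \) is an equivalence.
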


\begin{proof}
	Let \( f_! \) denote the left adjoint to \( f^\ast \).
	The following square
	\[
		\begin{tikzcd}[ampersand replacement=\&]
			\Fun(\Category C, \Spaces)
			\arrow[r, "f_!"]
			\& \Fun(\Category D, \Spaces) \\
			\Category C\Op
			\arrow[r, "f\Op"]
			\ar[u, hook, "\Yoneda_{\Category C}"]
			\& \Category D\Op
			\ar[u, hook, "\Yoneda_{\Category D}"]
		\end{tikzcd}
	\]
	is commutative
	\cite[5.2.6.3]{doi:10.1515/9781400830558}.
	By assumption,
	\( f^\ast \) is an equivalence of ∞\=/categories
	and so is its left adjoint \( f_! \).
	Then, \( f_! \) induces an equivalence of ∞\=/categories
	between the respective full subcategories of
	completely compact objects.
	By idempotent completeness, these full subcategories
	are equivalent to the full subcategories of representable
	functors
	\cite[5.1.6.8]{doi:10.1515/9781400830558}.
	As a consequence \( f\Op \) and thus \( f \) are equivalences
	of ∞-categories.
\end{proof}

\begin{lemma}
	Let \( X \) be an \( A \)-stratified space,
	then \( \Exit A (X) \) is an idempotent complete ∞-category.
\end{lemma}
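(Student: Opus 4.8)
The plan is to show that every idempotent in $\Exit A (X)$ splits, by proving that each idempotent is confined to a single stratum, where splitting is automatic because strata contribute $\infty$-groupoids. I would begin by analysing which simplices of $\Exit A (X)$ have all of their vertices in a common stratum. A $p$-simplex of $\Exit A (X)$ is a stratified map $\sigma \From \GeometricRealisation{\Simplex p} \to X$ covering a poset map $\varphi \From \{0 < \dots < p\} \to A$. The $i$-th vertex of $\GeometricRealisation{\Simplex p}$ lies in $\GeometricRealisation{\Simplex p}_{\leq i}$ but not in $\GeometricRealisation{\Simplex p}_{\leq i-1}$, so $\sigma$ sends it into $X_{\varphi(i)}$. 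Consequently, if every vertex of $\sigma$ is sent into one fixed stratum $X_a$, then $\varphi$ is the constant map at $a$, the composite $\GeometricRealisation{\Simplex p} \to X \to A$ is constant at $a$, and $\sigma$ factors through $X_a = f^{-1}(a)$. Since the induced stratification on $X_a$ is trivial, such a $\sigma$ is nothing but a continuous map $\GeometricRealisation{\Simplex p} \to X_a$. In other words, the simplicial subset of $\Exit A (X)$ spanned by the simplices whose vertices all lie in $X_a$ is exactly $\Sing(X_a)$, a Kan complex, included as a full subcategory $\Sing(X_a) \subset \Exit A (X)$.

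Next I would exploit the shape of the walking idempotent. The $\infty$-category $\mathrm{Idem}$ classifying idempotents has a single object, so every simplex in the image of a functor $F \From \mathrm{Idem} \to \Exit A (X)$ has all of its vertices equal to the single object $F(\ast) = x$. If $x \in X_a$, the previous paragraph shows that $F$ factors through the full subcategory $\Sing(X_a) \subset \Exit A (X)$.

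It then remains to split idempotents inside $\Sing(X_a)$ and transport the splitting back. Since $\Sing(X_a)$ is a Kan complex, hence an $\infty$-groupoid, every one of its endomorphisms is an equivalence; an idempotent whose underlying endomorphism is an equivalence is equivalent to an identity and therefore splits, so $\Sing(X_a)$ is idempotent complete \cite[4.4.5]{doi:10.1515/9781400830558}. Thus $F \From \mathrm{Idem} \to \Sing(X_a)$ extends to a diagram $\mathrm{Idem}^+ \to \Sing(X_a)$ exhibiting a splitting. A splitting of an idempotent is an absolute retract diagram, simultaneously a limit and a colimit diagram, and is therefore preserved by every functor, in particular by the inclusion $\Sing(X_a) \subset \Exit A (X)$. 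Hence the original idempotent $F$ splits in $\Exit A (X)$, which is consequently idempotent complete.

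The only genuinely substantive point is the first one: correctly unwinding the definition of stratified map to see that same-stratum simplices factor through a single stratum, so that idempotents are confined to the $\infty$-groupoids $\Sing(X_a)$. Everything after that is formal, relying on the single-object nature of $\mathrm{Idem}$ and on the absoluteness of idempotent splittings.
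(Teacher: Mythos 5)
Your proof is correct and takes essentially the same route as the paper's much terser argument, which simply observes that an idempotent stratified path must stay in a single stratum, where every morphism is invertible, so the idempotent is trivial and splits. Your write-up fills in exactly the details the paper leaves implicit: the vertex argument confining the idempotent to the full sub-Kan-complex \( \Sing(X_a) \subset \Exit A (X) \), idempotent completeness of Kan complexes, and the absoluteness of idempotent splittings.
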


\begin{proof}
	An idempotent stratified path in \( X \) must stay in the same
	stratum, in which case it is invertible and thus trivial.
\end{proof}

\begin{theorem}
	Let \( \Lambda \) be a filtered poset,
	\( A \) be a \( \Lambda \)-stratified poset,
	\( Y \) be an \( A \)\=/stratified space
	and let \( f \From X \to Y \) be a continuous
	map.

	Assume that for every \( \lambda \in \Lambda \),
	both \( X_{\leq \lambda} \to A_{\leq \lambda} \)
	and \( Y_{\leq \lambda} \to A_{\leq \lambda} \)
	be \( \DD \)-spaces.
	If \( f \) is an \( A \)-stratified homotopy equivalence,
	then the functor
	\[
		\begin{tikzcd}[column sep = huge]
			\Exit A (X)
			\rar["\Exit A (f)"]
				& \Exit A (Y)
		\end{tikzcd}
	\]
	is an equivalence of ∞-categories.
\end{theorem}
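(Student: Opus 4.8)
The plan is to reduce to the case of $\DD$\=/spaces, where the representation theorem for $\DD$\=/spaces combines with Haine's homotopy invariance theorem, and then to glue along the filtered poset $\Lambda$. First I would restrict $f$. Being an $A$\=/stratified homotopy equivalence, $f$ is in particular a stratified map over $\id_A$, and so carries the preimage $X_{\leq\lambda}$ of $A_{\leq\lambda}$ into $Y_{\leq\lambda}$; likewise a stratified homotopy inverse $g$ restricts, and the stratified homotopies, being maps over $A$, restrict to maps $[0,1]\times X_{\leq\lambda}\to Y_{\leq\lambda}$. Hence for every $\lambda\in\Lambda$ the restriction $f_{\leq\lambda}\From X_{\leq\lambda}\to Y_{\leq\lambda}$ is an $A_{\leq\lambda}$\=/stratified homotopy equivalence between $\DD$\=/spaces. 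As $\DD$\=/spaces are conically stratified, both $X$ and $Y$ satisfy the hypotheses of the exit paths colimit decomposition \UnskipRef{Proposition: Exit of DDO-space}, giving
\[
	\Exit A (X)
	\IsCanonicallyIsomorphicTo
	\varinjlim_{\lambda\in\Lambda}
	\Exit {A_{\leq\lambda}}(X_{\leq\lambda})
	\quad\text{and}\quad
	\Exit A (Y)
	\IsCanonicallyIsomorphicTo
	\varinjlim_{\lambda\in\Lambda}
	\Exit {A_{\leq\lambda}}(Y_{\leq\lambda})
\]
in the $\infty$\=/category of $\infty$\=/categories. By functoriality of the exit paths construction, $\Exit A (f)$ is the colimit of the maps $\Exit {A_{\leq\lambda}}(f_{\leq\lambda})$, and since a colimit of a levelwise equivalence of $\Lambda$\=/diagrams is again an equivalence, it suffices to prove that each $\Exit {A_{\leq\lambda}}(f_{\leq\lambda})$ is an equivalence. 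This reduces the theorem to the case of an $A$\=/stratified homotopy equivalence between $\DD$\=/spaces.

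So assume $X$ and $Y$ are $\DD$\=/spaces. The exit paths $\infty$\=/categories $\Exit A (X)$ and $\Exit A (Y)$ are small and idempotent complete (the second of the two lemmas above), so by the first of those lemmas it is enough to show that
\[
	\Exit A (f)^\ast \From
	\Fun(\Exit A (Y), \Spaces)
	\longrightarrow
	\Fun(\Exit A (X), \Spaces)
\]
is an equivalence. The representation theorem for $\DD$\=/spaces \UnskipRef{Theorem: representation for D-spaces} identifies these functor categories with $\ConstructibleSheaves A (Y)$ and $\ConstructibleSheaves A (X)$, and by the functoriality of that equivalence due to Lurie \cite[A.10.16]{arXiv:0911.0018} the functor $\Exit A (f)^\ast$ corresponds to the pullback $f^\ast$ on constructible sheaves. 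Now on a $\DD$\=/space constructible sheaves coincide with hyperconstructible hypersheaves \UnskipRef{Proposition: D-spaces}; moreover $f^\ast$ preserves constructible sheaves, and a constructible sheaf on a $\DD$\=/space is a hypersheaf, so $f^\ast$ agrees with the hyperpullback $\Hyperpullback f$ on $\ConstructibleSheaves A (Y)$. Haine's homotopy invariance theorem \UnskipRef{theorem: homotopy invariance of hyperconstructibles} then shows $\Hyperpullback f$, hence $f^\ast$, hence $\Exit A (f)^\ast$, to be an equivalence, and the first lemma yields that $\Exit A (f)$ is an equivalence.

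The step I expect to require the most care is the double identification, under the representation equivalence, of $\Exit A (f)^\ast$ with $f^\ast$ and of $f^\ast$ with the hyperpullback $\Hyperpullback f$. Haine's theorem is a statement about hyperconstructible hypersheaves and the hyperpullback $\Hyperpullback f$, whereas the representation theorem naturally outputs the ordinary pullback $f^\ast$ of constructible sheaves; bridging the two relies precisely on the fact that on $\DD$\=/spaces these $\infty$\=/categories and the two functorialities coincide, together with the naturality statement \cite[A.10.16]{arXiv:0911.0018}. Once this identification is secured, the gluing along $\Lambda$ in the first paragraph is purely formal.
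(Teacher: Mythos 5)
Your proposal is correct and follows essentially the same route as the paper's own proof: restrict to the $\DD$\=/spaces $X_{\leq\lambda}$, $Y_{\leq\lambda}$, use the coincidence theorems together with Haine's homotopy invariance to see that pullback is an equivalence on constructible sheaves, transfer this to the functor categories via the functoriality of the representation theorem, conclude each $\Exit{A_{\leq\lambda}}(f_{\leq\lambda})$ is an equivalence by the idempotent-completeness lemmas, and glue along $\Lambda$ using the colimit decomposition of exit paths. The only difference is presentational (you perform the colimit reduction first, the paper does it last), and your explicit check that $f^\ast$ agrees with $\Hyperpullback f$ on constructible sheaves over a $\DD$\=/space is exactly the content the paper packs into its citations.
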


\begin{proof}
	Since \( f \) is an \( A \)-stratified homotopy equivalence,
	it induces an \( A_{\leq \lambda} \)\=/stratified
	homotopy equivalence \( f_{\leq \lambda}
	\From X_{\leq \lambda} \IsEquivalentTo Y_{\leq \lambda} \)
	for each \( \lambda \in \Lambda \).
	By homotopy invariance, the pullback functor
	\( (f_{\leq \lambda})^\ast
	\From \ConstructibleSheaves {A_{\leq \lambda}}(Y_{\leq \lambda})
	\to
	\ConstructibleSheaves {A_{\leq \lambda}}(X_{\leq \lambda}) \)
	is an equivalence of ∞\=/categories
	\TripleUnskipRef{Corollary: hyperconstructible = constructible}
	{Proposition: D-spaces}
	{theorem: homotopy invariance of hyperconstructibles}.
	As a consequence, the pullback functor
	\( \Exit {A_{\leq \lambda}}(f_{\leq \lambda})^\ast \) from
	\( \Fun(\Exit {A_{\leq \lambda}} (Y_{\leq \lambda}), \Spaces) \)
	to \( \Fun(\Exit {A_{\leq \lambda}}
	(X_{\leq \lambda}), \Spaces) \) is also
	an equivalence of ∞\=/categories
	\UnskipRef{Remark: functoriality of representation theorem}.
	Then by the previous lemmas,
	\( \Exit {A_{\leq \lambda}} (f_{\leq \lambda}) \)
	is an equivalence between the ∞-category
	\( \Exit {A_{\leq \lambda}} (X_{\leq \lambda}) \)
	and
	\( \Exit {A_{\leq \lambda}} (Y_{\leq \lambda}) \).
	Taking the colimit
	\UnskipRef{Proposition: Exit of DDO-space},
	we get that
	\( \Exit A (f) \) is an equivalence of ∞-categories.
\end{proof}

\section{Examples of application}

We shall give examples of applications of the representation theorem
for constructible hypersheaves in the case where the stratifying poset
does not satisfy the ascending chain condition.
Simplicial complexes and \( \CW \)-complexes are natural examples
of colimit \( \DDO \)-spaces.
We shall then describe a conical example of \( \DDO \)-space, the metric
exponential.

One common point of these examples is that each stratum is locally
homeomorphic to a locally convex topological vector space
and we shall need to know that such spaces are locally of singular
shape.

\begin{lemma}
	\label{Lemma: locally of singular shape}
	Let \( X \) be a topological space
	locally homeomorphic
	to a locally convex real topological vector space
	\( V \).
	Then \( X \) is locally of singular shape.
\end{lemma}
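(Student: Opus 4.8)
The plan is to reduce the statement to the single, geometrically transparent fact that a convex open subset of $V$ is contractible, and then to propagate this fact along a well-chosen basis and along the local homeomorphism to $V$. Recall that $X$ is locally of singular shape precisely when every open $U \subseteq X$ is of singular shape, i.e. the counit $\GeometricRealisation{\Sing(U)} \to U$ is a shape equivalence. Since every open subset of $X$ is again locally homeomorphic to $V$, and since being locally of singular shape is a property that is local on the space (if $X$ is covered by opens that are themselves locally of singular shape, then so is $X$, a fact I would take from Lurie), it is enough to prove that $V$ itself is locally of singular shape; the conclusion for $X$ then follows by covering $X$ with opens homeomorphic to opens of $V$.

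First I would exploit local convexity to produce a good basis. As $V$ is locally convex, the origin admits a neighbourhood basis of convex opens, and by translation so does every point; hence the convex open subsets of $V$ form a basis $\mathcal B$ for the topology of $V$. Crucially, $\mathcal B$ is closed under finite intersections, since an intersection of convex sets is convex, and the same basis, restricted below any fixed open, is available inside every open subset of $V$. Each nonempty $C \in \mathcal B$ is contractible: fixing $c_0 \in C$, the straight-line homotopy $(t,x) \mapsto (1-t)x + t c_0$ is continuous, the vector space operations being continuous, and contracts $C$ onto $c_0$ (empty intersections cause no trouble). A contractible space has trivial shape and weakly contractible singular complex, so for each $C \in \mathcal B$ both $\GeometricRealisation{\Sing(C)}$ and $C$ have trivial shape and the counit is a shape equivalence; that is, every element of $\mathcal B$ is of singular shape.

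It then remains to upgrade this basis-level information to all opens. Here I would invoke the criterion that a space equipped with a basis of opens of singular shape, closed under finite intersections, is locally of singular shape. Applying it to $V$, and noting that the same convex basis intersected below any open $U \subseteq V$ witnesses the hypothesis for $U$, one concludes that $V$ is locally of singular shape; the locality of the property then transports the conclusion to $X$.

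The main obstacle is precisely this last upgrade from a basis to all opens. Shape does not satisfy descent along arbitrary open covers, so the passage from \emph{the convex opens are of singular shape} to \emph{every open is of singular shape} is the substantive point; it rests on a nerve-theorem-type comparison between the colimit of the (contractible) singular complexes over the covering poset and the shape of the ambient topos. I expect the delicate part to be ensuring that this comparison is available for a general locally convex $V$, which need not be paracompact, rather than only in the metrizable (Fréchet) case; I would either appeal to Lurie's treatment of singular shape, which formulates the relevant descent internally to the ∞-topos of sheaves generated by the basis, or, if necessary, restrict to the paracompact instances actually needed in the applications.
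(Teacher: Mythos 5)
Your proposal is correct and follows essentially the same route as the paper: reduce to \( V \) by locality of the property (Lurie, A.4.16), cover each open \( U \subseteq V \) by its convex open subsets, which are stable under finite intersection and each contractible hence of singular shape (A.4.11), and conclude by Lurie's descent criterion for covers closed under finite intersections (A.4.14). Your closing worry about paracompactness is unnecessary, since that criterion carries no such hypothesis, which is exactly why the paper can cite it directly for a general locally convex \( V \).
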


\begin{proof}
	By assumption \( X \) is covered by spaces homeomorphic
	to \( V \), so it shall be enough to show that
	\( V \) is locally of singular shape
	\cite[A.4.16]{arXiv:0911.0018}.
	Let \( U \subset V \) be an open subset of \( V \) and
	let \( A \) be the set of all convex open subsets of \( U \).
	Since
	\( V \) is locally convex \( U = \cup_{C \in A} C \).
	Moreover for any finite \( B \subset A \), the intersection
	\( \cap_{C \in B} C \) is still convex.
	Lastly, any convex subset in a topological vector space
	is contractible and thus of singular shape
	\cite[A.4.11]{arXiv:0911.0018}.
	This is enough to show that \( U \) is also of singular
	shape
	\cite[A.4.14]{arXiv:0911.0018}.
\end{proof}

\subsection{Simplicial complexes}

Let \( X \) be a simplicial complex.
Its constitutive simplices naturally give \( X \) a structure
of stratified space \( X \to S \).

Lurie has shown that when \( X \) is locally finite, then
it is conically \( S \)\=/stratified
\cite[A.7.3]{arXiv:0911.0018}.
The poset of simplices \( S \) also satisfies the ascending
chain condition.
Moreover, \( \Exit S (X) \) is always an ∞-category
\cite[A.7.4]{arXiv:0911.0018}
and the canonical map
\( \Exit S (X) \to \Nerve(S) \) is an equivalence of
∞-categories
\cite[A.7.5]{arXiv:0911.0018}.

If \( X \) is a locally finite simplicial
complex,
it is then a \( \DD \)-space and the representation theorem
applies.
We shall extend this to the class of all locally countable
simplicial complexes.

\begin{definition}
	We shall say that a simplicial complex \( X \) is locally
	countable if each vertex \( \nu \in X \) belongs
	to at most a countable number of edges of \( X \).
\end{definition}

\begin{theorem}
	Let \( X \) be a locally countable
	simplicial complex with
	poset of simplices \( S \).
	One can represent
	\( S \)-constructible sheaves on \( X \)
	\[
		\Fun(\Nerve(S), \Spaces)
		\IsCanonicallyIsomorphicTo
		\ConstructibleSheaves S (X)
	\]
	as functors from \( \Nerve(S) \) to \( \Spaces \).
\end{theorem}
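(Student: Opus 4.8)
The plan is to present \( X \), with its weak topology, as a colimit \( \DDO \)\=/space and then apply the representation theorem in the colimit case \UnskipRef{Representation, colimit case}; since \( \Exit S (X) \IsCanonicallyIsomorphicTo \Nerve(S) \) for every simplicial complex \cite[A.7.5]{arXiv:0911.0018}, this yields the statement. The building blocks are already available, for by Lurie's result every \emph{locally finite}, finite-dimensional subcomplex is a \( \DD \)\=/space: it is conically \( S \)\=/stratified \cite[A.7.3]{arXiv:0911.0018}, it is paracompact, its strata are open simplices and hence locally of singular shape \UnskipRef{Lemma: locally of singular shape}, and its poset of simplices satisfies the ascending chain condition because a chain of faces has length bounded by the dimension. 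The entire problem is therefore to exhaust \( X \) by such subcomplexes along an \( \OrdinalOmega \)\=/stratification of \( S \).

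This is the step that uses local countability. First I would fix, for each vertex \( \nu \), an enumeration of the countably many simplices containing \( \nu \), and record for every simplex \( \sigma \) and every vertex \( \nu \in \sigma \) the position \( p_\nu(\sigma) \) of \( \sigma \) in that enumeration. Setting \( \tilde w(\sigma) \coloneqq \max\big(\dim \sigma, \max_{\nu \in \sigma} p_\nu(\sigma)\big) \) and then \( w(\sigma) \coloneqq \max_{\tau \leq \sigma} \tilde w(\tau) \) produces a function \( w \) on simplices which is monotone for the face relation, dominates the dimension, and is finite on every simplex. The sublevel sets \( S_{\leq n} \coloneqq \{\sigma : w(\sigma) \leq n\} \) are then downward closed and define an \( \OrdinalOmega \)\=/stratification of \( S \), and the associated subcomplexes \( X_{\leq n} \) are finite-dimensional and locally finite, the local finiteness being forced by the inclusion \( \{\sigma \ni \nu : w(\sigma) \leq n\} \subseteq \{\sigma \ni \nu : p_\nu(\sigma) \leq n\} \).

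Because \( w \) is finite everywhere one has \( \bigcup_n X_{\leq n} = X \), and because any finite subcomplex meets only finitely many simplices it sits inside some \( X_{\leq n} \); the tower \( (X_{\leq n}) \) is thus cofinal among finite subcomplexes, so the weak topology of \( X \) is the colimit topology \( \varinjlim_n X_{\leq n} \). Each \( X_{\leq n} \to S_{\leq n} \) being a \( \DD \)\=/space, and the colimit of a sequence of closed embeddings of paracompact spaces being paracompact \UnskipRef{Lemma: colimit endofunctor}, the space \( X \) is a colimit \( \DDO \)\=/space. The representation theorem in the colimit case \UnskipRef{Representation, colimit case} then gives \( \Fun(\Exit S (X), \Spaces) \IsCanonicallyIsomorphicTo \ConstructibleSheaves S (X) \), and precomposing with \( \Exit S (X) \IsCanonicallyIsomorphicTo \Nerve(S) \) \cite[A.7.5]{arXiv:0911.0018} finishes the proof.

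The delicate point is the construction of \( w \): one must knit the separate local enumerations into a single monotone function whose sublevel sets are at once finite-dimensional, locally finite, and downward closed. This is also the only place where local countability is genuinely needed rather than local finiteness, and it is essential: for the weak topology a vertex lying on infinitely many edges already fails to have a conical neighbourhood, since its open star carries the weak topology, which is strictly finer than the cone topology, so one cannot simply take the \( X_{\leq n} \) to be the skeleta of \( X \).
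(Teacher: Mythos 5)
Your proposal is correct and takes essentially the same route as the paper: both use local countability to exhaust \( X \) by downward-closed, locally finite subcomplexes \( X_{\leq n} \) (the paper via finite exhaustions \( \Edges^\nu_{\leq n} \) of the edge set at each vertex, you via a monotone weight function built from enumerations of the simplices at each vertex), verify that this exhibits \( X \to S \) as a colimit \( \DDO \)-space, and conclude by the colimit-case representation theorem. The only cosmetic differences are that your sublevel complexes are additionally finite-dimensional, whereas the paper gets the ascending chain condition directly from local finiteness, and that you make explicit the identification \( \Exit{S}(X) \IsCanonicallyIsomorphicTo \Nerve(S) \), which the paper delegates to the discussion preceding the theorem.
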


\begin{proof}
	For each vertex \( \nu \) let \( \Edges^\nu \) be the set
	of edges \( \epsilon \) for which \( \nu \in \epsilon \).
	By assumption \( \Edges^\nu \) is countable
	so one can find an exhaustion by finite subsets
	\[
		\Edges_{\leq 0}^\nu
		\subset \cdots \subset \Edges_{\leq n}^\nu
		\subset \cdots \subset \Edges^\nu
	\]
	for each vertex \( \nu \).
	For each \( n < \OrdinalOmega \),
	let \( X_{\leq n} \subset X \) denote
	the biggest simplicial subcomplex
	for which each edge \( \epsilon \in X_{\leq n} \) is such that
	\( \nu \in \epsilon \implies \epsilon \in \Edges_{\leq n}^\nu \).
	By construction the poset \( S_{\leq n} \)
	of simplices of \( X_{\leq n} \) is a downward closed
	subposet of \( S \) and \( X \to S \)
	is exhausted by the subcomplexes
	\( X_{\leq n} \to S_{\leq n} \) which are all \( \DD \)-spaces
	as discussed earlier.

	Every simplicial complex is paracompact and each
	simplex is locally of singular shape
	\UnskipRef{Lemma: locally of singular shape}.
	Thus \( X \to S \) is a colimit \( \DDO \)-space and
	the representation theorem applies
	\UnskipRef{Representation, colimit case}.
\end{proof}

\subsection{CW-complexes}

The case of \( \CW \)-complexes resembles the one of simplicial
complexes: the set of cells \( C \) of a \( \CW \)-complex
\( X \) can be made into a poset by letting
\( d \leq c \) whenever \( d \subset \overline c \).
However, this poset is generally odd with cells attached to other
cells but isolated in the poset.
It is then natural to restrict one's attention to normal
\( \CW \)-complexes.

\begin{definition}
	A \( \CW \)-complex \( X \) with set of cells
	\( C \) is said to be normal if for every \( c \in C \),
	its boundary is a again a union of cells of \( X \).

	We shall say that a normal \( \CW \)-complex
	\( X \) is locally finite whenever \( C_{\nu <} \) is
	finite for every \( 0 \)-cell \( \nu \), and
	we shall say that \( X \) is locally countable if instead
	\( C_{\nu <} \) is countable.
\end{definition}

The last remaining obstacle is the conicality of the stratification as
locally finite normal \( \CW \)-complexes
might not be conically stratified.
Tamaki has introduced the notion of cylindrically normal
\( \CW \)-complex
\cite[4.1]{doi:10.1142/9789813226579_0006}
and with Tanaka, they have shown that for finite cylindrically
normal \( \CW \)-complexes, the cell stratification is
conical
\cite[1.7]{doi:10.1007/978-981-13-5742-8_15}.
Since conicality is a local condition, this immediately extends
to locally finite normal \( \CW \)-complexes.
Regular \( \CW \)\=/complexes and, real or complex  projective spaces are
examples of cylindrically normal \( \CW \)-complexes
\cite[4.3]{doi:10.1142/9789813226579_0006}.

\begin{theorem}
	Let \( X \) be a locally countable cylindrically normal
	\( \CW \)-complex with poset of cells \( C \).
	Then \( \Exit C (X) \) is an ∞-category
	\[
		\Fun(\Exit C (X), \Spaces)
		\IsCanonicallyIsomorphicTo
		\ConstructibleSheaves C (X)
	\]
	representing \( C \)-constructible sheaves on \( X \).
\end{theorem}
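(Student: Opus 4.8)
The plan is to reproduce the strategy of the locally countable simplicial complex theorem: I would exhibit \( X \to C \) as a colimit \( \DDO \)-space and then invoke the representation theorem in the colimit case \UnskipRef{Representation, colimit case}. The entire content lies in producing an exhausting filtration of \( X \) by locally finite subcomplexes. For each \( 0 \)-cell \( \nu \), the set \( C_{\nu <} \) is countable by hypothesis, hence so is \( C_{\nu \leq} = C_{\nu <} \cup \{\nu\} \); I would enumerate it and let \( C^\nu_{\leq n} \) be the downward closure, taken inside \( C_{\nu \leq} \), of its first \( n \) elements. These sets are finite because the closure of any cell of a normal \( \CW \)-complex is a finite subcomplex, so each downward closure \( \{ d : \nu \leq d \leq c \} \) is finite. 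I then set
\[
	C_{\leq n} \coloneqq \{\, c \in C : c \in C^\nu_{\leq n}
	\text{ for every } 0\text{-cell } \nu \leq c \,\}
\]
and take \( X_{\leq n} \subset X \) to be the subcomplex on these cells.

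The first thing to check is that \( C_{\leq n} \) is downward closed, so that \( X_{\leq n} \) really is a closed subcomplex: if \( c \in C_{\leq n} \) and \( d \leq c \), then every \( 0 \)-cell \( \mu \leq d \) also satisfies \( \mu \leq c \), whence \( c \in C^\mu_{\leq n} \), and since \( C^\mu_{\leq n} \) was chosen downward closed inside the star of \( \mu \), the face \( d \) lies in \( C^\mu_{\leq n} \) too. Each vertex \( \nu \) then meets only the finitely many cells of \( C^\nu_{\leq n} \), so \( X_{\leq n} \) is a locally finite cylindrically normal \( \CW \)-complex and is therefore conically \( C_{\leq n} \)-stratified, by the Tamaki--Tanaka theorem together with the locality of conicality recalled above. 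Its poset \( C_{\leq n} \) satisfies the ascending chain condition, since any chain \( c_0 < c_1 < \cdots \) lies, after fixing a \( 0 \)-cell \( \nu \leq c_0 \) in the (finite) closure of \( c_0 \), inside the finite set \( C^\nu_{\leq n} \). Finally, each open cell is homeomorphic to some \( \Reals^k \) and hence locally of singular shape \UnskipRef{Lemma: locally of singular shape}, and \( \CW \)-complexes are paracompact, so each \( X_{\leq n} \to C_{\leq n} \) is a \( \DD \)-space.

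Since the weak topology of a \( \CW \)-complex is the colimit of its finite subcomplexes and every cell lies in some \( C_{\leq n} \), the filtration is exhausting and \( X \IsCanonicallyIsomorphicTo \varinjlim_{n < \OrdinalOmega} X_{\leq n} \); assigning to a cell the least \( n \) with \( c \in C_{\leq n} \) gives an order map \( C \to \OrdinalOmega \) exhibiting \( C \) as \( \OrdinalOmega \)-stratified. Thus \( X \to C \to \OrdinalOmega \) is a colimit \( \DDO \)-space, \( \Exit C (X) \) is an ∞-category \UnskipRef{Proposition: Exit of DDO-space}, and the colimit representation theorem \UnskipRef{Representation, colimit case} delivers the asserted equivalence. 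The genuinely new ingredient over the simplicial case—and the step I expect to need the most care—is arranging the local exhaustions \( C^\nu_{\leq n} \) to be simultaneously finite and downward closed: in the simplicial argument the constraint is imposed on edges and is automatically inherited by faces, whereas for cells one must invoke closure-finiteness of the normal \( \CW \)-complex to keep the downward closures finite.
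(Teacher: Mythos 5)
Your proof is correct and takes essentially the same route as the paper's: filter \( X \) by locally finite cylindrically normal subcomplexes \( X_{\leq n} \) built from finite exhaustions of the countable stars of the \( 0 \)-cells, check that each \( X_{\leq n} \to C_{\leq n} \) is a \( \DD \)-space, and invoke the colimit-case representation theorem. The only divergence is bookkeeping: where you enforce finiteness and downward closure of the local exhaustions by hand (using closure-finiteness), the paper takes arbitrary finite exhaustions of \( C_{\nu <} \) and defines \( X_{\leq n} \) as the biggest subcomplex whose cells satisfy the star condition, which absorbs the downward-closure verification automatically.
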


\begin{proof}
	Every \( \CW \)-complex is paracompact and each cell
	being homeomorphic to a locally convex topological vector
	space, it is locally of singular shape
	\UnskipRef{Lemma: locally of singular shape}.
	By assumption if \( X \) is locally countable, for every
	\( 0 \)-cell \( \nu \), one can find an exhaustion of
	\( C_{\nu <} \)
	\[
		C_{\nu <}^0 \subset
		\cdots
		C_{\nu <}^n \subset
		\cdots C_{\nu <}
	\]
	by finite subsets.
	Let \( X_{\leq n} \subset X \) denote the biggest subcomplex
	such that for each cell \( e \in X_{\leq n} \), if
	\( \nu \leq e \), then \( e \in C^n_{\nu <} \).
	By construction \( X_{\leq n} \) is locally finite
	and cylindrically normal, so it is conically stratified
	by the discussion above.
	In addition, because the border of every cell \( e \in X \)
	is a finite union of cells of lower dimension, a direct
	induction shows that each cell
	\( e \) belongs to one of the \( X_{\leq n} \).
	It follows that \( X \) is a colimit \( \DDO \)-space and
	the representation theorem applies
	\UnskipRef{Representation, colimit case}.
\end{proof}

\subsection{The exponentials}
\label{Section: exponentials}

Given a set \( X \), the exponential on \( X \) is the set
\( \Exp(X) \) of all finite subset \( S \subset X \).
If \( (X, d) \) is a metric space, the formula
\[
	D(S, T) \coloneqq
	\max
		\begin{cases}
			\max_{s \in S} \min_{t \in T} d(s,t)
			\\
			\max_{t \in T} \min_{s \in S} d(s, t)
		\end{cases}
\]
canonically endows the exponential with a generalised metric.
That is, a metric space in which two points
can be at infinite distance from one another. Here
this is the case of the empty configuration:
\( D(\emptyset, S) = + ∞ \) if \( S \) is not empty.
The set \( \Exp(X) \) when endowed with
its metric \( D \) is the metric exponential \( \MetricExp(X) \).
The metric exponential is canonically stratified over the poset
\( \PointedOmega = \{0\} \amalg \{1 < 2 < \cdots \} \),
with a finite subset \( S \subset X \) being sent to
its cardinality.
Replacing the metric topology of \( \MetricExp(X) \) with the colimit
one arising from this stratification, one obtains
\[
	\textstyle
	\TopExp(X) \coloneqq \varinjlim_{n \in \PointedOmega}
	\MetricExp^{\leq n}(X)
\]
the topological exponential.
When \( M \) is a Fréchet manifold, one can show that
the metric exponential is conically stratified
\cite{Anna}.
It follows that \( \MetricExp(M) \) is a conical \( \DDO \)-space
and \( \TopExp(M) \) is its associated colimit \( \DDO \)-space.
In particular
\[
	\ConstructibleHypersheaves {\PointedOmega}
	\left(\MetricExp(M)\right)
	\IsCanonicallyIsomorphicTo
	\ConstructibleSheaves {\PointedOmega} \left(\TopExp(M)\right)
	\IsCanonicallyIsomorphicTo \Fun(\Exit {\PointedOmega}
	(\MetricExp(X)), \Spaces)
\]
both have the same ∞-category of constructible hypersheaves
and these are representable using exit paths.

Constructible cosheaves on the exponential form a key ingredient in
the theory of locally constant factorisation algebras.
Constructible hypersheaves on the metric exponential are
for example
used by Lurie, to bridge locally constant factorisation algebras on
a finite dimensional manifold \( M \) with the theory of
\( \mathsf{E}_M \)-algebras
\cite[3.6.10]{arXiv:0911.0018}.

\subsection*{Acknowledgements}

The author would like to thank Anna Cepek
for introducing him to the exit path ∞-category and Lurie's
representation theorem of constructible sheaves.

\bibliography{ms.bbl}

\providecommand{\href}[2]{#2}\begingroup\raggedright\begin{thebibliography}{10}

\bibitem{arXiv:0911.0018}
Jacob Lurie, `DAG VI: $E_k$-algebras', Oct., 2009.
\newblock \href{http://arxiv.org/abs/0911.0018}{{\color{arXiv}\ttfamily
  arXiv:0911.0018 [math.AT]}}.

\bibitem{doi:10.1112/s0010437x09004229}
David Treumann, `Exit paths and constructible stacks',
  \href{http://dx.doi.org/10.1112/s0010437x09004229}{{\em Compositio
  Mathematica} {\bfseries 145} no.~6, (Sept., 2009) 1504--1532},
  \href{http://arxiv.org/abs/0708.0659}{{\color{arXiv}\ttfamily arXiv:0708.0659
  [math.AT]}}.

\bibitem{arXiv:1910.11980}
Anna Cepek, `Configuration spaces of {\( \mathbf R^n \)} by way of exit-path
  ∞-categories', 2019.
\newblock \href{http://arxiv.org/abs/1910.11980}{{\color{arXiv}\ttfamily
  arXiv:1910.11980 [math.AT]}}.

\bibitem{arXiv:1807.03281}
Clark Barwick, Saul Glasman, and Peter Haine, `Exodromy', 2020.
\newblock \href{http://arxiv.org/abs/1807.03281}{{\color{arXiv}\ttfamily
  arXiv:1807.03281 [math.AT]}}.

\bibitem{arXiv:2010.06473}
Peter~J. Haine, `The homotopy-invariance of constructible sheaves of spaces',
  2020.
\newblock \href{http://arxiv.org/abs/2010.06473}{{\color{arXiv}\ttfamily
  arXiv:2010.06473 [math.AT]}}.

\bibitem{doi:10.1515/9781400830558}
Jacob Lurie, \href{http://dx.doi.org/10.1515/9781400830558}{{\em Higher Topos
  Theory}}, vol.~170 of {\em Annals of Mathematics Studies}.
\newblock Princeton University Press, Princeton, NJ, July, 2009.
\newblock \href{http://arxiv.org/abs/math/0608040}{{\color{arXiv}\ttfamily
  arXiv:math/0608040 [math.CT]}}.

\bibitem{doi:10.1016/j.aim.2016.11.032}
David Ayala, John Francis, and Hiro~Lee Tanaka, `Local structures on stratified
  spaces', \href{http://dx.doi.org/10.1016/j.aim.2016.11.032}{{\em Advances in
  Mathematics} {\bfseries 307} (Feb., 2017) 903--1028},
  \href{http://arxiv.org/abs/1409.0501}{{\color{arXiv}\ttfamily arXiv:1409.0501
  [math.AT]}}.

\bibitem{isbn:9780821843611}
Mark Hovey, {\em Model categories}.
\newblock No.~63. American Mathematical Society, 2007.

\bibitem{doi:10.2307/1969615}
Ernest Michael, `Continuous Selections. I',
  \href{http://dx.doi.org/10.2307/1969615}{{\em The Annals of Mathematics}
  {\bfseries 63} no.~2, (Mar., 1956) 361}.

\bibitem{doi:10.1006/aima.2001.2015}
Daniel Dugger, `Combinatorial Model Categories Have Presentations',
  \href{http://dx.doi.org/10.1006/aima.2001.2015}{{\em Advances in Mathematics}
  {\bfseries 164} no.~1, (Dec., 2001) 177--201},
  \href{http://arxiv.org/abs/math/0007068}{{\color{arXiv}\ttfamily
  arXiv:math/0007068 [math.AT]}}.

\bibitem{doi:10.1017/cbo9781107261457}
Emily Riehl, \href{http://dx.doi.org/10.1017/cbo9781107261457}{{\em Categorical
  Homotopy Theory}}.
\newblock Cambridge University Press, 2009.

\bibitem{Joyal}
André Joyal, `The theory of quasi-categories and its applications', 2008.
\newblock Lecture notes.

\bibitem{doi:10.1016/j.aim.2018.05.031}
David Ayala, John Francis, and Nick Rozenblyum, `Factorization homology I:
  Higher categories', \href{http://dx.doi.org/10.1016/j.aim.2018.05.031}{{\em
  Advances in Mathematics} {\bfseries 333} (July, 2018) 1042--1177},
  \href{http://arxiv.org/abs/1504.04007}{{\color{arXiv}\ttfamily
  arXiv:1504.04007 [math.AT]}}.

\bibitem{doi:10.4171/jems/856}
David Ayala, John Francis, and Nick Rozenblyum, `A stratified homotopy
  hypothesis', \href{http://dx.doi.org/10.4171/jems/856}{{\em Journal of the
  European Mathematical Society} {\bfseries 21} no.~4, (Dec., 2018)
  1071--1178}, \href{http://arxiv.org/abs/1502.01713}{{\color{arXiv}\ttfamily
  arXiv:1502.01713 [math.AT]}}.

\bibitem{doi:10.1142/9789813226579_0006}
Dai Tamaki, \href{http://dx.doi.org/10.1142/9789813226579_0006}{`Cellular
  stratified spaces',} in {\em Lecture Notes Series, Institute for Mathematical
  Sciences, National University of Singapore}, pp.~305--435.
\newblock World Scientific, Oct., 2017.
\newblock \href{http://arxiv.org/abs/1609.04500}{{\color{arXiv}\ttfamily
  arXiv:1609.04500 [math.AT]}}.

\bibitem{doi:10.1007/978-981-13-5742-8_15}
Dai Tamaki and Hiro~Lee Tanaka,
  \href{http://dx.doi.org/10.1007/978-981-13-5742-8_15}{`Stellar
  Stratifications on Classifying Spaces',} in {\em Trends in Mathematics},
  pp.~287--313.
\newblock Springer Singapore, 2019.
\newblock \href{http://arxiv.org/abs/1804.11274}{{\color{arXiv}\ttfamily
  arXiv:1804.11274 [math.AT]}}.

\bibitem{Anna}
Anna Cepek and Damien Lejay, `On the topologies of the exponential'.
\newblock In preparation.

\end{thebibliography}\endgroup

\end{document}